\documentclass[11pt]{amsart}

\usepackage{scitpl_ams}

\newcommand{\ds}{\displaystyle}
\newcommand{\xnorm}[2][2]{\LebNorm[#1][][\mbT]{#2}}
\newcommand{\xdual}[2]{\LebDual[2][][\mbT]{#1}{#2}}
\newcommand{\xhnorm}[2][2]{\LebNorm[#1][][\mbT]{#2}}
\newcommand{\xhdual}[2]{\LebDual[2][][\mbT]{#1}{#2}}
\newcommand{\xKnorm}[2][K]{\LebNorm[2][][#1]{#2}}
\newcommand{\xKdual}[3][K]{\LebDual[2][][#1]{#2}{#3}}
\newcommand{\xdhseminorm}[1]{\seminorm{#1}_{\mathrm{DG}}}
\newcommand{\xjhseminorm}[1]{\seminorm{#1}_{\mathrm{jmp}}}

\newcommand{\xahbimap}[2]{a_h(#1,#2)}
\newcommand{\xahadjbimap}[2]{a_h^*(#1,#2)}
\newcommand{\xbhbimap}[2]{b_h(#1,#2)}
\newcommand{\xbhadjbimap}[2]{b_h^*(#1,#2)}

\begin{document}


\title[Discrete hypocoercive estimates for discontinuous Galerkin
  methods]{Discrete hypocoercive estimates for discontinuous Galerkin
  methods:  application to the Vlasov-Poisson-Fokker-Planck system}

\author{Yi Cai}
\address{Yi Cai, School of Mathematical Sciences, Xiamen University,
Xiamen, Fujian 361005, PR China}
\email{yicaim@stu.xmu.edu.cn}

\author{Alain Blaustein}
\address{Alain Blaustein, Centre Inria de l'Université de Lille,
Villeneuve-d'Ascq, F-59650, France}
\email{alain.blaustein@inria.fr}

\author{Tao Xiong}
\address{Tao Xiong, School of Mathematical Sciences,
University of Science and Technology of China,
Hefei, Anhui, 230026, PR China}
\email{taoxiong@ustc.edu.cn}

\author{Francis Filbet}
\address{Francis Filbet, Institut de Mathématiques de Toulouse,
Université de Toulouse, 118, route de Narbonne, 
Toulouse F-31062, France}
\email{francis.filbet@math.univ-toulouse.fr}

\thanks{The first and third authors were partially supported by the National Natural Science Foundation of China (Grant No.~12571443) and by the Natural Science Foundation of Fujian Province (Grant No.~2023J02003).}

\thanks{The fourth author is the corresponding author.}

\subjclass[2010]{
Primary: 
    82C40, 
Secondary: 
    65M60, 
    65M70  
}

\date{}

\dedicatory{}

\keywords{Hermite spectral method; 
Discontinuous Galerkin method; 
Vlasov-Poisson-Fokker-Planck; 
Hypocoercive estimates.
}

\begin{abstract}
We develop and analyze a class of structure-preserving discontinuous Galerkin schemes for the nonlinear Vlasov-Poisson-Fokker-Planck model, reformulated as a hyperbolic system through a Hermite expansion in the velocity variable. We discretize the Vlasov-Fokker-Planck equation with the discontinuous Galerkin method, while the Poisson equation is approximated with either a discontinuous Galerkin method or a Raviart-Thomas mixed finite element method. We prove the exponential relaxation to equilibrium for suitable initial data, uniformly with respect to the discretization parameters thanks to discrete hypocoercivity arguments. Moreover, we check that the resulting semi-discrete schemes preserve the physical invariants along with the $L^2$ variational structure of the linearized model. Numerical simulations verify the accuracy and the long-time behavior of the scheme.
\end{abstract}

\maketitle

\tableofcontents


\section{Introduction}
\label{sec:introduction}
The Vlasov-Poisson-Fokker-Planck model \cite{bouchut_long_1995} provides a kinetic description of the evolution of electrons interacting with a background of heavy, immobile positive ions through a self-consistent electrostatic field. In a $d$-dimensional periodic box $\mbT^d$, the model reads
\begin{equation} \label{eq:vpfp}
    \left\{
    \begin{array}{l}
        \ds \partial_t f \,+\, \mfv \cdot \nabla_\mfx f \,+\, \frac{q_e}{m_e} \,\mfE \cdot \nabla_\mfv f \,=\, \frac{1}{\tau_0} \,\nabla_\mfv \cdot (\mfv f \,+\, T_0\,\nabla_\mfv f), 
        \\ [0.9em]
        \ds \mfE \,=\, -\nabla_\mfx \Phi \qc -\varepsilon_0\, \Delta_\mfx \Phi \,=\, \rho \,-\, \rho_0 \qc \rho \,=\, q_e\,\int_{\mbR^d} f \,\dd{\mfv}\,, 
    \end{array}
    \right.
\end{equation}
where $f(t,\mfx,\mfv)$ denotes the electron distribution function in phase space $(\mfx,\mfv)\in\mbT^d\times\mbR^d$ at time $t\ge 0$, whereas $\mfE(t,\mfx)$ is generated by the electrostatic potential $\Phi(t,\mfx)$ solution to the Poisson equation. This coupling involves several physical parameters, including the vacuum permittivity $\varepsilon_0$, the elementary charge $q_e$, and the electron mass $m_e$. The electron charge density $\rho(t,\mfx)$ is defined as the velocity integral of $f$ multiplied by the elementary charge, while $\rho_0(\mfx)$ denotes the prescribed ion charge density. Thermodynamic effects are modeled by the Fokker-Planck operator on the right-hand side of the kinetic equation, which accounts for collisions between electrons and a fixed ionic background at a spatially homogeneous temperature $T_0>0$. The parameter $\tau_0>0$ denotes the associated mean free time. The Poisson equation is supplemented with the quasi-neutrality condition 
\begin{equation} \label{eq:vpfp_quasineutrality}
    \int_{\mbT^d} \rho(t,\mfx)\;\dd{\mfx} \,=\, \int_{\mbT^d} \rho_0(\mfx)\,\dd{\mfx} \qc \forall \,t \,\ge\, 0,
\end{equation}
and the uniqueness condition for the electric potential
\begin{equation} \label{eq:vpfp_uniqueness}
    \int_{\mbT^d} \Phi(t,\mfx)\,\dd{\mfx} \,=\, 0 \qc \forall\, t \,\ge\, 0.
\end{equation}
In the long-time regime $t\rightarrow +\infty$, the distribution function $f(t,\mfx,\mfv)$ converges toward a stationary state $f_\infty(\mfx,\mfv)$, defined by
\begin{equation*} 
    f_\infty(\mfx,\mfv) = \rho_\infty(\mfx)\mathcal{M}(\mfv),
\end{equation*}
where $\mathcal{M}(\mfv)$ is the Maxwellian with temperature $T_0$,
\begin{equation} 
\label{def:M}
    \mathcal{M}(\mfv) \,=\, \ds\frac{1}{(2\pi T_0)^{\frac{d}{2}}}\,\exp\left(-\frac{\abs{\mfv}^2}{2\,T_0}\right),
\end{equation}
and where the macroscopic density $\rho_\infty(\mfx)$ is determined by the Poisson-Boltzmann equation
\begin{equation*} 
    \rho_\infty = c_\infty\exp\left(-\frac{q_e}{m_eT_0}\Phi_\infty\right) \qc -\varepsilon_0\Delta_{\mfx}\Phi_\infty = \rho_\infty - \rho_0,
\end{equation*}
together with the uniqueness condition \eqref{eq:vpfp_uniqueness} for $\Phi_\infty(\mfx)$ instead. 
The normalization constant $c_\infty$ is uniquely fixed by the quasi-neutrality condition
\begin{equation*}
    c_\infty \int_{\mbT^d}\exp\left(-\frac{q_e}{m_eT_0}\Phi_\infty(\mfx)\right) \dd{\mfx} = \int_{\mbT^d}\rho_0(\mfx)\dd{\mfx}. 
\end{equation*} 
The above asymptotic behavior means that the electron's temperature relaxes to the background temperature and their distribution converges to a Maxwell-Boltzmann distribution. 

Numerical simulation of the system \eqref{eq:vpfp} faces the typical challenges of kinetic equations, namely multiple scales and the high dimensionality of the phase space. A variety of numerical methods have been developed for this system; see, for instance, \cite{havlak_deterministic_1998,wollman_numerical_2005,ayuso_discontinuous_2011, crestetto_kinetic_2012,degond_asymptoticpreserving_2017,manzini_convergence_2017, carrillo_variational_2021,ye_energyconserving_2024}. These approaches aim to capture physical phenomena arising in weakly collisional plasmas, such as Landau damping and the two-stream instability, which typically occur over short time intervals before being canceled by collisional effects. To address high dimensionality, dynamical low-rank algorithms have been proposed \cite{camporeale_velocity_2016, einkemmer_mass_2021,coughlin_efficient_2022,guo_low_2022} to reduce computational cost via dimension splitting and singular value decomposition.

Recently, Blaustein and Filbet proposed a finite volume scheme based on Hermite polynomials in the velocity variable for the nonlinear Vlasov-Poisson-Fokker-Planck model \cite{blaustein_discrete_2024, blaustein_structure_2024,blaustein_structure_2025}. Using hypocoercivity arguments, they established exponential relaxation to equilibrium in a weighted $ L^2$-functional framework for the linearized model. In the collisionless setting, discontinuous Galerkin methods combined with Hermite decompositions have been successfully applied to the Vlasov-Poisson system \cite{kormann_generalized_2021, filbet_conservative_2022,bessemoulin-chatard_stability_2022,bessemoulin-chatard_convergence_2023}. These methods naturally conserve mass, and can be extended to conserve momentum and energy. In addition, discontinuous Galerkin discretizations offer several practical advantages, including high-order accuracy, compactness, $h$-$p$ adaptivity, parallel efficiency, and flexibility in handling complex geometries \cite{biswas_parallel_1994,shu_discontinuous_2009,cangiani_hpversion_2017}. 

The purpose of our investigation is to develop a discontinuous Galerkin framework that captures the correct long-time behavior of the nonlinear Vlasov-Poisson-Fokker-Planck model in both weakly and strongly collisional regimes. From now on, we focus on the one-dimensional model with a normalized homogeneous ionic background, for which $\rho_\infty = \rho_0 $ is a positive constant and $\Phi_\infty = 0$. For simplicity, we rescale the variables so that the physical parameters $q_e$, $m_e$, and $\varepsilon_0$ are absorbed into the scaling. Under this normalization, the system \eqref{eq:vpfp} reduces to
\begin{equation*} 
    \left\{
    \begin{array}{l}
       \ds \partial_t f \,+\, v\,\partial_x f \,+\,  E\,\partial_v f \,=\, \frac{1}{\tau_0}\,\partial_v\left(v\,f \,+\, T_0\,\partial_vf\right),
       \\[0.9em]
       \ds E \,=\, -\partial_x\Phi \qc -\partial_{xx}\Phi \,=\, \rho \,-\, \rho_{\infty} \qc \rho \,=\, \int_{\mbR} f\,\dd{v}.
    \end{array}
    \right.
\end{equation*}
Since we are interested in the large time behavior of the solutions near $f_\infty$, we decompose the nonlinear field interaction $E\partial_vf$ into a linearized component $E\partial_vf_\infty$ and a nonlinear remainder $E\partial_v(f-f_\infty)$, leading to
\begin{equation} \label{eq:vpfp1d}
	\left\{
	\begin{array}{l}
		\ds \partial_t f \,+\, v\,\partial_x f \,+\, E\,\partial_vf_\infty \,+\,  E\,\partial_v(f-f_\infty) \,=\, \frac{1}{\tau_0}\,\partial_v\left(v\,f \,+\, T_0\,\partial_vf\right),
		\\[0.9em]
		\ds E \,=\, -\partial_x\Phi \qc -\partial_{xx}\Phi \,=\, \rho \,-\, \rho_{\infty} \qc \rho \,=\, \int_{\mbR} f\,\dd{v}.
	\end{array}
	\right.
\end{equation}
Let us observe that after removing the nonlinear component, the resulting system corresponds to the linearized Vlasov-Poisson-Fokker-Planck model. It has been analyzed at the discrete level in \cite{blaustein_structure_2024} within a finite volume framework for the spatial variable and a spectral Hermite discretization for the velocity variable. The main analytical tool to prove the large time convergence of the discrete approximations at the linearized level are the so called discrete hypocoercivity methods, introduced in \cite{porretta_numerical_2017,bessemoulin-chatard_hypocoercivity_2020, dujardin_coercivity_2020, georgoulis_hypocoercivitycompatible_2021, blaustein_discrete_2024, blaustein_structure_2024, blaustein_structure_2025}, which themselves build on the continuous theory of hypocoercivity developed in \cite{villani_hypocoercivity_2009,dolbeault_hypocoercivity_2015}. In this article, we extend this strategy in the discontinuous Galerkin framework and for the approximations of the fully nonlinear system \eqref{eq:vpfp1d}.
The main additional difficulty is that the linear variational structure on which relies hypocoercivity methods no longer holds at the nonlinear level. Therefore, to extend discrete hypocoercivity methods at the nonlinear level, we are led to control the additional nonlinear terms in the discontinuous Galerkin framework.

\vspace{1em}

The remainder of the paper is organized as follows. In Section~\ref{sec:disc_vfp}, we reformulate the nonlinear Vlasov-Fokker-Planck equation using a Hermite decomposition for the velocity variable and introduce a spatial discretization  using a local discontinuous Galerkin method. Then, we focus on the approximation of the Poisson equation in Section \ref{sec:disc_poisson}, we propose both discontinuous Galerkin and finite element methods. In Section \ref{sec:main_results} we present the main results of the combined scheme and their proofs subsequently. Finally, numerical simulations are presented in Section~\ref{sec:simulation}, and concluding remarks follow in Section~\ref{sec:conclusion}.


\section{Discretization of Vlasov-Fokker-Planck equation}
\label{sec:disc_vfp}

In this section, we focus on the discretization of the kinetic Vlasov-Fokker-Planck equation.  We first apply a spectral method for the velocity variable based on Hermite expansion. Then, we apply a local discontinuous Galerkin method based on alternating fluxes. This approach preserves the structure of the continuous equation, allowing to provide discrete energy and hypocoercive estimates. 


\subsection{Hermite decomposition for the velocity variable}
\label{sec:velocity}
The purpose of this section is to present a reformulation of the Vlasov-Poisson-Fokker-Planck model based on a spectral decomposition as in \cite{blaustein_discrete_2024, blaustein_structure_2024,blaustein_structure_2025}. More precisely, we expand the distribution function $f$ as
\begin{equation} \label{eq:hermite_decomposition}
    f(t,x,v) \,=\, \sum_{k\in\mbN} D_k(t,x)\,e_k(v),
\end{equation}
where $(e_k)_{k\in\mbN}$ is the orthonormal basis of $L^2(\mathcal{M}^{-1}\dd{v})$ defined by 
\begin{equation*} 
    e_k(v) \,=\, H_k\left(\frac{v}{\sqrt{T_0}}\right)\,\mathcal{M}(v),
\end{equation*}
and where $\mathcal{M}$ is given in \eqref{def:M} and $(H_k)_{k\in\mbN}$ denotes the family of Hermite polynomials orthogonal with respect to the Gaussian weight. They satisfy the recurrence relation 
\begin{equation*} 
	\xi \,H_k(\xi) \,=\, \sqrt{k}\,H_{k-1}(\xi) \,+\, \sqrt{k+1}\,H_{k+1}(\xi) \qc \forall\; k \,\in\, \mbN\,,
\end{equation*}
with $H_{-1} = 0$ and $H_{0} = 1$. It is also worth noting that $(e_k)_{k\in\mbN}$ diagonalizes the Fokker-Planck operator:
\begin{equation*}
    \partial_v(v\,e_k \,+\, T_0\,\partial_v e_k) \,=\, -k\,e_k \qc \forall \,k\,\in\,\mbN,
\end{equation*}
which follows from the recurrence relation and the differential relation of Hermite polynomials,
\begin{equation*} 
    H_k'(\xi) \,=\, \sqrt{k}\,H_{k-1}(\xi) \qc \forall k\in\mbN.
\end{equation*}
We check that the coefficients $D_\infty= (D_{\infty,k})_{k\in\mbN}$ in the expansion \eqref{eq:hermite_decomposition} of the equilibrium $f_\infty$ are given by
\begin{equation} \label{eq:vpfp1d_hermite_equilibrium}
    D_{\infty,k} = \begin{cases}
        \ds \rho_\infty, & \text{if } k = 0; \\
        \ds 0,  &\text{else.}
    \end{cases}
\end{equation}
Inserting \eqref{eq:hermite_decomposition} into \eqref{eq:vpfp1d}, we obtain the following system for $(D_k)_{k\in\mbN}$,
\begin{equation*} 
    \left\{
    \begin{array}{l}
         \ds \partial_t D_0 \,-\, \mathcal{A}^* \,D_1 \,=\, 0, 
         \\ [0.9em]
         \ds \partial_t D_1 \,+\, \mathcal{A} \,D_0 \,-\, \sqrt{2}\, \mathcal{A}^*D_2 \,+\, \frac{1}{T_0} \,\mathcal{A}\,\Phi\,D_{\infty,0}  \,-\, \sqrt{\frac{1}{T_0}}\,E\,(D_0 - D_{\infty,0}) \,=\, -\frac{1}{\tau_0}\,D_1, 
         \\ [0.9em]
         \ds \partial_t D_k \,+\, \sqrt{k}\,\mathcal{A}\, D_{k-1} \,-\, \sqrt{k+1}\,\mathcal{A}^* D_{k+1} \,-\, \sqrt{\frac{k}{T_0}}\,E\,D_{k-1} \,=\, -\frac{k}{\tau_0}\,D_k \qc \forall \,k \,\ge\, 2\,,
    \end{array}
    \right.
\end{equation*}
where, $\mathcal{A} = \sqrt{T_0}\,\partial_x$ denotes the spatial differential operator and $\mathcal{A}^* = -\sqrt{T_0}\,\partial_x$ is its adjoint with respect to the $L^2(\mbT)$ inner product. The linearized term $E\,\partial_vf_\infty$ corresponds to $\mathcal{A}\,\Phi \,D_{\infty,0}$ in the equation on $D_1$, while the nonlinear term $E\,\partial_v(f-f_\infty)$ corresponds to the last term on the left hand side of the second and third lines in the previous system. To close this system, we rewrite the Poisson equation, using that $H_0 = 1$, as
$$
E \,=\, -\partial_{x}\Phi \qc \,-  \partial_{xx}^2 \Phi \,=\, D_0 \,-\, D_{\infty,0}\,,
$$
while the quasi-neutrality condition \eqref{eq:vpfp_quasineutrality} becomes
\begin{equation*} 
    \xdual{D_0 - D_{\infty,0}}{1} = 0\,,
\end{equation*}
and the uniqueness condition \eqref{eq:vpfp_uniqueness} is
\begin{equation*}
    \xdual{\Phi}{1} = 0\,.
\end{equation*}
We now discretize the velocity variable by retaining only the first $N_H+1$ Hermite modes $(D_{k})_{0\leq k\leq N_H }$, which leads to the following truncated system
\begin{equation} \label{eq:vpfp1d_hermite:disc}
	\left\{
	\begin{array}{l}
		\ds \partial_t D_0 - \mathcal{A}^* D_1 = 0, 
		\\ [0.5em]
		\ds \partial_t D_1 + \mathcal{A} D_0 - \sqrt{2}\, \mathcal{A}^*D_2 + \frac{1}{T_0} \mathcal{A}\Phi\, D_{\infty,0} - \sqrt{\frac{1}{T_0}}\,E\,(D_0 - D_{\infty,0}) = -\frac{1}{\tau_0}D_1, 
		\\ [0.8em]
		\ds \partial_t D_k + \sqrt{k}\,\mathcal{A} D_{k-1} - \sqrt{k+1}\,\mathcal{A}^* D_{k+1} - \sqrt{\frac{k}{T_0}}\,E\,D_{k-1} = -\frac{k}{\tau_0}D_k\,,\quad k \in \{2\,,\dots\,,N_H \},
		\\ [0.8em]
		\ds E = -\partial_{x}\Phi \qc -  \partial_{xx}^2 \Phi = D_0 - D_{\infty,0}\,,
	\end{array}
	\right.
\end{equation}
which is closed by taking $D_{N_H+1}=0$.


\subsection{Discontinuous Galerkin discretization for the spatial variable} 
\label{sec:spatial}

In this subsection, we turn to the spatial discretization of the system \eqref{eq:vpfp1d_hermite:disc}. We begin by dividing the domain $\mbT$ into a finite collection $\mcT_h $ of elements $K_j = [x_{j-\half},x_{j+\half}]$ with $j \in \mcJ = \set{1,\dots,N_x}$, where 
\begin{equation*}
    x_{\half} < x_{\frac32} < \cdots < x_{N_x-\frac12} < x_{N_{x}+\frac12}.
\end{equation*}
We also denote by $h_j = x_{j+\half} - x_{j-\half}$ the cell size and $h = \max\limits_{j\in\mcJ}h_j$ the mesh size. The mesh $\mcT_h$ is assumed to be quasi-uniform, namely, there exists a constant $C_{qu} > 0$ independent of $h$ such that 
\begin{equation*} 
    h \le C_{qu}\min_{j\in\mcJ} h_j\,.
\end{equation*}
On this mesh, we define the piecewise polynomial space $U_h^m \subset L^2(\mcT_h)$ for any $m \in \mbN$ as follows:
\begin{equation*}
     U_h^m = \set{u\in L^2(\mbT)\setmid \restr{u}{K_j}\in \mathscr{P}_m(K_j)\qc \forall j\in\mcJ}, 
\end{equation*}
where the local space $\mathscr{P}_m(K_j)$ consists of polynomials of degree up to $m$ on $K_j$. The functions in $U_h^m$ may be discontinuous across cell interfaces. Therefore, we denote by $u^-$ and $u^+$ the left and right limits of the function $u$ at $x$, respectively,
\begin{equation*}     
u^\pm(x) \,=\, \lim_{\delta\goto 0^+}u(x\pm\delta). 
\end{equation*}
Then, the average $\ave{\cdot}$ and jump $\jmp{\cdot}$ of the function $u$ at $x$ are given by
\begin{equation*}
    \ave{u}(x) = \half\bigl(u^+(x) + u^-(x)\bigr)\qc \jmp{u}(x) = u^+(x) - u^-(x).
\end{equation*}

We now focus on the discretization of the first three equations in \eqref{eq:vpfp1d_hermite:disc} whereas the approximation $(E_h,\Phi_h)$ with $\Phi_h\in U_h^m$, which will be specified in the next section. For a given initial data $D_h^{\textrm{in}}\,=\,\left(D_{h,k}^{\textrm{in}}\right)_{0\leq k\leq N_H}$ such that for any $k \in \{0,\dots , N_H\}$,
\[
D_{h,k}^{\textrm{in}}\in U^m_h\,,
\]
we seek $D_{h,k}(t) \in U_h^m$ such that for all test functions $u_k \in U_h^m$,
\begin{equation} \label{eq:vpfp1d_hermite_semidiscrete_vfp}
    \left\{
    \begin{array}{l}
         \ds \xhdual{\partial_t D_{h,0}}{u_0} - \xahadjbimap{D_{h,1}}{u_0} = 0,  
         \\ [0.8em]
         \ds \begin{aligned}
             \xhdual{\partial_t D_{h,1}}{u_1} 
             &+\; \xahbimap{D_{h,0}}{u_1} \,-\, \sqrt{2}\,\xahadjbimap{D_{h,2}}{u_1} \;+\; \frac{D_{\infty,0}}{T_0}\xahbimap{\Phi_h}{u_1} 
             \\[0.8em]
             &- \sqrt{\frac{1}{T_0}}\;\xhdual{E_h(D_{h,0}\;-\;D_{\infty,0})}{u_1} \;=\; -\frac{1}{\tau_0}\;\xhdual{D_{h,1}}{u_1},
         \end{aligned}
         \\ [3.5em]
         \ds \begin{aligned}
             \xhdual{\partial_t D_{h,k}}{u_k} 
             &+\; \sqrt{k}\,\xahbimap{D_{h,k-1}}{u_k} \;-\; \sqrt{k+1} \,\xahadjbimap{D_{h,k+1}}{u_k} 
             \\[0.8em]
             &-\; \sqrt{\frac{k}{T_0}}\;\xhdual{E_h D_{h,k-1}}{u_k} \;=\; -\frac{k}{\tau_0}\;\xhdual{D_{h,k}}{u_k}\,,
         \end{aligned}
    \end{array}
    \right.
\end{equation}
for $k\in\{2,\ldots,N_H\}$ and $D_{h,N_H+1}=0$.
The bilinear forms $\xahbimap{\cdot}{\cdot}$ and $\xahadjbimap{\cdot}{\cdot}$ respectively provide a consistent approximation of the operators $\mathcal{A}$ and $\mathcal{A}^*$ in the weak sense. More precisely, they are defined for all $(D,u) \in U_h^m\times U_h^m$ as follows:
\begin{equation} \label{eq:ops_ah_global}
    \left\{
    \begin{array}{l}
        \ds \xahbimap{D}{u} \;\defeq\; -\sqrt{T_0}\left(\sum_{j\in\mcJ}\hat{g}_{a,j-\half}(D)\jmp{u}_{j-\half} + \sum_{j\in\mcJ}\xKdual[K_j]{D}{\partial_x u}\right),
        \\ [1.5em]
        \ds \xahadjbimap{D}{u} \;\defeq\;  +\sqrt{T_0}\left(\sum_{j\in\mcJ}\hat{g}_{a,j-\half}^*(D)\jmp{u}_{j-\half} + \sum_{j\in\mcJ}\xKdual[K_j]{D}{\partial_xu}\right).
    \end{array}
    \right.
\end{equation}
The numerical fluxes $(\hat{g}_a,\hat{g}_a^*)$ are chosen to be alternating fluxes
\begin{equation}
  \label{eq:ops_ah_flux}
   \hat{g}_a(D) = D^- \qc \hat{g}_a^*(D) = D^+ \quad\text{or}\quad \hat{g}_a(D) = D^+ \qc \hat{g}_a^*(D) = D^-.
 \end{equation}

It is worth mentioning that inserting \eqref{eq:ops_ah_flux} into \eqref{eq:ops_ah_global} and summing over $j \in \mcJ$, we get that
\begin{equation*}
    \begin{aligned}
        \xahbimap{u}{v} - \xahadjbimap{v}{u} = -\sqrt{T_0}\sum_{j\in\mcJ} \left(\ave{u}_{j-\half}\jmp{v}_{j-\half} + \jmp{u}_{j-\half}\ave{v}_{j-\half} - \jmp{uv}_{j-\half}\right) = 0,
    \end{aligned}
\end{equation*}
which yields the preservation of the duality property
\begin{equation}
\label{ah:dual}
\xahbimap{u}{v} \,=\, \xahadjbimap{v}{u}, \quad \forall \,u,\,v \,\in U_h^m. 
\end{equation}
Moreover, we set $D = D_{\infty,0}$ in \eqref{eq:ops_ah_global} with \eqref{eq:ops_ah_flux} to get
\begin{equation*}
    \xahbimap{D_{\infty,0}}{u} \,=\, -\sqrt{T_0}\,D_{\infty,0}\left(\sum_{j\in\mcJ}\jmp{u}_{j-\half} + \sum_{j\in\mcJ}\int_{K_j}\partial_x u\,\dd{x}\right),
\end{equation*}
which yields 
\begin{equation} \label{ah:kernel}
    \xahbimap{D_{\infty,0}}{u} = 0 \qc \forall u\in U_h^m.
\end{equation}
Finally, we emphasize that the choice of discretization of the linearized term $E\,\partial_vf_\infty$ as $D_{\infty,0}\,\xahbimap{\Phi_h}{u_1}/T_0$ in the second line of \eqref{eq:vpfp1d_hermite_semidiscrete_vfp} will play a fundamental role to preserve the linearized energy structure (see Proposition \ref{prop:4:2}).


\section{Discretization of Poisson equation}
\label{sec:disc_poisson}

We move to the discretization of the Poisson equation, where we aim for flexibility by treating various cases. Given an approximation $D_{h,0}$ of $D_0$, we look for $E_h \in W_h$ and $\Phi_h \in V_h$ such that for all test functions $w \in W_h$ and $v \in V_h$,
\begin{equation} \label{eq:vpfp1d_hermite_semidiscrete_poisson}
\left\{
\begin{array}{ll}
    \ds\xhdual{E_h}{w} &= -\xbhbimap{\Phi_h}{w}\,, \\[0.9em] 
    \ds-\xbhadjbimap{E_h}{v} &= \xhdual{D_{h,0}-D_{\infty,0}}{v},
    \end{array}\right.
\end{equation}
where $V_h$ and $W_h$ are finite-dimensional subspaces of $L^2(\mbT)$ specified later, with $V_h\subseteq U^m_h$ for the second line in \eqref{eq:vpfp1d_hermite_semidiscrete_vfp} to be well defined. The bilinear forms $\xbhbimap{\cdot}{\cdot}$ and $\xbhadjbimap{\cdot}{\cdot}$ again separately approximate the operator $\partial_x$ and $-\partial_x$ in the weak sense. This system is closed by the compatibility condition (quasi-neutrality) 
\begin{equation*}     
\xhdual{D_{h,0} - D_{\infty,0}}{1} = 0\,,
\end{equation*}
and the uniqueness condition
\begin{equation*} 
    \xhdual{\Phi_h}{1} = 0\,.
\end{equation*}

Here we propose two different discretizations for the Poisson equation : local discontinuous Galerkin  and Raviart-Thomas methods. As we shall show later in Section \ref{sec:preliminary_results}, these discretizations preserve the elementary properties of the continuous equation (coercivity, $L^\infty$ estimate) so that the large time behavior of the solution may be studied for the discrete nonlinear system.  

\subsection{Local discontinuous Galerkin method} 
In the local discontinuous Galerkin case, proposed in \cite{cockburn_local_1998}, the functional spaces $V_h$ and $W_h$ are 
\[V_h = \{v\in U_h^m\,|\, \xhdual{v}{1}\,=\,0\}\,,\quad \textrm{and}\quad W_h = U_h^m\,.\]
The bilinear forms are defined for all $(\Phi,w) \in V_h\times W_h$ and $(E,v) \in W_h\times V_h$ by 
\begin{equation} \label{eq:ops_bh_ldg_global}
    \left\{
    \begin{array}{l}
        \ds \xbhbimap{\Phi}{w} \;\defeq\, -\left(\sum_{j\in\mcJ}\hat{g}_{b,j-\half}(\Phi)\jmp{w}_{j-\half} + \sum_{j\in\mcJ}\xKdual[K_j]{\Phi}{\partial_x w}\right),
        \\ [1.5em]
        \ds \xbhadjbimap{E}{v} \,\defeq\, +\left(\sum_{j\in\mcJ}\hat{g}_{b,j-\half}^*(E)\jmp{v}_{j-\half} + \sum_{j\in\mcJ}\xKdual[K_j]{E}{\partial_x v}\right).
    \end{array}
    \right.
\end{equation}
Here, we choose the alternating fluxes :
\begin{equation} \label{eq:ops_bh_ldg_flux}
    \hat{g}_b(\Phi)  = \Phi^- \qc \hat{g}_b^*(E)  = E^+ \quad\text{or}\quad \hat{g}_b(\Phi)  = \Phi^+ \qc \hat{g}_b^*(E)  = E^-.
\end{equation}

\subsection{Raviart-Thomas method}
Another approach is the mixed finite element method based on the one-dimensional Raviart-Thomas element, as discussed in \cite{bessemoulin-chatard_stability_2022}. In this case, the functional spaces $V_h$ and $W_h$ are 
\[V_h = \{v\in U_h^m\,|\, \xhdual{v}{1}\,=\,0\}\,,\quad \textrm{and}\quad W_h = U_h^{m+1} \cap C^0(\mbT)\,.\]
The bilinear forms $(b_h,b_h^*)$ are defined for all $(\Phi,w) \in V_h\times W_h$ and $(E,v) \in W_h\times V_h$ by
\begin{equation} \label{eq:ops_bh_rt}
\left\{
\begin{array}{l}
    \ds\xbhbimap{\Phi}{w} \,\defeq\; -\sum_{j\in\mcJ}\xKdual[K_j]{\Phi}{\partial_x w}\,, \\[0.9em] 
    \ds\xbhadjbimap{E}{v} \;\defeq\, -\sum_{j\in\mcJ}\xKdual[K_j]{\partial_x E}{v}.\end{array}\right.
\end{equation}
It is easy to verify that $b_h$ and $b_h^*$ are constructed such that the duality property \eqref{ah:dual} is satisfied.  
We will see that these approximations also guarantee the desired discrete $L^\infty$ estimate of $E_h$. Simultaneously, the  coercivity is also preserved at the discrete level.


\section{Trend to equilibrium and invariants for the discrete system}
\label{sec:main_results}


\subsection{Strong reformulation of the discrete system}
For a given initial data $D_h^{\textrm{in}}\,=\,\left(D_{h,k}^{\textrm{in}}\right)_{0\leq k\leq N_H}$ such that
\[
D_{h,k}^{\textrm{in}}\in U^m_h\,,\quad \forall \,k \in \{0,\dots , N_H\}\,,
\]
we rewrite the combined scheme \eqref{eq:vpfp1d_hermite_semidiscrete_vfp} and \eqref{eq:vpfp1d_hermite_semidiscrete_poisson} in the Riesz representation
\begin{equation} 
\label{eq:vpfp1d_hermite_semidiscrete_riesz}
	\left\{
	\begin{array}{l}
		\ds \partial_t D_{h,0} \,-\, \mathcal{A}_h^*\,D_{h,1} \,=\, 0,
		\\ [1.em]
		\ds \partial_t D_{h,1} + \mathcal{A}_h \,D_{h,0} - \sqrt{2}\,\mathcal{A}_h^*\,D_{h,2} + \frac{D_{\infty,0}}{T_0}\mathcal{A}_h\Phi_h - \sqrt{\frac{1}{T_0}}\Pi_h\left( E_h(D_{h,0}-D_{\infty,0})\right) = -\frac{1}{\tau_0}D_{h,1},
		\\ [1.em]
		\ds \partial_t D_{h,k} +\sqrt{k}\,\mathcal{A}_hD_{h,k-1} - \sqrt{k+1} \,\mathcal{A}_h^* D_{h,k+1} - \sqrt{\frac{k}{T_0}} \Pi_h\left( E_hD_{h,k-1}\right) = -\frac{k}{\tau_0} D_{h,k} \qc 2\leq k \leq N_H\,,
		\\ [1.2em]
		\ds E_h \,=\, -\mathcal{B}_h\,\Phi_h \qc -\mathcal{B}_h^*\,E_h \,=\, D_{h,0}\,-\;D_{\infty,0}\qc     \xhdual{\Phi_h}{1} \,=\, 0\,,
		\\ [1.em]
		\ds D_h(t=0)\,=\, D_{h}^{\textrm{in}}\,,
	\end{array}
	\right.
\end{equation}
with $D_{h,N_H+1}=0$ and where $\Pi_h$ denotes the $L^2$ orthogonal projection operator on the space $U_h^m$ and  the discrete operators $\mathcal{A}_h$, $\mathcal{A}_h^*: U_h^m \goto U_h^m$ are defined for all $(D,u) \in U_h^m\times U_h^m$ by
\begin{equation} 
\label{eq:ops_ah_riesz}
\left\{
\begin{array}{l}
    \ds\xhdual{\mathcal{A}_h D}{u} \,=\, \xahbimap{D}{u},
    \\[0.75em] 
    \ds\xhdual{D}{ \mathcal{A}_h^* u } \,=\, \xahadjbimap{u}{D}.
    \end{array}\right.
\end{equation}

The operators $\mathcal{A}_h$ and $\mathcal{A}_h^*$ are adjoint with respect to the $L^2(\mathbb{T})$ inner product thanks to \eqref{ah:dual}, that is,
\begin{equation} 
\label{eq:ops_ah_discrete_duality}
    \xhdual{\mathcal{A}_h u}{v} \,=\, a_h(u,v)\,=\, a_h^*(v,u) \,=\,\xhdual{u}{\mathcal{A}_h^*v}\,,
\end{equation}
for all $u \in U_h^m$ and $v \in U_h^m$.
Moreover, $D_{\infty,0}$ lies in the kernel of $\mathcal{A}_h$ by \eqref{ah:kernel}, namely,
\begin{equation} 
\label{eq:ops_ah_discrete_kernel}
    \mathcal{A}_h D_{\infty,0} \,=\, 0 \,.
\end{equation}

Similarly, the discrete operators $ \mathcal{B}_h:V_h\goto W_h$ and $\mathcal{B}_h^*: W_h\goto V_h$ are defined as follows: for all $(v,w) \in V_h \times  W_h$,
\begin{equation} \label{eq:ops_bh_riesz}
\left\{
\begin{array}{l}
	\ds\xhdual{\mathcal{B}_h v}{w} \,=\, \xbhbimap{v}{w},  
    \\[0.75em] 
    \ds\xhdual{v}{\mathcal{B}_h^* w} \,=\, \xbhadjbimap{w}{v}.
    \end{array}\right.
\end{equation}
This formulation naturally preserves the duality structure of the problem
\begin{equation} \label{eq:ops_bh_discrete_duality}
	\xhdual{\mathcal{B}_hv}{w}\,=\,b_h(v,w) =  b^*_h(w,v)\,=\,\xhdual{v}{\mathcal{B}_h^*w}\,,\quad 
	\forall (v,w)\in V_h\times W_h\,,
\end{equation}
where the finite element spaces $(V_h, W_h)$ are defined according to the choice of $(b_h,b_h^*)$ in  Section \ref{sec:disc_poisson}. 

This strong formulation of the system \eqref{eq:vpfp1d_hermite_semidiscrete_vfp}-\eqref{eq:vpfp1d_hermite_semidiscrete_poisson}  is solely intended to simplify the presentation and makes it closer to the continuous framework.


\subsection{Main result and strategy}

Our main result tackles the large time behavior of the discrete solutions to the Hermite--Discontinous Galerkin numerical scheme \eqref{eq:vpfp1d_hermite_semidiscrete_riesz}-\eqref{eq:ops_bh_riesz} for the fully nonlinear Vlasov-Poisson-Fokker-Planck system \eqref{eq:vpfp1d}. More precisely, Theorem \ref{thm:vpfp1d_hermite_semidiscrete_main_theorem} below  ensures that the scheme \eqref{eq:vpfp1d_hermite_semidiscrete_riesz}-\eqref{eq:ops_bh_riesz} provides an asymptotic preserving approximation in the large time regime, that is the numerical approximation converges to the equilibrium state \eqref{eq:vpfp1d_hermite_equilibrium} with an exponential decay rate. This reflects the behavior of the continuous solutions to the Vlasov-Poisson-Fokker-Planck system \eqref{eq:vpfp1d} since the exponential rate is uniform in the mesh size, ensuring uniform numerical stability among others. 

Solving the system \eqref{eq:vpfp1d_hermite_semidiscrete_riesz}-\eqref{eq:ops_bh_riesz} yields the Hermite coefficients $D_h\,=\,(D_{h,k})_{0\leq k\leq N_H}$, from which we construct the approximation $f_h$ of the solution $f$ to \eqref{eq:vpfp1d} as
\begin{equation}\label{discrete:kinetic}
    f_h(t,x,v) = \sum_{k=0}^{N_H} D_{h,k}(t,x) \,e_k\left(v\right)\,,
\end{equation}
whereas the equilibrium is $f_{\infty} = \rho_\infty\mathcal{M}$ where $\rho_\infty$ is a constant for \eqref{eq:vpfp1d}. Then, we introduce the following functional for all $t\in \mathbb{R}^+$
\begin{equation}\label{Eh:0}
    \mathcal{E}_h(t)
    \ds\,:=\,\ds
    \frac{1}{2}\norm{f_h(t)-f_{\infty}}_{L^2(f_\infty^{-1})}^2 \,+\, \frac{1}{2T_0}\norm{E_h(t)}_{L^2(\mbT)}^2\,.
\end{equation}
Our result reads as follows.
\begin{theorem} \label{thm:vpfp1d_hermite_semidiscrete_main_theorem} Let $\tau_0, T_0>0$ be fixed
    and consider the solution $(D_h,E_h)$ to \eqref{eq:vpfp1d_hermite_semidiscrete_riesz}-\eqref{eq:ops_bh_riesz}. There exists a positive constant $\kappa$ such that, if the initial data satisfies
    \begin{equation} \label{eq:vpfp1d_hermite_semidiscrete_hypocoercive_condition}
    \mathcal{E}_h(t=0)
     \,\le\, \kappa^2\,\min(\tau_0^2,\tau_0^{-2}),
    \end{equation}
    then it holds for all $t\in \mathbb{R}^+$
    \begin{equation} \label{eq:vpfp1d_hermite_semidiscrete_hypocoercive_estimate}
    	\mathcal{E}_h(t)
    	 \,\le\, 3\,\mathcal{E}_h(t=0)\,
    	 e^{-\kappa\,\min(\tau_0,\tau_0^{-1})\,t}\;.
    \end{equation}
The constant $\kappa$ depends only on the temperature $T_0$, the domain length $\abs{\mbT}$, the polynomial degree $m$, and the mesh quasi-uniformity constant $C_{qu}$. 
\end{theorem}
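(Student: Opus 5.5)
The plan is a discrete hypocoercivity argument in the spirit of Dolbeault--Mouhot--Schmeiser, as adapted in \cite{blaustein_structure_2024}. The nonlinear terms are treated as perturbations and closed by a bootstrap (continuity) argument.

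First, by orthonormality of $(e_k)$ in $L^2(\mathcal{M}^{-1}\dd{v})$, we have $\mathcal{E}_h = \frac12\sum_k\xnorm{D_{h,k}-D_{\infty,k}}^2 + \frac{1}{2T_0}\xnorm{E_h}^2$. We test \eqref{eq:vpfp1d_hermite_semidiscrete_riesz} with $D_{h,k}-D_{\infty,k}$ and use three facts:
\begin{itemize}
\item the duality \eqref{eq:ops_ah_discrete_duality}, under which the transport terms cancel in pairs;
\item the kernel property \eqref{eq:ops_ah_discrete_kernel};
\item the identity $\frac{D_{\infty,0}}{T_0}\xdual{\mathcal{A}_h\Phi_h}{D_{h,1}} = \frac{D_{\infty,0}}{T_0}\xdual{\Phi_h}{\mathcal{A}_h^*D_{h,1}}$. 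Combined with $\partial_t D_{h,0}=\mathcal{A}_h^*D_{h,1}$, the Poisson system and \eqref{eq:ops_bh_discrete_duality}, this equals $\frac{d}{dt}\frac{1}{2T_0}\xnorm{E_h}^2$ up to the factor $D_{\infty,0}$, which is normalized away.
\end{itemize}
This yields
\[
\frac{d}{dt}\mathcal{E}_h = -\frac{1}{\tau_0}\sum_{k\ge1}k\,\xnorm{D_{h,k}}^2 + \mathcal{N}_h .
\]
Here $\mathcal{N}_h$ collects the nonlinear terms $\sqrt{k/T_0}\,\xdual{E_h D_{h,k-1}}{D_{h,k}}$, with $D_{h,0}$ replaced by $D_{h,0}-D_{\infty,0}$ when $k=1$. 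The projection $\Pi_h$ disappears because the test functions lie in $U_h^m$.

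Second, to recover dissipation in $D_{h,0}-D_{\infty,0}$ and $E_h$, we add the mixed term $\delta\,\xdual{D_{h,1}}{\mathcal{A}_h\,\mathcal{L}_h(D_{h,0}-D_{\infty,0})}$. Here $\mathcal{L}_h$ is a discrete inverse elliptic operator, for instance $\Phi$-like via $(1+\mathcal{A}_h^*\mathcal{A}_h)^{-1}$, or more simply the mixed term $\delta\xdual{D_{h,1}}{E_h}$ combined with an $\mathcal{A}_h$-based Poincaré corrector. Differentiating the mixed term, the equation for $D_{h,1}$ produces the macroscopic coercive quantities $\xnorm{\mathcal{A}_h(D_{h,0}-D_{\infty,0})}^2$ (suitably regularized) and $\xnorm{E_h}^2$. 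The remaining linear terms involve $D_{h,1}$ and $D_{h,2}$ and are absorbed by the microscopic dissipation through Young's inequality.

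This step requires several preliminary discrete properties:
\begin{itemize}
\item a discrete Poincaré--Wirtinger inequality for $\mathcal{A}_h$ on mean-zero functions, i.e.\ the discrete coercivity, uniform in $h$ and depending on $m$, $C_{qu}$ and $\abs{\mbT}$;
\item the bound $\xnorm{E_h}\lesssim\xnorm{D_{h,0}-D_{\infty,0}}$ for both Poisson discretizations;
\item the boundedness of $\mathcal{A}_h\mathcal{L}_h$.
\end{itemize}
Choosing $\delta\sim\min(\tau_0,\tau_0^{-1})$ makes the modified functional $\mathcal{H}_h$ equivalent to $\mathcal{E}_h$, with constants within $[\frac23,\frac43]$, say; this is where the factor $3$ comes from. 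It also gives
\[
\frac{d}{dt}\mathcal{H}_h\le -2\kappa\min(\tau_0,\tau_0^{-1})\,\mathcal{E}_h + C\abs{\mathcal{N}_h}+C\delta\abs{\mathcal{N}'_h},
\]
where $\mathcal{N}'_h$ is the nonlinear term coming from the mixed functional.

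Third, we control the nonlinear terms, which I expect to be the main obstacle. The key tool is a discrete $L^\infty$ estimate
\[
\norm{E_h}_{L^\infty}\le C\xnorm{D_{h,0}-D_{\infty,0}},
\]
uniform in $h$. This is the 1D analogue of the Sobolev embedding, obtained from the structure of the LDG or Raviart--Thomas discretizations, presumably proved in Section~\ref{sec:preliminary_results}. With it,
\[
\abs{\mathcal{N}_h}\le C\,\mathcal{E}_h^{1/2}\Big(\sum_{k\ge1}k\xnorm{D_{h,k}}^2\Big)^{1/2}\mathcal{E}_h^{1/2}.
\]
The delicate point is that each nonlinear product contains at least one microscopic factor $D_{h,k}$ with $k\ge1$, and the weight $\sqrt{k}$ is matched by the dissipation $k\xnorm{D_{h,k}}^2$. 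Consequently $\abs{\mathcal{N}_h}\le C\sqrt{\mathcal{E}_h}\times(\text{dissipation})$, which stays uniform in $N_H$. In the mixed term, $\Pi_h(E_h(D_{h,0}-D_{\infty,0}))$ is bounded by $\norm{E_h}_{L^\infty}\xnorm{D_{h,0}-D_{\infty,0}}$ against $\xnorm{\mathcal{A}_h\mathcal{L}_h(\cdot)}$. Hence
\[
\frac{d}{dt}\mathcal{H}_h\le -\big(2\kappa\min(\tau_0,\tau_0^{-1}) - C\max(1,\tau_0)\sqrt{\mathcal{E}_h}\big)\,\mathcal{D}_h .
\]

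Finally, a bootstrap closes the argument. Let $t^*$ be the supremum of times on which $\mathcal{E}_h\le 3\mathcal{E}_h(0)$. On $[0,t^*)$, the smallness assumption \eqref{eq:vpfp1d_hermite_semidiscrete_hypocoercive_condition}, with $\kappa$ reduced if necessary, makes the nonlinear contribution at most half of the dissipation. Grönwall's lemma then gives exponential decay of $\mathcal{H}_h$, and hence the estimate \eqref{eq:vpfp1d_hermite_semidiscrete_hypocoercive_estimate}. Since this strictly improves the bootstrap bound, continuity yields $t^*=\infty$.
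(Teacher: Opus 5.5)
Your architecture is the paper's. It has the energy identity, a corrector weighted by $\delta\sim\min(\tau_0,\tau_0^{-1})$, and the bound $\|E_h\|_{L^\infty}\lesssim\|u\|$ with $u:=D_{h,0}-D_{\infty,0}$ (unlabeled norms are in $L^2(\mathbb{T})$). It also absorbs the nonlinear terms under smallness and closes by continuity. The gap is in the linear part of $\frac{d}{dt}\langle D_{h,1},\mathcal{A}_h\mathcal{L}_h u\rangle$, which you settle in one sentence; this is exactly where the discontinuous Galerkin structure matters.

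\textbf{The Poisson term.} The contribution $-\frac{D_{\infty,0}}{T_0}\langle \Phi_h,\mathcal{A}_h^*\mathcal{A}_h\mathcal{L}_h u\rangle$ produces $-\frac{D_{\infty,0}}{T_0}\|E_h\|^2$ only if $\mathcal{L}_h$ is the exact inverse of $\mathcal{A}_h^*\mathcal{A}_h$ on mean-zero functions. Then $\mathcal{A}_h^*\mathcal{A}_h\mathcal{L}_h u=u=-\mathcal{B}_h^*E_h$, and the two dualities close the computation whatever Poisson solver is used. This is the paper's $F_h=-\mathcal{A}_h\Psi_h$ from \eqref{eq:ops_ah_elliptic}. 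With your regularized $(1+\mathcal{A}_h^*\mathcal{A}_h)^{-1}$, the term becomes
\[
\bigl\langle (\mathcal{B}_h^*\mathcal{B}_h)^{-1}u,\ \mathcal{A}_h^*\mathcal{A}_h(1+\mathcal{A}_h^*\mathcal{A}_h)^{-1}u\bigr\rangle .
\]
For Raviart--Thomas, or for LDG with $\hat g_b\neq\hat g_a$ (where $\mathcal{B}_h=-\mathcal{A}_h^*/\sqrt{T_0}$), these two symmetric positive operators need not commute, so the form need not have a sign. It is also of the same order as the coercive term, so Young's inequality cannot dispose of it. Your fallback $\delta\langle D_{h,1},E_h\rangle$ coincides with the paper's corrector only for LDG with matched fluxes, where $E_h=\sqrt{T_0}\,F_h$.

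\textbf{The $D_{h,2}$ term.} Write it as $\sqrt2\langle \mathcal{A}_h^*D_{h,2},\mathcal{A}_h\mathcal{L}_hu\rangle=\sqrt2\langle D_{h,2},\mathcal{A}_h(\mathcal{A}_h\mathcal{L}_hu)\rangle$. Absorbing it needs the discrete $H^2$ bound $\|\mathcal{A}_h(\mathcal{A}_h\mathcal{L}_h u)\|\lesssim\|u\|$. The boundedness of $\mathcal{A}_h\mathcal{L}_h$ that you list is not enough. The construction only controls $\mathcal{A}_h^*(\mathcal{A}_h\mathcal{L}_hu)$. Swapping $\mathcal{A}_h^*$ for $\mathcal{A}_h$ is free in the continuum, but in DG the two operators use opposite traces. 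The swap requires the uniform primal--dual balance $\|\mathcal{A}_hv\|\sim\|\mathcal{A}_h^*v\|\sim|v|_{\mathrm{DG}}$ of Lemma~\ref{lem:ops_ah_props}(i), proved with a Legendre-based test function. That balance gives Lemma~\ref{lem:ops_ah_discrete_regularity_estimate}(ii) and also underlies the $L^\infty$ bound for LDG. It is the main DG-specific ingredient and is missing from your list.

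\textbf{Smaller points.} Orthonormality in $L^2(f_\infty^{-1})$ puts the weight $\frac{1}{2D_{\infty,0}}$ on $\sum_k\|D_{h,k}-D_{\infty,k}\|^2$. That weight is what makes the $\Phi_h$ terms cancel, not a normalization. The factor $\max(1,\tau_0)$ in your final inequality would force $\mathcal{E}_h(0)\lesssim\tau_0^{-4}$ for $\tau_0>1$, which no choice of $\kappa$ absorbs. Instead, compare $|\mathcal{N}_h|+\delta|\mathcal{N}_h'|\le C\sqrt{\mathcal{E}_h}\,(\mathcal{E}_h+\mathcal{I}_h)$ with a linear dissipation $\gtrsim\min(\tau_0,\tau_0^{-1})(\mathcal{E}_h+\mathcal{I}_h)$. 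This gives exactly the stated condition $\mathcal{E}_h(0)\lesssim\min(\tau_0^2,\tau_0^{-2})$.
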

Before proceeding to the proof, we detail our strategy to derive \eqref{eq:vpfp1d_hermite_semidiscrete_hypocoercive_estimate}. There are two main difficulties in order to prove the exponential convergence of $f_h$, solution to the discrete nonlinear Vlasov-Poisson-Fokker-Planck system \eqref{eq:vpfp1d_hermite_semidiscrete_riesz}-\eqref{eq:ops_bh_riesz}, towards the equilibrium state $f_{\infty}$. The first step consists in analyzing the large time behavior of the discrete approximations to the system \eqref{eq:vpfp1d} linearized near $f_\infty$. These approximations are computed by solving a linearized scheme, corresponding to \eqref{eq:vpfp1d_hermite_semidiscrete_riesz} without the nonlinear terms $\Pi_h\left( E_hD_{h,k-1}\right)$. This scheme reads as follows
\begin{equation} \label{eq:vpfp1d_hermite_semidiscrete_riesz:linz}
	\left\{
	\begin{array}{l}
		\ds \partial_t D_{h,0} \;-\; \mathcal{A}_h^*D_{h,1} \;=\; 0,
		\\ [0.6em]
		\ds \partial_t D_{h,1} \;+\; \mathcal{A}_h D_{h,0} - \sqrt{2}\;\mathcal{A}_h^*D_{h,2} \;+\; \frac{D_{\infty,0}}{T_0}\;\mathcal{A}_h\Phi_h  \,=\, -\frac{1}{\tau_0}D_{h,1},
		\\ [0.9em]
		\ds \partial_t D_{h,k} \;+\; \sqrt{k}\;\mathcal{A}_hD_{h,k-1} \;-\; \sqrt{k+1} \;\mathcal{A}_h^*D_{h,k+1}  \,=\, -\frac{k}{\tau_0}\;D_{h,k} \qc 2\leq k \leq N_H\,,
		\\ [1.2em]
		\ds E_h \;=\; -\mathcal{B}_h\Phi_h \qc -\mathcal{B}_h^*E_h \;=\; D_{h,0}-D_{\infty,0}\qc     \xhdual{\Phi_h}{1} \;=\; 0\,,
		\\ [1.em]
		\ds D_h(t=0)\,=\, D_{h}^{\textrm{in}}\,,
	\end{array}
	\right.
\end{equation}
with $D_{h,N_H+1}=0$ and where $(\mathcal{A}_h,\mathcal{A}^*_h,\mathcal{B}_h,\mathcal{B}^*_h)$ are defined by \eqref{eq:ops_ah_riesz}-\eqref{eq:ops_bh_riesz}. The key point is that for the linearized system \eqref{eq:vpfp1d_hermite_semidiscrete_riesz:linz}, the functional $\mathcal{E}_h$ given in \eqref{Eh:0} constitutes a Lyapunov functional. It is indeed dissipated according to the following statement.
\begin{proposition}\label{prop:4:2}
	Consider the solution $(D_h,E_h)$ to \eqref{eq:vpfp1d_hermite_semidiscrete_riesz:linz} along with its associated linearized energy $\mathcal{E}_h$ defined by \eqref{Eh:0}. Then the following linearized energy estimate holds for all $t\in \mathbb{R}^+$,  
	\[
	\frac{\dd}{\dd{t}}\mathcal{E}_h(t) \,+\, \frac{1}{\tau_0}\,\mathcal{I}_h(t)
	\,=\,0
	\,,
	\]
	where $\mathcal{I}_h$ is defined for all $t\geq 0$ as
	\begin{equation}\label{def:I:h}
	\mathcal{I}_h(t) \;\defeq\; 
    \frac{1}{D_{\infty,0}}\sum_{k=0}^{N_H} k\;\xnorm{D_{h,k}(t) - D_{\infty,k}}^2.
	\end{equation}
\end{proposition}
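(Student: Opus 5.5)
The plan is to differentiate the two parts of $\mathcal{E}_h$ separately and show that every non-dissipative contribution cancels. The cancellations come from the discrete duality \eqref{eq:ops_ah_discrete_duality}, the kernel property \eqref{eq:ops_ah_discrete_kernel} and the duality \eqref{eq:ops_bh_discrete_duality} of the Poisson operators.

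\emph{Kinetic part.} Since $(e_k)$ is orthonormal in $L^2(\mathcal{M}^{-1}\dd{v})$ and $f_\infty=D_{\infty,0}\mathcal{M}$, we have
\[
\tfrac12\norm{f_h-f_\infty}^2_{L^2(f_\infty^{-1})}=\frac{1}{2D_{\infty,0}}\sum_{k=0}^{N_H}\xnorm{D_{h,k}-D_{\infty,k}}^2 .
\]
Its time derivative is obtained by taking the $L^2(\mbT)$ inner product of the $k$-th equation of \eqref{eq:vpfp1d_hermite_semidiscrete_riesz:linz} with $(D_{h,k}-D_{\infty,k})/D_{\infty,0}$ and summing over $k$.
\begin{itemize}
\item For $k=0$, the term is $\xdual{\mathcal{A}_h^*D_{h,1}}{D_{h,0}-D_{\infty,0}}=\xdual{D_{h,1}}{\mathcal{A}_hD_{h,0}}$, using duality and $\mathcal{A}_hD_{\infty,0}=0$.
\item The coupling between modes $k$ and $k+1$ produces $-\sqrt{k+1}\,\xdual{\mathcal{A}_hD_{h,k}}{D_{h,k+1}}$ from equation $k+1$ and $+\sqrt{k+1}\,\xdual{\mathcal{A}_h^*D_{h,k+1}}{D_{h,k}}$ from equation $k$. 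These cancel exactly by \eqref{eq:ops_ah_discrete_duality}; the case $k=0$ uses the previous item.
\item The closure $D_{h,N_H+1}=0$ leaves no boundary term at the top mode.
\item The relaxation terms give $-\frac{1}{\tau_0 D_{\infty,0}}\sum_k k\,\xnorm{D_{h,k}-D_{\infty,k}}^2=-\frac{1}{\tau_0}\mathcal{I}_h$, since $D_{\infty,k}=0$ for $k\ge1$.
\end{itemize}
The only surviving term is $-\frac{1}{T_0}\xdual{\mathcal{A}_h\Phi_h}{D_{h,1}}$.

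\emph{Field part.} Since the discrete Poisson problem is a linear solve in the time-differentiable datum $D_{h,0}$, $E_h$ and $\Phi_h$ are differentiable in time. Then
\[
\frac{\dd}{\dd t}\frac{1}{2T_0}\xnorm{E_h}^2=\frac1{T_0}\xdual{\partial_tE_h}{E_h}=-\frac1{T_0}\xdual{\partial_tE_h}{\mathcal{B}_h\Phi_h}=-\frac1{T_0}\xdual{\mathcal{B}_h^*\partial_tE_h}{\Phi_h}.
\]
This uses $\partial_tE_h\in W_h$, $\Phi_h\in V_h$ and \eqref{eq:ops_bh_discrete_duality}. Differentiating $-\mathcal{B}_h^*E_h=D_{h,0}-D_{\infty,0}$ in time gives $-\mathcal{B}_h^*\partial_tE_h=\partial_tD_{h,0}=\mathcal{A}_h^*D_{h,1}$, which is consistent because $\xdual{\mathcal{A}_h^*D_{h,1}}{1}=\xdual{D_{h,1}}{\mathcal{A}_h1}=0$. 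Hence the field part contributes $\frac1{T_0}\xdual{\mathcal{A}_h^*D_{h,1}}{\Phi_h}=\frac1{T_0}\xdual{D_{h,1}}{\mathcal{A}_h\Phi_h}$, where $\Phi_h\in V_h\subseteq U_h^m$ makes $\mathcal{A}_h\Phi_h$ well defined.

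This exactly cancels the surviving kinetic term, and the identity $\frac{\dd}{\dd t}\mathcal{E}_h+\frac1{\tau_0}\mathcal{I}_h=0$ follows. The main point requiring care is this last cancellation. It relies on the specific discretization $\frac{D_{\infty,0}}{T_0}\mathcal{A}_h\Phi_h$ of the linearized field term, together with the exact adjointness of $\mathcal{B}_h$ and $\mathcal{B}_h^*$ on $V_h\times W_h$, which holds for both the LDG and the Raviart--Thomas choices.
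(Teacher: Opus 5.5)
Your proposal is correct and follows essentially the same route as the paper. You test the Hermite equations against $D_{h,k}-D_{\infty,k}$, cancel the transport couplings using the duality \eqref{eq:ops_ah_discrete_duality} and the kernel property \eqref{eq:ops_ah_discrete_kernel}, and then cancel the leftover term $\frac{1}{T_0}\langle \mathcal{A}_h\Phi_h, D_{h,1}\rangle$ against the time derivative of $\frac{1}{2T_0}\|E_h\|^2$ via the same chain of identities the paper uses to show $\mathcal{D}_h=0$, only traversed in the opposite direction.
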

\begin{proof}
We first rewrite the linearized energy functional $\mathcal{E}_h$ within the Hermite framework. From the orthogonality property of the Hermite functions $(e_k)_{k\in\mbN}$ in $L^2(\mathcal{M}^{-1}\dd{v})$ results the following identity
	\begin{equation}
		\ds\mathcal{E}_h(t)
		\ds\,=\,
		\frac{1}{2\,D_{\infty,0}}\sum_{k=0}^{N_H}\xnorm{D_{h,k}(t)-D_{\infty,k}}^2 \,+\, \frac{1}{2\,T_0}\xnorm{E_h(t)}^2\,.\label{Eh}
	\end{equation}
To compute the time derivative of the previous right hand side, we sum the $L^2$-inner products of the first, second and third lines in \eqref{eq:vpfp1d_hermite_semidiscrete_riesz:linz} with $ D_{h,0} - D_{\infty,0}$, $ D_{h,1}$ and $ D_{h,k}$, $k\in \{2\,\dots, N_H\}$ respectively, which yields
	\[
	\frac{\dd\mathcal{E}_h}{\dd{t}} \,+\, \frac{1}{\tau_0}\mathcal{I}_h \,+\, \mathcal{C}_h\,+\, \mathcal{D}_h \,=\, 0,
\]
where $\mathcal{I}_h$ is defined in \eqref{def:I:h}
whereas $\mathcal{C}_h$ and $\mathcal{D}_h$ are given by
$$
		\left\{
		\begin{array}{l}
			\ds \mathcal{C}_h \defeq
            \frac{1}{D_{\infty,0}}
            \left(
            \xhdual{\mathcal{A}_h^*D_{h,1}}{D_{\infty,0}} + \sum_{k=0}^{N_H}\sqrt{k+1}\,\left(\xhdual{\mathcal{A}_h D_{h,k}}{D_{h,k+1}} -\xhdual{\mathcal{A}_h^* D_{h,k+1}}{D_{h,k}} \right)
            \right), 
			\\ [1.2em]
			\ds \mathcal{D}_h \defeq 
			\frac{1}{T_0}\,\xhdual{\mathcal{A}_h\Phi_h}{D_{h,1}}
            -
            \frac{1}{T_0} \xhdual{E_h}{\partial_t E_h}. 
		\end{array}
		\right.
        $$	
On the one hand, applying the duality property \eqref{eq:ops_ah_discrete_duality} and the kernel property \eqref{eq:ops_ah_discrete_kernel}, we obtain that $\mathcal{C}_h = 0$.  On the other hand, to show that $\mathcal{D}_h = 0$, we reformulate its first term thanks to the duality properties \eqref{eq:ops_ah_discrete_duality} and \eqref{eq:ops_bh_discrete_duality} along with the first and last lines in \eqref{eq:vpfp1d_hermite_semidiscrete_riesz:linz}, leading to
	\begin{equation*}
		\begin{aligned}
			\xhdual{\mathcal{A}_h\Phi_h}{D_{h,1}}
			&=  \xhdual{\Phi_h}{\mathcal{A}_h^*D_{h,1}} = \xhdual{\Phi_h}{\partial_t(D_{h,0}-D_{\infty,0})} \\[0.8em]
			&= -\xhdual{\Phi_h}{\partial_t\,\mathcal{B}_h^*E_h} = -\xhdual{\mathcal{B}_h\Phi_h}{\partial_t E_h}= \xhdual{E_h}{\partial_t E_h}.   
		\end{aligned}
	\end{equation*}
\end{proof}
Unfortunately, the estimate in Proposition \ref{prop:4:2} itself is not sufficient to prove the convergence of
the solution to the linearized scheme \eqref{eq:vpfp1d_hermite_semidiscrete_riesz:linz} towards the stationary state $f_{\infty}$ since the dissipation rate $\mathcal{I}_h$ fails to control the energy functional $\mathcal{E}_h$, that is,
\[
\mathcal{E}_h(t)\not\lesssim\mathcal{I}_h(t)  \,.
\]
The previous relation may be checked comparing \eqref{def:I:h} and \eqref{Eh} for all $D_h=(D_{h,k})_{0\leq k\leq N_H}$ such that $D_{h,k}\,=\,0$ for $k\geq 1$ and $D_{h,0}\,\neq\,D_{h,\infty}$.
For this reason, the estimate in Proposition \ref{prop:4:2} is usually referred to as an hypocoercive estimate. To bypass this difficulty, we define a modified relative energy $\mathcal{H}_h$ as
\begin{equation}
	\mathcal{H}_h(t) \,=\,
	\mathcal{E}_h(t)\,-\, \frac{\alpha_0}{D_{\infty,0}}\,\left\langle
	D_{h,1}(t),\, F_h(t)\right\rangle_{L^2\left(\mathbb{T}\right)}\,,
	\label{eq:H0}
\end{equation}
where $\alpha_0>0$ is a small free parameter and where $F_h$ is computed by solving the following elliptic problem: finding
 $(F_h,\Psi_h) \in U_h^m\times U_h^m$ such that $  \xhdual{\Psi_h}{1} = 0$ and
\begin{equation} \label{eq:ops_ah_elliptic}
	F_h = -\mathcal{A}_h \Psi_h  \qc -\mathcal{A}_h^*F  = D_{h,0}-D_{\infty,0}\,.
\end{equation}
The key point here is that this additional term will provide the missing dissipative term of order $$
-\frac{\alpha_0}{D_{\infty,0}}\,\|D_{h,0}- D_{\infty,0}\|_{L^2(\mbT)}^2.
$$ 
Hence, to get the convergence of the solution to the linearized system
\eqref{eq:vpfp1d_hermite_semidiscrete_riesz:linz} towards the stationary state, the strategy consists in
proving that $\mathcal{H}_h$ and $\mathcal{E}_h$ induce equivalent norms (see Lemma \ref{lem:vpfp1d_hermite_semidiscrete_free_energy_equivalence} below) and that there exists a
constant  $\kappa>0$ such that
$$
\frac{\dd{}}{\dd{t}} \mathcal{H}_h(t) \leq - \kappa\, \mathcal{H}(t).
$$
This strategy has already been implemented in \cite{blaustein_discrete_2024, blaustein_structure_2024, blaustein_structure_2025} within a finite volume framework for the spatial variable applied to the linearized scheme \eqref{eq:vpfp1d_hermite_semidiscrete_riesz:linz}. In the present article, we extend this strategy in the discontinuous Galerkin framework and for the fully nonlinear system \eqref{eq:vpfp1d}.
The main additional difficulty is due to the fact that $\mathcal{E}_h$ is not a Lyapunov functional neither for the continuous model nor for the fully nonlinear scheme \eqref{eq:vpfp1d_hermite_semidiscrete_riesz}-\eqref{eq:ops_bh_riesz}. Hence, we are led to control the additional nonlinear terms of order $\mathcal{E}_h(t)^2$ leading to the smallness assumption \eqref{eq:vpfp1d_hermite_semidiscrete_hypocoercive_condition} in Theorem \ref{thm:vpfp1d_hermite_semidiscrete_main_theorem}. The proof of this result is provided in Section \ref{sec:proof_of_main_theorem} below. In Section \ref{sec:preliminary_results}, we present several preliminary results on the preservation structure of the  operators $(\mathcal{A}_h,\mathcal{A}_h^*)$ and $(\mathcal{B}_h,\mathcal{B}_h^*)$ (see Lemmas \ref{lem:ops_ah_props} and \ref{lem:ops_ah_discrete_regularity_estimate} below).  
\subsection{Discrete properties}
Before moving on to the heart of the proof of Theorem \ref{thm:vpfp1d_hermite_semidiscrete_main_theorem}, we mention that the numerical method \eqref{eq:vpfp1d_hermite_semidiscrete_riesz}-\eqref{eq:ops_bh_riesz} preserves some physical properties of the nonlinear Vlasov-Poisson-Fokker-Planck system \eqref{eq:vpfp1d}. For a distribution function $f$ given by the Hermite expansion \eqref{eq:hermite_decomposition}, we define  the global mass $m_0$ by
$$
m_0(t) \,:=\, \int_{\mathbb{T}\times\mathbb{R}} f(t,x,v)\,\dd{x}\dd{v}\,:=\, \int_{\mathbb{T}} D_{0}(t,x)\,\dd{x}, \quad \forall t\geq 0\;,
$$
whereas the total momentum $m_1$ is given by
$$
m_1(t) \,:=\, \int_{\mathbb{T}\times\mathbb{R}} v\, f(t,x,v)\,\dd{x}\dd{v}\,:=\, \sqrt{T_0}\;\int_{\mathbb{T}} D_{1}(t,x)\,\dd{x}\,,\quad \forall t\geq 0\,.
$$
Finally, the total energy $W$ is defined as
\begin{equation*}
W(t) \;:=\; K(t) \;+\, 
        \frac{1}{2}\,\norm{E(t)}_{L^2(\mbT)}^2\,,\quad \forall t\geq 0,
\end{equation*}
with the kinetic energy $K$
$$
K(t) \,:=\,\int_{\mathbb{T}\times\mathbb{R}}
        \frac{|v|^2}{2}\;f(t,x,v)\,\dd{x}\dd{v}
       \,:=\, \frac{T_0}{\sqrt 2}\int_{\mathbb{T}} \left(D_{2} + \frac{1}{\sqrt 2} D_{0}\right)(t,x)\,\dd{x}\,,\quad \forall t\geq 0.
$$
Then for a distribution function $f$ solution to the Vlasov-Poisson-Fokker-Planck system \eqref{eq:vpfp1d}, we easily demonstrate  the following properties
$$
\frac{\dd}{\dd t}\left(\begin{array}{l} m_0 \\ m_1 \\ W\end{array}\right)(t) \,=\,  -\frac{1}{\tau_0}\,\left(\begin{array}{l} \quad 0 \\ \quad m_1 \\ 2K - T_0\; m_0\end{array}\right)(t).
$$
Observe that in the collisionless regime $\tau_0=+\infty$, it corresponds to the conservation of mass, momentum and energy for the Vlasov-Poisson system. 

Now, let us see how these properties are preserved by the various discretization methods we previously proposed. We first study the case of fully discontinuous Galerkin methods \eqref{eq:vpfp1d_hermite_semidiscrete_riesz}-\eqref{eq:ops_bh_riesz} with $b_h$ given by \eqref{eq:ops_bh_ldg_global}.

\begin{proposition} 
\label{prop1:disc:inv} Let $T_0>0$ and $\tau_0>0$ be fixed  in \eqref{eq:vpfp1d_hermite_semidiscrete_riesz}. The solution $(D_h,E_h)$ to \eqref{eq:vpfp1d_hermite_semidiscrete_riesz}-\eqref{eq:ops_bh_riesz} with $b_h$ given by \eqref{eq:ops_bh_ldg_global}, possesses the following discrete properties :
\begin{enumerate}[label=(\roman*), wide]
    \item conservation of the total mass 
    $$
    m_0(t)\,=\,m_0(0)\,,\quad \forall\, t\,\geq\, 0\,;
    $$
    \item dissipation of the total momentum where $b_h$  is given by \eqref{eq:ops_bh_ldg_global} with $\hat{g}_b(\Phi) \,=\, \Phi^-$,  
    \begin{equation*} 
    \frac{\dd m_1}{\dd t}(t) \,\leq\, -\frac{m_1}{\tau_0}(t)\,,
        \quad \forall\, t\in \mathbb{R}^+\,;
    \end{equation*}
    \item dissipation of the total energy with 
    $ (\hat{g}_a, \hat{g}_a^*)= (\hat{g}_b, \hat{g}_b^*) $  in \eqref{eq:ops_ah_flux} and \eqref{eq:ops_bh_ldg_flux}
    \begin{equation*} 
    \frac{\dd W}{\dd t}(t) \,=\, -\frac{1}{\tau_0} \,\left( 2\, K \,-\; T_0\, m_0\right)(t)\,,\quad \forall t\in \mathbb{R}^+\,.
    \end{equation*}
\end{enumerate} 
\end{proposition}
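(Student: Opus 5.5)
Each invariant is obtained by testing the discrete equations \eqref{eq:vpfp1d_hermite_semidiscrete_vfp} with a constant function $u\equiv 1\in U_h^m$. The key identities are the following. From \eqref{eq:ops_ah_global}, $a_h(D,1)=0$ and $a_h^*(D,1)=0$ for every $D\in U_h^m$, since jumps of constants vanish and $\partial_x 1=0$. The same holds for $b_h(\cdot,1)$ and $b_h^*(\cdot,1)$ in \eqref{eq:ops_bh_ldg_global}. With these, (i) is immediate: taking $u_0=1$ in the first line gives $\frac{\dd}{\dd t}\int_{\mbT}D_{h,0}\,\dd x = a_h^*(D_{h,1},1)=0$.

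\textbf{Momentum (ii).} Take $u_1=1$ in the second line. The terms $a_h(D_{h,0},1)$, $a_h^*(D_{h,2},1)$ and $a_h(\Phi_h,1)$ vanish, so
\[
\frac{\dd m_1}{\dd t} = -\frac{m_1}{\tau_0} + \int_{\mbT} E_h\,(D_{h,0}-D_{\infty,0})\,\dd x .
\]
It remains to show that the discrete force term is $\le 0$; I expect it to be exactly zero up to a nonpositive jump term. Using the Poisson scheme, $D_{h,0}-D_{\infty,0}=-\mathcal{B}_h^*E_h$ in $V_h$, and $E_h\in U_h^m$, so the integral equals $-b_h^*(E_h,E_h)$. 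Writing this out with $\hat g_b^*(E)=E^+$ gives
\[
-\sum_j E^+_{j-\half}\jmp{E}_{j-\half} - \sum_j\int_{K_j}E\,\partial_x E .
\]
The element integrals telescope to $-\tfrac12\sum_j\bigl((E^-)^2-(E^+)^2\bigr)_{j-\half}=\tfrac12\sum_j\jmp{E^2}_{j-\half}$. The total is then $\sum_j\bigl(\tfrac12\jmp{E^2}-E^+\jmp{E}\bigr)_{j-\half}=-\tfrac12\sum_j\jmp{E}^2_{j-\half}\le 0$, which yields (ii). This sign computation is the one delicate point. The flux orientation is chosen precisely so that the sign is favorable; with the opposite flux one would get $+\tfrac12\sum\jmp{E}^2$, so I will redo the computation carefully with the flux $\hat g_b(\Phi)=\Phi^-$, $\hat g_b^*(E)=E^+$ of the statement. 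One subtlety must also be checked: $D_{h,0}-D_{\infty,0}$ has zero mean by (i) and the quasi-neutrality condition, so it lies in $V_h$, and the identity $\langle E_h, -\mathcal{B}_h^*E_h\rangle$ requires testing the second Poisson line with $v=\Pi_{V_h}E_h = E_h-\overline{E_h}$. The mean part contributes $\overline{E_h}\int(D_{h,0}-D_{\infty,0})=0$, and $b_h^*(E_h,\text{const})=0$, so the computation stands.

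\textbf{Energy (iii).} $K=\frac{T_0}{\sqrt2}\int D_{h,2}+\frac{T_0}{2}m_0$, so by (i) only $\int D_{h,2}$ evolves. Testing the $k=2$ equation with $u_2=1$ kills the $a_h,a_h^*$ terms and leaves
\[
\frac{\dd}{\dd t}\int D_{h,2} = \sqrt{2/T_0}\int E_hD_{h,1} - \frac{2}{\tau_0}\int D_{h,2}.
\]
Hence $\frac{\dd K}{\dd t}=\sqrt{T_0}\int E_hD_{h,1} - \frac{1}{\tau_0}(2K-T_0m_0)$. For the field energy, differentiate the Poisson system in time. From $\langle E_h,w\rangle=-b_h(\Phi_h,w)$ with $w=E_h$ we get $\frac12\frac{\dd}{\dd t}\|E_h\|^2=-b_h(\Phi_h,\partial_tE_h)=-b_h^*(\partial_tE_h,\Phi_h)=\langle\Phi_h,\partial_t D_{h,0}\rangle$, using the duality \eqref{eq:ops_bh_discrete_duality}. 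By the first line this equals $a_h^*(D_{h,1},\Phi_h)=a_h(\Phi_h,D_{h,1})$. The key step is to use $(\hat g_a,\hat g_a^*)=(\hat g_b,\hat g_b^*)$, which means $a_h=\sqrt{T_0}\,b_h$ on $V_h\times U_h^m$. Then $a_h(\Phi_h,D_{h,1})=\sqrt{T_0}\,b_h(\Phi_h,D_{h,1})=-\sqrt{T_0}\langle E_h,D_{h,1}\rangle$, since $W_h=U_h^m$ allows $w=D_{h,1}$. This exactly cancels the exchange term in $\frac{\dd K}{\dd t}$, giving (iii). The matching of fluxes is the essential ingredient, and it is the step I would verify most carefully.
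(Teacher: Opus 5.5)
Your proposal is correct and follows the paper's proof essentially step for step. You test with $u\equiv 1$, rewrite the force term in (ii) as $-b_h^*(E_h,E_h)=-\tfrac12\sum_j\jmp{E_h}_{j-\half}^2$, and in (iii) use $a_h=\sqrt{T_0}\,b_h$ together with the duality chain from the proof of Proposition~\ref{prop:4:2} to turn $\sqrt{T_0}\,\xdual{E_h}{D_{h,1}}$ into $-\tfrac12\frac{\dd}{\dd t}\xnorm{E_h}^2$. Two small remarks: the identity $\tfrac12\frac{\dd}{\dd t}\xnorm{E_h}^2=-b_h(\Phi_h,\partial_tE_h)$ comes from taking $w=\partial_tE_h$ (not $w=E_h$) in the first Poisson equation, and your mean-value correction in (ii) is not needed, since testing that same equation with $w=1$ already gives $\xdual{E_h}{1}=0$, i.e.\ $E_h\in V_h$.
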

Observe that the time evolution of global mass and energy is similar to the continuous case, whereas the global momentum is more dissipated over time.   

Then, we provide slightly different results when the discontinuous Galerkin method \eqref{eq:vpfp1d_hermite_semidiscrete_riesz}-\eqref{eq:ops_bh_riesz} is coupled with the Raviart-Thomas method \eqref{eq:ops_bh_rt} for discretization of the Poisson equation. 

\begin{proposition} 
\label{prop2:disc:inv} Let $T_0>0$ and $\tau_0>0$ be fixed  in \eqref{eq:vpfp1d_hermite_semidiscrete_riesz}. The solution $(D_h,E_h)$ to \eqref{eq:vpfp1d_hermite_semidiscrete_riesz}-\eqref{eq:ops_bh_riesz} with $b_h$ given by \eqref{eq:ops_bh_rt}  possesses the following discrete properties :
\begin{enumerate}[label=(\roman*), wide]
    \item conservation of the total mass 
    $$
    m_0(t)\,=\,m_0(0)\,,\quad \forall\, t\,\geq\, 0\,;
    $$
    \item dissipation of the total momentum,  
    \begin{equation*} 
    \frac{\dd m_1}{\dd t}(t) \,=\, -\frac{m_1}{\tau_0}(t)\,,
        \quad \forall\, t\in \mathbb{R}^+\,.
    \end{equation*}
\end{enumerate}
\end{proposition}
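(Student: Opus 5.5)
For both items we test the equations of \eqref{eq:vpfp1d_hermite_semidiscrete_vfp} with the constant function $1\in U_h^m$. This is admissible since $U_h^m$ contains constants.

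\emph{Mass conservation.} Take $u_0=1$ in the first line. Then $\frac{\dd}{\dd t}m_0=\xahadjbimap{D_{h,1}}{1}$. In \eqref{eq:ops_ah_global} both the jump terms $\jmp{1}_{j-\half}$ and the volume terms $\partial_x 1$ vanish, so $\xahadjbimap{D}{1}=0$ for every $D$. Hence $m_0$ is constant. Equivalently, by \eqref{ah:dual} and \eqref{ah:kernel}, $\xahadjbimap{D_{h,1}}{1}=\xahbimap{1}{D_{h,1}}=0$. Nothing here depends on the choice of $b_h$.

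\emph{Momentum.} Take $u_1=1$ in the second line of \eqref{eq:vpfp1d_hermite_semidiscrete_vfp}. By the same argument,
\[
\xahbimap{D_{h,0}}{1}=\xahadjbimap{D_{h,2}}{1}=\xahbimap{\Phi_h}{1}=0,
\]
since each of these forms is tested against a constant. What remains is
\[
\frac{1}{\sqrt{T_0}}\frac{\dd m_1}{\dd t}+\frac{1}{\tau_0}\frac{m_1}{\sqrt{T_0}}=\frac{1}{\sqrt{T_0}}\xhdual{E_h}{D_{h,0}-D_{\infty,0}} .
\]
The claim therefore reduces to showing $\xhdual{E_h}{D_{h,0}-D_{\infty,0}}=0$ for the Raviart--Thomas discretization. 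This step is the crux; in the LDG case this term only has a sign, which is why Proposition~\ref{prop1:disc:inv} gives an inequality there.

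For Raviart--Thomas, $W_h=U_h^{m+1}\cap C^0(\mbT)$ and $\mathcal{B}_h^*E_h$ is the projection of $-\partial_x E_h$ onto $V_h$. We have $\partial_x E_h\in U_h^m$ and $\int_\mbT\partial_x E_h=0$ because $E_h$ is continuous and periodic, so $\partial_xE_h\in V_h$. Consequently the second line of \eqref{eq:vpfp1d_hermite_semidiscrete_poisson}, combined with quasi-neutrality (which lets us test with all of $U_h^m$ after adding constants), yields the strong identity $\partial_x E_h=D_{h,0}-D_{\infty,0}$ pointwise. Therefore
\[
\xhdual{E_h}{D_{h,0}-D_{\infty,0}}=\int_\mbT E_h\,\partial_xE_h\,\dd x=\tfrac12\int_\mbT\partial_x(E_h^2)\,\dd x=0,
\]
using again continuity and periodicity of $E_h$. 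This gives $\frac{\dd m_1}{\dd t}=-m_1/\tau_0$ exactly.

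The only subtle point is the strong identification of $\partial_xE_h$ with $D_{h,0}-D_{\infty,0}$. It requires both $\partial_xE_h\in V_h$ and the mean-zero property of $D_{h,0}-D_{\infty,0}$. The latter is propagated by the mass conservation of item (i) from the quasi-neutral initial data.
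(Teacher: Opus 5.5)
Your proof is correct and follows essentially the same route as the paper. Testing with constants reduces the momentum balance to $\langle E_h, D_{h,0}-D_{\infty,0}\rangle_{L^2(\mathbb{T})}$, and since $\partial_x E_h\in V_h$ in the Raviart--Thomas case, $-\mathcal{B}_h^*E_h=\partial_x E_h=D_{h,0}-D_{\infty,0}$, so this term equals $\int_{\mathbb{T}}E_h\,\partial_xE_h\,\mathrm{d}x=0$ by continuity and periodicity; your remark that quasi-neutrality is propagated by mass conservation, which gives $D_{h,0}-D_{\infty,0}\in V_h$, makes explicit a point the paper leaves implicit.
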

Here for the Raviart-Thomas case, mass and momentum are identical to the continuous case, but notice that we do not control the evolution of the total energy.

The proof of these properties will be detailed in Appendix A.

\subsection{Preliminary results}
\label{sec:preliminary_results}

To investigate the structural properties of the discrete operators arising in the semi-discrete scheme \eqref{eq:vpfp1d_hermite_semidiscrete_riesz}, we introduce the standard discontinuous Galerkin functional setting. Instead of the classical Sobolev space $H^1(\mbT)$, we consider the broken Sobolev space $H^1(\mcT_h)$ defined by 
\begin{equation*} 
    H^1(\mcT_h) \,\defeq\; \set{u \in L^2(\mbT) \setmid \restr{u}{K_j} \in H^1(K_j) \qc \forall j \in \mcJ},
\end{equation*}
which is equipped with the following seminorm:
\begin{equation*} 
    \xdhseminorm{u} \,\defeq\, \left(\xhnorm{\partial_hu}^2 + \xjhseminorm{u}^2\right)^{\frac{1}{2}},
\end{equation*}
where $\partial_h u$ denotes the broken derivative of $u$ and is defined by
\begin{equation*} 
    \restr{(\partial_h u)}{K_j} \,\defeq\, \partial_xu \qc \forall j \in \mcJ,
\end{equation*} 
and the jump seminorm $\xjhseminorm{\cdot}$ is given by
\begin{equation*} 
    \xjhseminorm{u} \,=\, \left(\sum_{j\in\mcJ} h_{j-\half}^{-1}\jmp{u}_{j-\half}^2\right)^{\half} \qc h_{j-\half} = \min(h_{j-1},h_j).
\end{equation*}
The seminorm $\xdhseminorm{\cdot}$ induces a norm on the subspace of $H^1(\mcT_h)$ consisting of functions satisfying $\xhdual{u}{1} = 0$, in which we will establish the coercivity of the discrete operators. \\

We then introduce the local trace inequalities coming from \cite{reyna_operator_2015}, which are repeatedly used throughout our analysis. Given $j \in \mcJ$ and $u \in \mathscr{P}_m(K_j)$, there exists a positive constant $C_{tr} = C_{tr}(m)$ such that 
\begin{equation} \label{eq:trace_inequality}
    \xKnorm[\partial K_j]{u} \le C_{tr} h_j^{-\half} \xKnorm[K_j]{u} \qc \xKnorm[\partial K_j]{u}^2 = |u(x_{j-\half}^+)|^2 + |u(x_{j+\half}^-)|^2.
\end{equation}
In particular, one may take $C_{tr}(m) = m+1$. 

We now turn to the key properties of the discrete operators $\mathcal{A}_h$ and $\mathcal{A}_h^*$, collected in Lemmas \ref{lem:ops_ah_props} and \ref{lem:ops_ah_discrete_regularity_estimate}. For clarity of notations, we write $A \lesssim B$ to mean that $A \le CB$ for some $C > 0$, independent of the mesh size $h$, such that $A \le CB$. Moreover, $A \sim B$ means both $A \lesssim B$ and $A \lesssim B$. 
\begin{lemma} \label{lem:ops_ah_props}
    Let $(\mathcal{A}_h,\mathcal{A}_h^*)$ be the discrete operators associated with the bilinear forms $(a_h,a_h^*)$, defined by \eqref{eq:ops_ah_global} with alternating fluxes.
    It holds:
    \begin{enumerate}[label=(\roman*), wide]
        \item \label{enum:ops_ah_discrete_primal_dual_balance} preservation of the primal-dual balance. For all $u \in U_h^m$, we have
        \begin{equation} \label{eq:ops_ah_discrete_primal_dual_balance}
        	\xhnorm{\mathcal{A}_hu} \sim \xhnorm{\mathcal{A}_h^*u}\sim\xdhseminorm{u}\,,
        \end{equation}
        \item \label{enum:ops_ah_discrete_coercivity} preservation of the coercivity. For all $u \in U_h^m$ such that $\xhdual{u}{1} = 0$, we have
        \begin{equation} \label{eq:ops_ah_discrete_coercivity}
               \|u\|_{L^2(\mathbb{T})} \lesssim \xhnorm{\mathcal{A}_h u}\,.
                    \end{equation}
        The implicit constants depend only on $T_0$, the degree $m$, and the mesh parameter $C_{qu}$. 
    \end{enumerate}
\end{lemma}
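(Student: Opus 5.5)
I will first make $\mathcal{A}_h$ explicit by integrating by parts element by element. Take the flux $\hat g_a(D)=D^-$; the other choice is symmetric. Summing over $j$ with periodicity, the definition \eqref{eq:ops_ah_global} becomes
\begin{equation*}
\xhdual{\mathcal{A}_h D}{u} = \sqrt{T_0}\Bigl(\sum_{j\in\mcJ}\xKdual[K_j]{\partial_x D}{u} + \sum_{j\in\mcJ}\jmp{D}_{j-\half}\,u^+_{j-\half}\Bigr).
\end{equation*}
So on each cell $\mathcal{A}_h D = \sqrt{T_0}\,(\partial_h D + L_h\jmp{D})$. Here $L_h$ is the lifting operator: it maps the jump at $x_{j-\half}$ to the polynomial $\ell_j\in\mathscr P_m(K_j)$ defined by $\xKdual[K_j]{\ell_j}{u}=\jmp{D}_{j-\half}u(x_{j-\half}^+)$ for all $u\in\mathscr P_m(K_j)$. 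By \eqref{eq:ops_ah_discrete_duality}, $\mathcal{A}_h^*$ has the same form with $-\partial_h$ and a lifting to the other endpoint.

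For the upper bounds in \ref{enum:ops_ah_discrete_primal_dual_balance}, I will estimate the lifting with the trace inequality \eqref{eq:trace_inequality}. This gives $\xKnorm[K_j]{\ell_j}\le C_{tr}h_j^{-\half}|\jmp{D}_{j-\half}|$. Combining this with quasi-uniformity ($h_{j-\half}\sim h_j$), I get $\xhnorm{\mathcal{A}_hD}\lesssim \xdhseminorm{D}$, and likewise for $\mathcal{A}_h^*$.

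The reverse bound $\xdhseminorm{D}\lesssim\xhnorm{\mathcal{A}_hD}$ is the main obstacle. The broken derivative and the lifting could in principle cancel, so I will work locally on the reference cell by a scaling argument. Consider the map $(p,c)\mapsto p'+c\,\hat\ell$ on $\mathscr P_m(\hat K)\times\mbR$, where $\hat\ell$ is the lifting of the value $c$ at the left endpoint. First I must show it is injective modulo constants. If $p'+c\hat\ell=0$, testing with the degree-$m$ polynomial $u$ that is $L^2$-orthogonal to $\mathscr P_{m-1}$ and satisfies $u(\hat x_L)\ne0$ gives $c\,u(\hat x_L)=0$, hence $c=0$ and $p'=0$. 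Finite dimensionality then yields $\|p'\|+|c|\lesssim\|p'+c\hat\ell\|$ with a constant depending only on $m$. Rescaling to $K_j$ gives
\begin{equation*}
\xKnorm[K_j]{\partial_xD}+h_j^{-\half}|\jmp{D}_{j-\half}|\lesssim \xKnorm[K_j]{\mathcal{A}_hD}.
\end{equation*}
Squaring and summing over $j$, with quasi-uniformity, gives $\xdhseminorm{D}\lesssim\xhnorm{\mathcal{A}_hD}$. The same argument at the right endpoint handles $\mathcal{A}_h^*$, and this establishes all equivalences in \eqref{eq:ops_ah_discrete_primal_dual_balance}.

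For the coercivity \ref{enum:ops_ah_discrete_coercivity}, it then suffices to prove a discrete Poincaré inequality $\xhnorm{u}\lesssim\xdhseminorm{u}$ for $u\in U_h^m$ with zero mean. I will do this by duality. Let $\phi$ be the $\mbT$-periodic solution of $-\phi''=u$ with zero mean; then $\|\phi'\|_{H^1}\lesssim\xhnorm{u}$ with a constant depending on $\abs{\mbT}$. Write $\xhnorm{u}^2=\xhdual{u}{-\phi''}$ and integrate by parts cell by cell:
\begin{equation*}
\xhnorm{u}^2=\sum_j\xKdual[K_j]{\partial_xu}{\phi'}+\sum_j\jmp{u}_{j-\half}\phi'(x_{j-\half}).
\end{equation*}
The first sum is bounded by Cauchy-Schwarz. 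For the second, use the sup bound $|\phi'(x)|\lesssim\|\phi'\|_{H^1}$ and $\sum_j|\jmp{u}_{j-\half}|\le\xjhseminorm{u}\bigl(\sum_j h_{j-\half}\bigr)^{\half}\lesssim\abs{\mbT}^{\half}\xjhseminorm{u}$. Altogether $\xhnorm{u}^2\lesssim\xdhseminorm{u}\,\xhnorm{u}$. Combined with \ref{enum:ops_ah_discrete_primal_dual_balance}, this gives \eqref{eq:ops_ah_discrete_coercivity}, with constants depending on $T_0$, $m$ and $C_{qu}$, and in the Poincaré step also on $\abs{\mbT}$.
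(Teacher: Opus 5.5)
Your argument is correct, but it reaches the two nontrivial inequalities by a different route than the paper.

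For the reverse bound $\xdhseminorm{u}\lesssim\xhnorm{\mathcal{A}_hu}$, the paper follows \cite{wang_stability_2015}. On each cell it builds the explicit test function $r=\partial_xu-\pi_j^+\partial_xu$, where $\pi_j^+$ is a multiple of the degree-$m$ Legendre polynomial matching the left trace. Since $r(x_{j-\frac12}^+)=0$, the jump term drops out and $(\mathcal{A}_hu,r)_{L^2(K_j)}=\sqrt{T_0}\,\|\partial_xu\|_{L^2(K_j)}^2$. Testing with $1_{K_j}$ then recovers $[u]_{j-\frac12}$. You instead write $\mathcal{A}_h$ as broken derivative plus a lifting, prove injectivity modulo constants on the reference cell, and conclude by finite-dimensional norm equivalence and scaling. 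The underlying fact is the same in both: the degree-$m$ Legendre polynomial is orthogonal to $\mathscr P_{m-1}$ and does not vanish at the endpoint. The paper's construction gives explicit constants in terms of $C_{tr}(m)$ and $C_{ip}(m)$. Yours is shorter and controls $\partial_hu$ and the jump in one stroke, but leaves $C_m$ non-explicit.

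For (ii), the paper simply invokes a broken Poincar\'e--Wirtinger inequality on $H^1(\mathcal{T}_h)$. You prove it by duality with the periodic Poisson problem instead. This is self-contained and needs no quasi-uniformity, only $\sum_j h_{j-\frac12}\le|\mathbb{T}|$. It also correctly exposes the dependence of the constant on $|\mathbb{T}|$. That dependence is unavoidable by scaling, yet it is missing from the list of dependencies in the statement of the lemma; it does appear in Lemma~\ref{lem:ops_ah_discrete_regularity_estimate}.

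As a minor aside, your lifting bound for the upper estimate only needs $h_{j-\frac12}\le h_j$. So $C_{qu}$ is required only in the reverse direction, not in the step where you invoked it.
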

\begin{proof} \label{proof:ops_ah_discrete_duality_and_kernel}
We first prove item \ref{enum:ops_ah_discrete_primal_dual_balance} in the case where $\hat{g}_a(u) = u^-$ in \eqref{eq:ops_ah_global}.  On the one hand,  using the trace inequality \eqref{eq:trace_inequality} and the Cauchy-Schwarz inequality, we get that for any $u\in U_h^m$,
\begin{equation*}
	\begin{aligned}
		\xhnorm{\mathcal{A}_hu}^2 \,=\,a_h(u, \mathcal{A}_hu)
        \,=\,a^*_h(\mathcal{A}_hu, u)
		&= \sqrt{T_0}\left(\sum_{j\in\mcJ} \xKdual[K_j]{\partial_x u}{\mathcal{A}_hu} + \sum_{j\in\mcJ} \jmp{u}_{j-\half}(\mathcal{A}_hu)_{j-\half}^+\right)\\[-1.em]
		\\ 
		&\le \sqrt{T_0}\,\left(\xhnorm{\partial_xu} \,+\, C_{qu}\, C_{tr}(m)\,\xjhseminorm{u}\right)\,\xhnorm{\mathcal{A}_hu} 
		\\[.5em]
		&\le \sqrt{T_0}\,\max\left(1,C_{qu} \,C_{tr}(m)\right)\,\xdhseminorm{u}\,\xhnorm{\mathcal{A}_hu}\,,
	\end{aligned}
\end{equation*}
which yields the first inequality
$$
\xhnorm{\mathcal{A}_hu} \,\lesssim \,\xdhseminorm{u}.
$$
On the other hand, for $u\in U_h^m$, we show the revert estimate $\xdhseminorm{u} \lesssim \xhnorm{\mathcal{A}_hu} $ following the lines of \cite[Lemma $2.4$]{wang_stability_2015}. Let $L_m(\hat{x})$ be the standard Legendre polynomial of degree $m$ on $\hat{K} = [-1, 1]$. We define the local interpolant $\pi_j^+$ for any $j \in \mcJ$ as follows:  for a continuous function $v$ defined on $K_j$ 
\begin{equation*}
    \pi_j^+ v(x) = (-1)^m v(x_{j-\half}^+)\,L_m(\hat{x}),
\end{equation*}
with $\hat{x} = 2(x-x_j)/h_j$ and $x_j = (x_{j-\half} + x_{j+\half})/2$. Using the trace inequality \eqref{eq:trace_inequality}, we obtain 
\begin{equation}
\label{estim:pi:j}
    \xKnorm[K_j]{\pi_j^+ v} \,=\, \sqrt{\frac{h_j}{2}}\,\xKnorm[\hat{K}]{L_m}\,|v(x_{j-\half}^+)| \,\le\, C_{ip}(m)\,\xKnorm[K_j]{v} \,,
\end{equation}
where $C_{ip}(m) = C_{tr}(m)/\sqrt{2m+1}$. We then define the test function $r\in U^m_h$ for a function $u \in U_h^m$ as 
\[
r(x)\,=\, \left\{
\begin{array}{l}
	 \left(\partial_xu - \pi_j^+\partial_xu\right)(x)\,, \,\textrm{ for }\,x\in K_j\,,\\[0.8em]
	0,\,\textrm{ elsewhere}\,.
\end{array}
\right.
\]
Since $L_m(-1) = (-1)^m$ and since $\xKdual[K_j]{\partial_x u}{     \pi_j^+\partial_x u} \,=\, 0$ due to the orthogonality property of Legendre polynomials, it holds
\begin{equation*}
    r(x_{j-\half}^+) = 0 \qand \xKdual[K_j]{\partial_x u}{r} = \xKnorm[K_j]{\partial_xu}^2.
\end{equation*}
Furthermore, since $r$ is supported on $K_j$, $\mathcal{A}_hu \in U^m_h$ and using \eqref{eq:ops_ah_global}, we have
\[
\xKdual[K_j]{\mathcal{A}_hu}{r} \,=\,
\xKdual[\mathbb{T}]{\mathcal{A}_hu}{r}\,=\,
a^*_h(r,\mathcal{A}_hu)\,=\,
\sqrt{T_0}\xKdual[K_j]{\partial_x u}{r} = \sqrt{T_0}\xKnorm[K_j]{\partial_xu}^2\,.
\]
We apply the Cauchy-Schwarz inequality in the left hand side of the previous relation and bound $\|r\|_{L^2(K_j)}$ thanks to \eqref{estim:pi:j}
\begin{equation*}
\begin{aligned}
    \sqrt{T_0}\xKnorm[K_j]{\partial_x u}^2 = \xKdual[K_j]{\mathcal{A}_hu}{r} 
    &\le \left(1+C_{ip}(m)\right)\xKnorm[K_j]{\mathcal{A}_hu}\xKnorm[K_j]{\partial_xu}.
\end{aligned}
\end{equation*}
This implies 
\begin{equation}\label{interm:estimate}
		\xKnorm[K_j]{\partial_x u}
		\le \sqrt{\frac{1}{T_0}}\left(1+C_{ip}(m)\right)\xKnorm[K_j]{\mathcal{A}_hu}.
\end{equation}
In addition, we evaluate $a_h(u, 1_{K_j})$ according to \eqref{eq:ops_ah_global} where $1_{K_j}$ is the indicator function of $K_j$ and $\hat{g}_a(u) = u^-$ to get
\begin{equation*}
    \jmp{u}_{j-\half} = \sqrt{\frac{1}{T_0}}\xKdual[K_j]{\mathcal{A}_hu}{1} - \xKdual[K_j]{\partial_x u}{1}. 
\end{equation*}
By the Cauchy-Schwarz inequality and \eqref{interm:estimate}, we deduce
\begin{equation}
\label{jumpy}
    \jmp{u}_{j-\half} \le h_j^{\half}\left(\sqrt{\frac{1}{T_0}}\xKnorm[K_j]{\mathcal{A}_hu} + \xKnorm[K_j]{\partial_x u}\right) \le \sqrt{\frac{C_{qu}}{T_0}}\,h_{j-\half}^{\half}\,(2 + C_{ip}(m))\xKnorm[K_j]{\mathcal{A}_hu} .
\end{equation}
Finally, we sum the square of \eqref{interm:estimate} and \eqref{jumpy} over $j \in \mcJ$, which yields
\begin{equation*}
    \xdhseminorm{u} = \left(\sum_{j\in\mcJ}\xKnorm[K_j]{\partial_x u}^2 + \sum_{j\in\mcJ}h_{j-\half}^{-1}\jmp{u}_{j-\half}^2\right)^{\half} \le \frac{C}{\sqrt{T_0}}\xhnorm{\mathcal{A}_hu}\,,
\end{equation*}
where $C>0$ depends on $C_{qu}$ and $C_{ip}(m)$.
This implies that $\xhnorm{\mathcal{A}_hu}\sim \xdhseminorm{u}$ in the case where $\hat{g}_a(u) = u^-$. 
We obtain the same estimate in the case where $\hat{g}_a(u) = u^+$ following the same computations.

\vspace{1em}

Now, for item \ref{enum:ops_ah_discrete_coercivity}, we apply the Poincar{\'e}-Wirtinger inequality on $H^{1}(\mcT_h)$ for any $u \in U_h^m$ satisfying $\xhdual{u}{1} = 0$ and then item \ref{enum:ops_ah_discrete_primal_dual_balance}, which yields 
\begin{equation*}
		\xhnorm{u} \,\lesssim\, \xdhseminorm{u} \,\lesssim\, \xhnorm{\mathcal{A}_hu} \,.
\end{equation*}
\end{proof}

Thanks to Lemma \ref{lem:ops_ah_props}, we prove that the solutions to the elliptic problem \eqref{eq:ops_ah_elliptic} associated to the relative entropy functional $\mathcal{H}_h$ in \eqref{eq:H0} enjoys the following properties.
\begin{lemma}\label{lem:ops_ah_discrete_regularity_estimate}
There exists a unique solution 
$(F_h,\Psi_h) \in U_h^m\times U_h^m$ to \eqref{eq:ops_ah_elliptic} which satisfies:
\begin{enumerate}[label=(\roman*), wide]
    \item \label{enum:discrete_H1_regularity_estimate} discrete $H^1$ elliptic regularity. 
    \begin{equation} \label{eq:ops_ah_discrete_H1_regularity_estimate}
        \xhnorm{F_h} \lesssim \xhnorm{D_{h,0}-D_{\infty,0}},
    \end{equation}
    \item \label{enum:discrete_H2_regularity_estimate} discrete $H^2$ elliptic regularity.
    \begin{equation} \label{eq:ops_ah_discrete_H2_regularity_estimate}
        \xhnorm{\mathcal{A}_h F_h} \lesssim \xhnorm{D_{h,0}-D_{\infty,0}}.
    \end{equation}
\end{enumerate}
The implicit constants in the estimates above depend only on the temperature $T_0$, the domain length $\abs{\mbT}$, the polynomial degree $m$, and the mesh quasi-uniformity constant $C_{qu}$. 
\end{lemma}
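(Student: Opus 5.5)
The plan is to treat \eqref{eq:ops_ah_elliptic} as the discrete Poisson problem $\mathcal{A}_h^*\mathcal{A}_h\Psi_h = D_{h,0}-D_{\infty,0}$ on the zero-mean subspace $V_h=\{v\in U_h^m\,|\,\xhdual{v}{1}=0\}$, and to get both estimates from Lemma~\ref{lem:ops_ah_props} together with an energy argument.

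\emph{Existence and uniqueness.} On $V_h$, consider the bilinear form $(\Psi,v)\mapsto \xhdual{\mathcal{A}_h\Psi}{\mathcal{A}_h v}$. By the coercivity \eqref{eq:ops_ah_discrete_coercivity} it is coercive on $V_h$, so the Lax--Milgram theorem in finite dimension gives a unique $\Psi_h\in V_h$ with
\[
\xhdual{\mathcal{A}_h\Psi_h}{\mathcal{A}_h v}=\xhdual{D_{h,0}-D_{\infty,0}}{v}\quad\text{for all } v\in V_h .
\]
We then extend this identity to all $v\in U_h^m$. The constants satisfy $\mathcal{A}_h 1=0$ by \eqref{eq:ops_ah_discrete_kernel}, and $\xhdual{D_{h,0}-D_{\infty,0}}{1}=0$ by quasi-neutrality. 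Setting $F_h=-\mathcal{A}_h\Psi_h$ and using the duality \eqref{eq:ops_ah_discrete_duality}, the extended identity reads $-\mathcal{A}_h^*F_h=D_{h,0}-D_{\infty,0}$ in $U_h^m$.

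For uniqueness, take a difference of two solutions $\Psi$. It satisfies $\mathcal{A}_h^*\mathcal{A}_h\Psi=0$, so $\xhnorm{\mathcal{A}_h\Psi}^2=0$, and the coercivity \eqref{eq:ops_ah_discrete_coercivity} gives $\Psi=0$.

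\emph{Step (i).} Test the second equation with $\Psi_h$:
\[
\xhnorm{F_h}^2=\xhdual{F_h}{-\mathcal{A}_h\Psi_h}=-\xhdual{\mathcal{A}_h^*F_h}{\Psi_h}=\xhdual{D_{h,0}-D_{\infty,0}}{\Psi_h}\le \xhnorm{D_{h,0}-D_{\infty,0}}\,\xhnorm{\Psi_h}.
\]
By \eqref{eq:ops_ah_discrete_coercivity}, $\xhnorm{\Psi_h}\lesssim\xhnorm{\mathcal{A}_h\Psi_h}=\xhnorm{F_h}$. Dividing through gives \eqref{eq:ops_ah_discrete_H1_regularity_estimate}.

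\emph{Step (ii).} This is the main obstacle: we must bound $\mathcal{A}_h F_h$ knowing only $\mathcal{A}_h^*F_h$. The primal-dual balance \eqref{eq:ops_ah_discrete_primal_dual_balance} handles it, since
\[
\xhnorm{\mathcal{A}_hF_h}\sim\xdhseminorm{F_h}\sim\xhnorm{\mathcal{A}_h^*F_h}=\xhnorm{D_{h,0}-D_{\infty,0}}.
\]
No mean-zero condition on $F_h$ is needed here, because the balance holds for every $u\in U_h^m$. The substantive work, the equivalence with the broken $H^1$ seminorm for both flux orientations, was already carried out in Lemma~\ref{lem:ops_ah_props}. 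What remains is to check that its constants depend only on $T_0$, $m$ and $C_{qu}$, with the Poincaré constant contributing the dependence on $\abs{\mbT}$.
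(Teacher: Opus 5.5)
Your proposal is correct and follows the paper's proof essentially step for step. Both use Lax--Milgram via the coercivity of Lemma~\ref{lem:ops_ah_props} for existence. For (i), both use the energy identity $\xhnorm{F_h}^2=\xhdual{D_{h,0}-D_{\infty,0}}{\Psi_h}$ together with coercivity, and for (ii) the primal-dual balance $\xhnorm{\mathcal{A}_hF_h}\sim\xhnorm{\mathcal{A}_h^*F_h}=\xhnorm{D_{h,0}-D_{\infty,0}}$. Your extra care in the existence step, solving on the zero-mean subspace and extending to all of $U_h^m$ via $\mathcal{A}_h 1=0$ and quasi-neutrality, fills in a detail the paper leaves implicit.
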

\begin{proof}
	On the one hand, the Lax-Milgram theorem applies here thanks to the coercivity property \eqref{eq:ops_ah_discrete_coercivity}. Hence, \eqref{eq:ops_ah_elliptic} admits a unique solution $(F_h,\Psi_h) \in U_h^m\times U_h^m$.\\
On the other hand, the duality property in \eqref{eq:ops_ah_discrete_duality}, the Cauchy-Schwarz inequality and the coercivity property \eqref{eq:ops_ah_discrete_coercivity} in Lemma \ref{lem:ops_ah_props} yield item \ref{enum:discrete_H1_regularity_estimate}, that is, 
\begin{eqnarray*}
    \xhnorm{F_h}^2  &=& \xhdual{\mathcal{A}_h^*\mathcal{A}_h\Psi_h}{\Psi_h} \\[0.5em]
    &=& \xhdual{D_{h,0}-D_{\infty,0}}{\Psi_h}\\[0.5em]
    & \le& \xhnorm{D_{h,0}-D_{\infty,0}}\xhnorm{\Psi_h} \\[0.5em]
&\lesssim& \xhnorm{D_{h,0}-D_{\infty,0}} \xhnorm{F_h }.     
\end{eqnarray*}

We next prove item \ref{enum:discrete_H2_regularity_estimate} applying   \eqref{eq:ops_ah_discrete_primal_dual_balance} in Lemma \ref{lem:ops_ah_props}
\begin{equation*}
    \xhnorm{\mathcal{A}_h F_h} \,\sim\, \xhnorm{\mathcal{A}_h^*F_h} \,=\, 
    \xhnorm{D_{h,0}-D_{\infty,0}}.    
\end{equation*} 
\end{proof}

We finish this section with Lemma \ref{lem:ops_bh_props}, which gathers the regularity properties of the solution to the discrete Poisson problem corresponding to the fourth line in \eqref{eq:vpfp1d_hermite_semidiscrete_riesz}:
find $(\Phi_h,E_h)\in V_h\times W_h$ such that
\begin{equation} \label{eq:ops_bh_elliptic}
    E_h = -\mathcal{B}_h \Phi_h \qc -\mathcal{B}_h^* E_h = D_{h,0}-D_{\infty,0}\,,
\end{equation}
The finite element spaces $(V_h, W_h)$ are defined according to the choice of $(\mathcal{B}_h,\mathcal{B}_h^*)$ in  Section \ref{sec:disc_poisson}. 

\begin{lemma} \label{lem:ops_bh_props}
    The following properties hold true:
    \begin{enumerate}[label=(\roman*), wide]
        \item \label{enum:ops_bh_poincare_wirtinger} $H^1$ regularity. The solution $(\Phi_h, E_h)$  to \eqref{eq:ops_bh_elliptic} satisfies
        \begin{equation} \label{eq:ops_bh_poincare_wirtinger}
             \xhnorm{E_h}\lesssim\xhnorm{D_{h,0}-D_{\infty,0}} \,,
        \end{equation}
        \item \label{enum:ops_bh_electric_regularity_estimate} discrete elliptic regularity in $L^\infty$. The solution $(\Phi_h, E_h)$  to \eqref{eq:ops_bh_elliptic} satisfies
        \begin{equation} \label{eq:ops_bh_electric_regularity_estimate}
            \xhnorm[\infty]{E_h} \lesssim \xhnorm{D_{h,0}-D_{\infty,0}}.
        \end{equation}
    \end{enumerate}
    The implicit constants in the estimates above  depend only on the domain length $\abs{\mbT}$, the polynomial degree $m$, and the mesh quasi-uniformity constant $C_{qu}$. 
\end{lemma}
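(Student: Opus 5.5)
We treat the two choices of $(\mathcal{B}_h,\mathcal{B}_h^*)$ in parallel, using the same template as for Lemmas \ref{lem:ops_ah_props}--\ref{lem:ops_ah_discrete_regularity_estimate}. The plan has four steps: an energy identity, a coercivity bound on $\mathcal{B}_h$, a broken $H^1$ bound on $E_h$, and a one-dimensional Sobolev argument for $L^\infty$.

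\emph{Energy identity.} We test the second relation of \eqref{eq:ops_bh_elliptic} with $\Phi_h\in V_h$ and use the duality \eqref{eq:ops_bh_discrete_duality}:
\[
\xhnorm{E_h}^2 = -\xhdual{\mathcal{B}_h\Phi_h}{E_h} = -\xhdual{\Phi_h}{\mathcal{B}_h^*E_h} = \xhdual{\Phi_h}{D_{h,0}-D_{\infty,0}} \le \xhnorm{\Phi_h}\,\xhnorm{D_{h,0}-D_{\infty,0}}.
\]
Item \ref{enum:ops_bh_poincare_wirtinger} then follows from a coercivity bound $\xhnorm{\Phi_h}\lesssim \xhnorm{\mathcal{B}_h\Phi_h}=\xhnorm{E_h}$ for $\Phi_h\in V_h$. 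Existence and uniqueness also follow from this bound, since the map $\Phi_h\mapsto \mathcal{B}_h^*\mathcal{B}_h\Phi_h$ is then injective on $V_h$.

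\emph{Coercivity of $\mathcal{B}_h$.} In the LDG case, $b_h$ coincides with $a_h/\sqrt{T_0}$, with the same alternating fluxes. Hence $\mathcal{B}_h=\mathcal{A}_h/\sqrt{T_0}$ on $V_h$, and Lemma \ref{lem:ops_ah_props} gives both the coercivity and $\xhnorm{\mathcal{B}_h\Phi}\sim\xdhseminorm{\Phi}$. In the Raviart--Thomas case, $\mathcal{B}_h\Phi$ is the $L^2$ projection onto $W_h=U_h^{m+1}\cap C^0$ of the weak derivative of $\Phi$. Here I plan to show coercivity through a discrete inf-sup condition. Given $v\in V_h$, I take $w\in W_h$ to be the continuous primitive of $v$ with zero mean; it is periodic because $\xhdual{v}{1}=0$. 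Then $-\sum_j\xKdual[K_j]{\Phi}{\partial_x w}=-\xhdual{\Phi}{v}$. Choosing $v=\Phi$ gives $\xhnorm{\Phi}^2\le \xhnorm{\mathcal{B}_h\Phi}\,\xhnorm{w}\lesssim \xhnorm{\mathcal{B}_h\Phi}\,\xhnorm{\Phi}$, where the last step is Poincaré--Wirtinger for $w$.

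\emph{$L^\infty$ bound.} In one dimension this reduces to controlling a broken $H^1$ norm of $E_h$. In the RT case, $E_h\in H^1(\mbT)$ is continuous and $-\mathcal{B}_h^*E_h=\partial_xE_h$ exactly, because $\partial_xE_h\in U_h^m$ has zero mean. Hence $\partial_x E_h=D_{h,0}-D_{\infty,0}$. Moreover $\xhdual{E_h}{1}=\xhdual{E_h}{1}_{\text{tested with }w=1}=-b_h(\Phi_h,1)=0$. The Sobolev bound $\|E_h\|_\infty\le |\mbT|^{-1/2}\xhnorm{E_h}+|\mbT|^{1/2}\xhnorm{\partial_xE_h}$ then concludes. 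In the LDG case, $-\mathcal{B}_h^*E_h=\mathcal{A}_h^*E_h/\sqrt{T_0}$, and Lemma \ref{lem:ops_ah_props} gives $\xdhseminorm{E_h}\lesssim\xhnorm{\mathcal{A}_h^*E_h}\lesssim\xhnorm{D_{h,0}-D_{\infty,0}}$. For a broken function $u\in H^1(\mcT_h)$, I would use
\[
\xhnorm[\infty]{u}\le |\mbT|^{-1/2}\xhnorm{u} + |\mbT|^{1/2}\xhnorm{\partial_hu}+\sum_j|\jmp{u}_{j-\half}|,
\]
followed by Cauchy--Schwarz: $\sum_j|\jmp{u}|\le (\sum_j h_{j-\half})^{1/2}\xjhseminorm{u}\lesssim |\mbT|^{1/2}\xjhseminorm{u}$, where the quasi-uniformity constant $C_{qu}$ enters through $h_{j-\half}=\min(h_{j-1},h_j)$. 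Combined with item \ref{enum:ops_bh_poincare_wirtinger}, this yields \eqref{eq:ops_bh_electric_regularity_estimate}.

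The main obstacle is this last broken Sobolev inequality. It requires the pointwise representation $u(x)-u(y)=\int_y^x\partial_hu+\sum(\text{jumps in between})$, averaged over $y$, and careful tracking of the constants through $C_{qu}$. A secondary check is the RT inf-sup step, in particular that the chosen primitive $w$ indeed belongs to $W_h$.
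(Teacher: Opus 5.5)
Your proposal is correct and follows essentially the paper's route. Item (i) comes from the energy identity $\xhnorm{E_h}^2=\xhdual{\Phi_h}{D_{h,0}-D_{\infty,0}}$ together with the coercivity $\xhnorm{\Phi_h}\lesssim\xhnorm{E_h}$ (Lemma~\ref{lem:ops_ah_props} for LDG; the zero-mean continuous primitive in $W_h$ plus Poincar\'e--Wirtinger for Raviart--Thomas). Item (ii) comes from bounding $\xdhseminorm{E_h}$ by $\xhnorm{D_{h,0}-D_{\infty,0}}$ and then applying a one-dimensional discrete Sobolev embedding.

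The only deviation is that you prove that embedding by hand, keeping the $L^2$ term and absorbing it with item (i). The paper instead cites \cite{dipietro_mathematical_2012} for the seminorm-only version, which on the torus relies on $E_h$ having zero mean, as it does here. Two small slips are harmless: the sign in $-\mathcal{B}_h^*E_h=\mathcal{A}_h^*E_h/\sqrt{T_0}$, and the unaddressed case where $a_h$ and $b_h$ use opposite fluxes, in which $\mathcal{B}_h=-\mathcal{A}_h^*/\sqrt{T_0}$ is still covered by Lemma~\ref{lem:ops_ah_props}.
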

\begin{proof}
    We prove item \ref{enum:ops_bh_poincare_wirtinger} for the Raviart-Thomas scheme only, since the discontinuous Galerkin case has been handled in Lemma \ref{lem:ops_ah_props}. In this case, there exists $\omega \in W_h = U_h^{m+1}\cap C^0(\mbT)$ such that  
    \begin{equation*}
        \partial_h \omega = \Phi_h \qand \xhdual{\omega}{1} = 0,
      \end{equation*}
     and therefore, we have
     \[
      \xhnorm{\Phi_h}^2 = \xhdual{\Phi_h}{\partial_h \omega}\,=\, -\,b_h(\Phi_h, \omega)
      \,,
     \]
     where $b_h$ is defined in \eqref{eq:ops_bh_rt}. Next, we use \eqref{eq:vpfp1d_hermite_semidiscrete_poisson} with $w= \omega$ and apply the Cauchy-Schwarz  inequality, leading to
      \[
     \xhnorm{\Phi_h}^2 \,=\,
     \xhdual{E_h}{\omega}
     \leq \xhnorm{E_h}\xhnorm{\omega}
     \,.
     \]
      Therefore, applying the Poincar{\'e}-Wirtinger on $\omega\in H^{1}(\mathbb{T})$ with $\xhdual{\omega}{1} = 0$, it yields that  $\|\omega\|_{L^2}\lesssim \|\partial_x \omega\|_{L^2}\,=\, \|\Phi_h\|_{L^2}$, hence we derive
    \begin{equation}\label{Poinc:Poiss}
 \xhnorm{\Phi_h}   \,\lesssim\, \xhnorm{E_h}.
    \end{equation}
  On the other hand, taking $w= E_h$ in \eqref{eq:vpfp1d_hermite_semidiscrete_poisson}, using the duality property \eqref{eq:ops_bh_discrete_duality} and the second equation in \eqref{eq:vpfp1d_hermite_semidiscrete_poisson},  we have 
 \begin{eqnarray*}
   \xhnorm{E_h}^2 &=&  -b_h (\Phi_h,E_h) = -b^*_h (E_h,\Phi_h) = \xhdual{\Phi_h}{D_{h,0}-D_{\infty,0}} \,.
    \end{eqnarray*}
    We apply the Cauchy-Schwarz inequality in the right hand side and \eqref{Poinc:Poiss}, which yields item \ref{enum:ops_bh_poincare_wirtinger}
\begin{eqnarray*}
	\xhnorm{E_h}
	&\lesssim&\xhnorm{D_{h,0}-D_{\infty,0}}.
\end{eqnarray*}

    We then prove item~\ref{enum:ops_bh_electric_regularity_estimate}. The discrete Sobolev embedding theorem (cf. Theorem~5.3 in~\cite{dipietro_mathematical_2012}) states that, in the one-dimensional case, the $L^\infty$ norm of a piecewise polynomial can be controlled by its discontinuous Galerkin norm. This means
    \begin{equation*}
        \xhnorm[\infty]{E_h} \lesssim \xdhseminorm{E_h}.
    \end{equation*}
    Therefore, it suffices to show that 
    \begin{equation*}
        \xdhseminorm{E_h} \lesssim \xhnorm{D_{h,0}-D_{\infty,0}}.
    \end{equation*}
    In the local discontinuous Galerkin case, $\mathcal{B}_h^*\,= \,\mathcal{A}_h^*\,/\,\sqrt{T_0}$ or $\mathcal{B}_h^*\,= -\,\mathcal{A}_h\,/\,\sqrt{T_0}$ and therefore we apply \eqref{eq:ops_ah_discrete_primal_dual_balance} in Lemma \ref{lem:ops_ah_props} and substitute $\mathcal{B}_h^* E_h$ thanks to \eqref{eq:ops_bh_elliptic}
    \begin{equation*}
    	\xdhseminorm{E_h} \sim 
        \xhnorm[2]{\mathcal{B}_h^* E_h}\,=\,
    	 \xhnorm{D_{h,0}-D_{\infty,0}}.
    \end{equation*}
    As for the Raviart-Thomas case, since $E_h \in C^0(\mbT)$, we have 
    \[\xdhseminorm{E_h} = \xhnorm{\partial_hE_h}\,=\,\xhnorm{D_{h,0}-D_{\infty,0}}\,. \] 
\end{proof}


\subsection{Proof of Theorem \ref{thm:vpfp1d_hermite_semidiscrete_main_theorem}}
\label{sec:proof_of_main_theorem}

Throughout this proof, we consider the solution 
$(D_h,E_h)$ to \eqref{eq:vpfp1d_hermite_semidiscrete_riesz}-\eqref{eq:ops_bh_riesz} as well as its associated modified relative entropy $\mathcal{H}_h(t)$ given in \eqref{eq:H0} and linearized energy functional $\mathcal{E}_h(t)$ given by \eqref{Eh} and defined for all $t\geq 0$. The proof proceeds in two main steps. The first step concerns the equivalence between the modified relative entropy $\mathcal{H}_h(t)$ and the energy functional $\mathcal{E}_h(t)$ associated to the linearized system \eqref{eq:vpfp1d_hermite_semidiscrete_riesz:linz}.
\begin{lemma} \label{lem:vpfp1d_hermite_semidiscrete_free_energy_equivalence}
Let $(D_h,E_h)$ be the solution to \eqref{eq:vpfp1d_hermite_semidiscrete_riesz}-\eqref{eq:ops_bh_riesz} and consider for all $t\geq 0$ the associated modified entropy $\mathcal{H}_h(t)$ given in \eqref{eq:H0} and energy functional $\mathcal{E}_h(t)$ associated to the linearized system \eqref{eq:vpfp1d_hermite_semidiscrete_riesz:linz} given by \eqref{Eh}.
    There exists a positive constant $\bar{\alpha}_0$ such that, for all $\alpha_0\in (0, \bar{\alpha}_0]$ and all $t\in \mathbb{R}^+$ 
    \begin{equation} \label{eq:vpfp1d_hermite_semidiscrete_free_energy_equivalence}
        \frac{1}{2}\,\mathcal{E}_h(t)\le \mathcal{H}_h(t) \le \frac{3}{2}\,\mathcal{E}_h(t).
    \end{equation}
    Furthermore, we have $\bar{\alpha}_0\,=\, 1/C$ for some $C>0$ depending only on $\rho_0$, $T_0$, $|\mathbb{T}|$, $m$ and $C_{qu}$. 
\end{lemma}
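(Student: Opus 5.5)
The plan is to bound the cross term in \eqref{eq:H0} directly by the energy $\mathcal{E}_h$ and then take $\alpha_0$ small. By definition,
\[
\mathcal{H}_h(t) - \mathcal{E}_h(t) \,=\, -\frac{\alpha_0}{D_{\infty,0}}\,\xhdual{D_{h,1}(t)}{F_h(t)}\,.
\]
So \eqref{eq:vpfp1d_hermite_semidiscrete_free_energy_equivalence} follows as soon as we show
\[
\frac{\alpha_0}{D_{\infty,0}}\,\abs{\xhdual{D_{h,1}}{F_h}} \,\le\, \frac12\,\mathcal{E}_h\,.
\]
This is purely a pointwise-in-time estimate and does not use the dynamics at all. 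In particular, it makes no difference whether $(D_h,E_h)$ solves the nonlinear scheme or the linearized one.

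First, apply the Cauchy--Schwarz inequality. Then use item \ref{enum:discrete_H1_regularity_estimate} of Lemma \ref{lem:ops_ah_discrete_regularity_estimate}, which gives $\xhnorm{F_h} \le C_1\,\xhnorm{D_{h,0}-D_{\infty,0}}$ with $C_1$ depending only on $T_0$, $\abs{\mbT}$, $m$ and $C_{qu}$. This yields
\[
\abs{\xhdual{D_{h,1}}{F_h}} \,\le\, C_1\,\xhnorm{D_{h,1}}\,\xhnorm{D_{h,0}-D_{\infty,0}} \,\le\, \frac{C_1}{2}\left(\xhnorm{D_{h,1}-D_{\infty,1}}^2 + \xhnorm{D_{h,0}-D_{\infty,0}}^2\right),
\]
where we used $D_{\infty,1}=0$ and Young's inequality.

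Next, compare with the Hermite expression \eqref{Eh}. Since
\[
\frac{1}{2D_{\infty,0}}\left(\xhnorm{D_{h,0}-D_{\infty,0}}^2+\xhnorm{D_{h,1}}^2\right) \,\le\, \mathcal{E}_h\,,
\]
we obtain
\[
\frac{\alpha_0}{D_{\infty,0}}\,\abs{\xhdual{D_{h,1}}{F_h}} \,\le\, \alpha_0\, C_1\,\mathcal{E}_h\,.
\]
Choosing $\bar\alpha_0 = 1/(2C_1)$ gives the claim for every $\alpha_0\in(0,\bar\alpha_0]$. The constant depends on $T_0$, $\abs{\mbT}$, $m$ and $C_{qu}$, and possibly on $\rho_0=\rho_\infty=D_{\infty,0}$ if one normalizes differently. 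This is consistent with the statement.

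The only delicate point is checking that $F_h$ is well defined at every time, which is needed to invoke Lemma \ref{lem:ops_ah_discrete_regularity_estimate}. This requires the compatibility condition $\xhdual{D_{h,0}-D_{\infty,0}}{1}=0$. It holds for all $t$ by mass conservation: test the first line of \eqref{eq:vpfp1d_hermite_semidiscrete_vfp} with $u_0=1$, and note that $a_h^*(D_{h,1},1)=a_h(1,D_{h,1})=0$ by \eqref{ah:dual} and \eqref{ah:kernel}. This presumes the initial datum is neutral, which is implicitly required for the discrete Poisson problem to be solvable. The electric field term in $\mathcal{E}_h$ is not needed for this bound and is simply kept as a nonnegative contribution.
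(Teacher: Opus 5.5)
Your proof is correct and follows the paper's argument: Cauchy--Schwarz on the cross term, the discrete $H^1$ regularity bound $\|F_h\|_{L^2(\mathbb{T})}\lesssim\|D_{h,0}-D_{\infty,0}\|_{L^2(\mathbb{T})}$ from Lemma~\ref{lem:ops_ah_discrete_regularity_estimate}, comparison with \eqref{Eh}, and the choice $\bar\alpha_0=1/(2C)$. Your Young-inequality bookkeeping also makes explicit that the factor $1/D_{\infty,0}$ cancels, so $\bar\alpha_0$ need not depend on $\rho_0$. Your remark that mass conservation propagates the compatibility condition $\langle D_{h,0}-D_{\infty,0},1\rangle=0$, which makes $F_h$ well defined, fills in a point the paper leaves implicit.
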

\begin{proof}
We estimate the additional term in the definition of $\mathcal{H}_h$ thanks to the Cauchy-Schwarz inequality and we apply \eqref{eq:ops_ah_discrete_H1_regularity_estimate} in Lemma \ref{lem:ops_ah_discrete_regularity_estimate} to bound $\xhnorm{F_h}$, which gives
\begin{equation*}
    |\xhdual{D_{h,1}}{F_h} |\,\le\, \xhnorm{D_{h,1}}\,\xhnorm{F_h}
    \,\le\, C\,\xhnorm{D_{h,1}}\,\xhnorm{D_{h,0}-D_{\infty,0}} \,\le\, C\,\mathcal{E}_h\,,    
\end{equation*}
for some positive constant $C$ depending only on $\rho_0$, $T_0$, $|\mathbb{T}|$, $m$ and $C_{qu}$. It follows that
\begin{equation*}
    (1 -\alpha_0\,C)\,\mathcal{E}_h\,\le\, \mathcal{H}_h\,\le\, (1 + \alpha_0 \,C)\,\mathcal{E}_h\,.
\end{equation*}
We obtain \eqref{eq:vpfp1d_hermite_semidiscrete_free_energy_equivalence} by taking $\alpha_0 \in (0,\bar{\alpha}_0]$ with $\bar{\alpha}_0 = 1/(2C)$.
\end{proof}

The second step consists in deriving a differential inequality for  $\mathcal{H}_h$ where, unlike in the linearized energy estimate of Proposition \ref{prop:4:2}, the coercivity is regained thanks to the additional term in the definition \eqref{eq:H0} of $\mathcal{H}_h$, and where the nonlinear contributions due to the coupling with the Poisson equation are controlled by $\mathcal{H}_h^2$.
\begin{lemma} \label{lem:vpfp1d_hermite_semidiscrete_free_energy_generalized_gronwall_condition}
Let $(D_h,E_h)$ be the solution to \eqref{eq:vpfp1d_hermite_semidiscrete_riesz}-\eqref{eq:ops_bh_riesz} and consider for all $t\geq 0$ the associated modified entropy $\mathcal{H}_h(t)$ given in \eqref{eq:H0}.
    There exists a positive constant $C$ such that, for all $\alpha_0\in (0, \min(\tau_0,\tau_0^{-1})/C]$ and all $t\in \mathbb{R}^+$, it holds
    \begin{equation*}
        \frac{\dd{\mathcal{H}_h}}{\dd{t}}(t) \,+\, \frac{2\alpha_0}{3}\,\mathcal{H}_h(t)
        \,\le\, C\max(\tau_0,\tau_0^{-1})\,
        \mathcal{H}_h^2(t)
        \,.
\end{equation*}
In the previous estimate, the constant $C$ only depends on $\rho_0$, $T_0$, $|\mathbb{T}|$, $m$ and $C_{qu}$.
\end{lemma}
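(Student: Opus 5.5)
We differentiate $\mathcal{H}_h$ along solutions of the nonlinear scheme \eqref{eq:vpfp1d_hermite_semidiscrete_riesz} and split the result into the linear part, treated exactly as in Proposition \ref{prop:4:2}, plus nonlinear remainders. Repeating the computation of Proposition \ref{prop:4:2}, the transport and electric coupling terms cancel. The only difference is the projected nonlinear terms, so that
\[
\frac{\dd \mathcal{E}_h}{\dd t} + \frac{1}{\tau_0}\mathcal{I}_h = \frac{1}{D_{\infty,0}\sqrt{T_0}}\Big(\xhdual{E_h(D_{h,0}-D_{\infty,0})}{D_{h,1}} + \sum_{k=2}^{N_H}\sqrt{k}\,\xhdual{E_h D_{h,k-1}}{D_{h,k}}\Big) =: \mathcal{N}_1 .
\]
The projection $\Pi_h$ disappears because the test functions lie in $U_h^m$. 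Each term is bounded by $\xhnorm[\infty]{E_h}\,\xhnorm{D_{h,k-1}-D_{\infty,k-1}}\,\xhnorm{\sqrt{k}D_{h,k}}$. We use Lemma \ref{lem:ops_bh_props}\ref{enum:ops_bh_electric_regularity_estimate} to replace $\xhnorm[\infty]{E_h}$ by $\xhnorm{D_{h,0}-D_{\infty,0}}\lesssim\mathcal{E}_h^{1/2}$, and we note $\sum_k k\xnorm{D_{h,k}}^2 = D_{\infty,0}\mathcal{I}_h$. This gives $|\mathcal{N}_1|\lesssim \mathcal{E}_h\,\mathcal{I}_h^{1/2}$.

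Next we differentiate the cross term $-\frac{\alpha_0}{D_{\infty,0}}\xhdual{D_{h,1}}{F_h}$. For $\xhdual{\partial_t D_{h,1}}{F_h}$ we use the second line of the scheme.
\begin{itemize}
\item The term $\xhdual{\mathcal{A}_h D_{h,0}}{F_h}=\xhdual{D_{h,0}-D_{\infty,0}}{\mathcal{A}_h^*F_h}$ produces the good term $-\frac{\alpha_0}{D_{\infty,0}}\xhnorm{D_{h,0}-D_{\infty,0}}^2$, via \eqref{eq:ops_ah_discrete_kernel} and \eqref{eq:ops_ah_elliptic}.
\item The Poisson term $\frac{D_{\infty,0}}{T_0}\xhdual{\mathcal{A}_h\Phi_h}{F_h}=\frac{D_{\infty,0}}{T_0}\xhdual{\Phi_h}{D_{h,0}-D_{\infty,0}}$ equals $\frac{D_{\infty,0}}{T_0}\xhnorm{E_h}^2$ by the computation in the proof of Lemma \ref{lem:ops_bh_props}. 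It therefore has a good sign and also supplies control of the $E_h$ part of $\mathcal{E}_h$.
\item The terms $\sqrt2\xhdual{D_{h,2}}{\mathcal{A}_hF_h}$ and $\frac1{\tau_0}\xhdual{D_{h,1}}{F_h}$ are bounded via Lemma \ref{lem:ops_ah_discrete_regularity_estimate} by $(1+\tau_0^{-1})\,\mathcal{I}_h^{1/2}\xhnorm{D_{h,0}-D_{\infty,0}}$.
\end{itemize}
For $\xhdual{D_{h,1}}{\partial_tF_h}$ we use $-\mathcal{A}_h^*\partial_tF_h=\partial_tD_{h,0}=\mathcal{A}_h^*D_{h,1}$. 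Testing the elliptic problem against $\partial_t\Psi_h$ and applying Lemma \ref{lem:ops_ah_props} gives $\xhnorm{\partial_tF_h}\lesssim\xhnorm{D_{h,1}}\lesssim\mathcal{I}_h^{1/2}$, so this contribution is $\lesssim\mathcal{I}_h$. The mass-mean condition holds because $\partial_tD_{h,0}$ has zero mean. The nonlinear contribution $\frac{\alpha_0}{D_{\infty,0}\sqrt{T_0}}\xhdual{\Pi_h(E_h(D_{h,0}-D_{\infty,0}))}{F_h}$ is bounded by $\alpha_0\xhnorm[\infty]{E_h}\xhnorm{D_{h,0}-D_{\infty,0}}\xhnorm{F_h}\lesssim\alpha_0\mathcal{E}_h^{3/2}$, using that $\Pi_h$ is an $L^2$ contraction together with Lemmas \ref{lem:ops_bh_props} and \ref{lem:ops_ah_discrete_regularity_estimate}.

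Collecting all terms yields
\[
\frac{\dd\mathcal{H}_h}{\dd t}+\frac{1}{\tau_0}\mathcal{I}_h+c\,\alpha_0\big(\xhnorm{D_{h,0}-D_{\infty,0}}^2+\xhnorm{E_h}^2\big)\le C\alpha_0(1+\tau_0^{-1})\mathcal{I}_h^{1/2}\xhnorm{D_{h,0}-D_{\infty,0}} + C\alpha_0\mathcal{I}_h + C\mathcal{E}_h\mathcal{I}_h^{1/2}+C\alpha_0\mathcal{E}_h^{3/2}.
\]
We apply Young's inequality to each mixed term.
\begin{itemize}
\item The linear cross term is split as $\frac{c\alpha_0}{4}\xhnorm{D_{h,0}-D_{\infty,0}}^2 + C\alpha_0(1+\tau_0^{-2})\mathcal{I}_h$. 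The $\mathcal{I}_h$ part is absorbed into $\frac{1}{2\tau_0}\mathcal{I}_h$ once $\alpha_0\le\min(\tau_0,\tau_0^{-1})/C$; the same condition also absorbs $C\alpha_0\mathcal{I}_h$.
\item The nonlinear term is split as $C\mathcal{E}_h\mathcal{I}_h^{1/2}\le\frac1{4\tau_0}\mathcal{I}_h+C\tau_0\mathcal{E}_h^2$.
\item The remaining term is split as $C\alpha_0\mathcal{E}_h^{3/2}\le \epsilon\alpha_0\mathcal{E}_h + C\alpha_0\mathcal{E}_h^2$.
\end{itemize}
Since $\mathcal{I}_h\ge\frac{1}{D_{\infty,0}}\sum_{k\ge1}\xnorm{D_{h,k}}^2$, the leftover dissipation $\frac{1}{4\tau_0}\mathcal{I}_h+\frac{c\alpha_0}{2}(\cdots)$ dominates $\alpha_0\mathcal{E}_h$ (again with $\alpha_0\le\tau_0/C$). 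Lemma \ref{lem:vpfp1d_hermite_semidiscrete_free_energy_equivalence} then converts this into $\frac{2\alpha_0}{3}\mathcal{H}_h$ and turns $\mathcal{E}_h^2$ into $\mathcal{H}_h^2$, with constant $\lesssim\max(\tau_0,\tau_0^{-1})$.

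The main obstacle is the bookkeeping of constants. The key constant, the one giving exactly $\frac{2\alpha_0}{3}$, requires a coercive lower bound on the dissipation of order $\alpha_0\mathcal{E}_h$ with constant at least about $1$ after the equivalence $\mathcal{E}_h\le2\mathcal{H}_h$. This may force a rescaling of the sense in which $\alpha_0$ is small; in particular, the good-sign $E_h$ term must be kept rather than discarded. A second delicate point is the bound on $\partial_tF_h$, which must be uniform in $h$; it relies on the discrete coercivity in Lemma \ref{lem:ops_ah_props}.
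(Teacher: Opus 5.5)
Your proposal is correct and follows essentially the same route as the paper's proof. The ingredients match: the energy identity whose nonlinear residual is controlled via the discrete $L^\infty$ bound on $E_h$, the second Hermite equation tested against $F_h$ to produce $\|D_{h,0}-D_{\infty,0}\|^2$ and $\|E_h\|^2$, the bound $\|\partial_t F_h\|\le\|D_{h,1}\|$, and Young's inequality under $\alpha_0\lesssim\min(\tau_0,\tau_0^{-1})$. The constant issue you flag at the end is harmless: the good terms carry the exact coefficients $\alpha_0/D_{\infty,0}$ and $\alpha_0/T_0$, i.e.\ twice the corresponding weights in $\alpha_0\mathcal{E}_h$ (beware that $\langle \mathcal{A}_h\Phi_h,F_h\rangle=-\|E_h\|^2$, a sign your intermediate identity drops). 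So after giving up at most half of them to the Young splittings you still get $\frac{d\mathcal{H}_h}{dt}+\alpha_0\mathcal{E}_h\le C\max(\tau_0,\tau_0^{-1})\mathcal{E}_h^2$, and $\mathcal{E}_h\ge\frac23\mathcal{H}_h$ yields exactly $\frac{2\alpha_0}{3}$; note that absorbing the modes $k\ge1$ into $\frac{1}{4\tau_0}\mathcal{I}_h$ needs $\alpha_0\le 1/(2\tau_0)$, not $\alpha_0\le\tau_0/C$.
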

\begin{proof}
For simplicity, we omit the dependence in $t$ in this proof. To compute the time derivative of $\mathcal{H}_h$, we decompose it as follows 
\begin{equation}\label{dt:H:h}
 \frac{\dd\mathcal{H}_h}{\dd{t}}
 \,=\,
  \frac{\dd\mathcal{E}_h}{\dd{t}} 
  \,-\,
  \frac{\alpha_0}{D_{\infty,0}}\,\left(
  \xdual{D_{h,1}}{\partial_t F_h}
  \,+\,
  \xdual{\partial_t D_{h,1}}{F_h}\right).
\end{equation}
On the one hand, to compute the time derivative of the linearized energy functional $\mathcal{E}_h$, we follow the same lines as in the proof of Proposition \ref{prop:4:2}: we sum the $L^2$-inner products of the first, second and third lines in \eqref{eq:vpfp1d_hermite_semidiscrete_riesz} with $ D_{h,0} - D_{\infty,0}$, $ D_{h,1}$ and $ D_{h,k}$, $k\in \{2\,\dots, N_H\}$ respectively, leading to
\[
\frac{\dd\mathcal{E}_h}{\dd{t}}\,+\,  \frac{1}{\tau_0}\mathcal{I}_h
\,=\, \mathcal{R}_h\,,
\]
where  $\mathcal{R}_h$ is the nonlinear residual
$$
\mathcal{R}_h \,\defeq\, \frac{1}{D_{\infty,0}}\sum_{k=0}^{N_H-1} \sqrt{\frac{k+1}{T_0}}\;\xhdual{\Pi_h(E_h(D_{h,k}-D_{\infty,k}))}{D_{h,k+1}}\,,
$$
whereas $\mathcal{I}_h$ is the entropy dissipation given by \eqref{def:I:h}. Since $D_{h,k+1}\in U^m_h$, we may rewrite $\mathcal{R}_h$ without the projection $\Pi_h$, that is,
\[
\mathcal{R}_h \,=\, \frac{1}{D_{\infty,0}}\sum_{k=0}^{N_H-1} \sqrt{\frac{k+1}{T_0}}\,\xhdual{E_h(D_{h,k}-D_{\infty,k})}{D_{h,k+1}}\,.
\]
On the other hand, to compute the third term in \eqref{dt:H:h}, we consider the $L^2$-inner product of $ F_{h}$ with the second in \eqref{eq:vpfp1d_hermite_semidiscrete_riesz}. Hence, we obtain
$$
\frac{\dd\mathcal{H}_h}{\dd{t}} \;+\; \frac{1}{\tau_0}\,\mathcal{I}_h \;+\; \frac{\alpha_0}{D_{\infty,0}}\,\widetilde{\mathcal{I}}_h \;=\; \mathcal{R}_h \,+\; \frac{\alpha_0}{D_{\infty,0}}\;\widetilde{\mathcal{R}}_{h}, 
$$
with $\widetilde{\mathcal{I}}_h\,=\,\widetilde{\mathcal{I}}_{h,1}\,+\,\widetilde{\mathcal{I}}_{h,2}\,+\,\widetilde{\mathcal{I}}_{h,3}$, where
\begin{equation*}
    \left\{
    \begin{array}{l}
        \ds \widetilde{\mathcal{I}}_{h,1} \;\defeq\; -\,\xhdual{\mathcal{A}_h D_{h,0}}{F_h} + \sqrt{2}\xhdual{\mathcal{A}_h^*D_{h,2}}{F_h} - \frac{1}{\tau_0}\xhdual{D_{h,1}}{F_h}, 
        \\ [1.em]
        \ds \widetilde{\mathcal{I}}_{h,2} \;\defeq \,-
        \frac{D_{\infty,0}}{T_0}\,\xhdual{\mathcal{A}_h\Phi_h}{F_h}\,,
        \\ [1.em]
        \ds \widetilde{\mathcal{I}}_{h,3} \,\defeq\; \xhdual{D_{h,1}}{\partial_t F_h}
    \end{array}
    \right.
\end{equation*}
and  $\widetilde{\mathcal{R}}_{h}$ is given by
$$
\widetilde{\mathcal{R}}_{h} \;\defeq\; -\,\frac{1}{\sqrt{T_0}}\xdual{\Pi_h(E_h(D_{h,0}-D_{\infty,0}))}{F_h}.
$$
Here, the contributions $\widetilde{\mathcal{I}}_{h}$ and $\widetilde{\mathcal{R}}_{h}$ come from the additional term in the definition of the relative entropy functional $\mathcal{H}_h$. In what follows, we denote by $C > 0$ a constant that depends only on $\rho_0$, $T_0$, $\abs{\mbT}$, the polynomial degree $m$, and the mesh quasi-uniformity constant $C_{qu}$. 

To lower bound $\widetilde{\mathcal{I}}_{h,1}$,
we rewrite its first term using \eqref{eq:ops_ah_discrete_kernel}, applying the duality relation \eqref{eq:ops_ah_discrete_duality}, and substituting $\mathcal{A}_h^*F_h$ according to \eqref{eq:ops_ah_elliptic}. This yields
\begin{equation*}
\begin{aligned}
    -\xhdual{\mathcal{A}_h D_{h,0}}{F_h} \,=\, -\xhdual{\mathcal{A}_h (D_{h,0}-D_{\infty,0})}{F_h} &\;=\, -\xhdual{D_{h,0}-D_{\infty,0}}{\mathcal{A}_h^*F_h} \\
    &\,=\; \xhnorm{D_{h,0}-D_{\infty,0}}^2 \,.
 \end{aligned}
\end{equation*}
Next, we obtain a lower bound on the second term in the definition of $\widetilde{\mathcal{I}}_{h,1}$ thanks to the duality relation \eqref{eq:ops_ah_discrete_duality}, the Young inequality, and the regularity estimate \eqref{eq:ops_ah_discrete_H2_regularity_estimate} in Lemma \ref{lem:ops_ah_discrete_regularity_estimate}, which yields for all $\eta >0$,
\begin{equation*}
    \sqrt{2}\,\xhdual{\mathcal{A}_h^*D_{h,2}}{F_h} = \sqrt{2}\,\xhdual{D_{h,2}}{\mathcal{A}_h F_h} \,\geq\, -\,\frac{D_{\infty,0}}{\eta} \,\mathcal{I}_h\,-\,
    C\eta\xhnorm{D_{h,0}-D_{\infty,0}}^2.
\end{equation*}
 Similarly, we estimate from below the third term of  $\widetilde{\mathcal{I}}_{h,1}$ thanks to the Young inequality and the regularity estimate \eqref{eq:ops_ah_discrete_H1_regularity_estimate} in Lemma \ref{lem:ops_ah_discrete_regularity_estimate},
\begin{equation*}
    -\frac{1}{\tau_0}\;\xhdual{D_{h,1}}{F_h} \,\geq\; -\frac{1}{\tau_0}\;\xhnorm{D_{h,1}}\xhnorm{F_h} \,\geq\,
    -\,\frac{D_{\infty,0}}{\eta\,\tau_0}\,\mathcal{I}_h
    \,-\,\frac{C\eta}{\tau_0}\;\xhnorm{D_{h,0}-D_{\infty,0}}^2\,.
\end{equation*}
Gathering these estimates, we find 
\begin{equation}
\label{est:1}
    \widetilde{\mathcal{I}}_{h,1} \ge \, \left(1  - {C\eta}\,(1+\tau_0^{-1}) \right)\xhnorm{D_{h,0}-D_{\infty,0}}^2
    \,-\, \frac{\tau_0^{-1}+1}{\eta}\,D_{\infty,0}\,\mathcal{I}_h\,.
\end{equation}

To evaluate $\widetilde{\mathcal{I}}_{h,2}$, we apply the duality relation \eqref{eq:ops_ah_discrete_duality}, substitute $\mathcal{A}_h^* F_h$ according to the second equation in \eqref{eq:ops_ah_elliptic},
substitute $D_{h,0}-D_{\infty,0}$ according to the second equation in the fourth line of \eqref{eq:vpfp1d_hermite_semidiscrete_riesz}, that is
\begin{equation*}
    \widetilde{\mathcal{I}}_{h,2} \,=\, -\frac{D_{\infty,0}}{T_0} \xhdual{\Phi_h}{\mathcal{A}_h^*F_h} \,=\,
    \frac{D_{\infty,0}}{T_0}\xhdual{\Phi_h}{D_{h,0}-D_{\infty,0}} = -\frac{D_{\infty,0}}{T_0}\xhdual{\Phi_h}{\mathcal{B}_h^* E_h} \,. 
\end{equation*}
Next, we apply the duality relation \eqref{eq:ops_bh_discrete_duality} and substitute $\mathcal{B}_h\Phi_h$ according to the first equation in the fourth line of \eqref{eq:vpfp1d_hermite_semidiscrete_riesz}, leading to
\begin{equation}
    \widetilde{\mathcal{I}}_{h,2} = 
    -\frac{D_{\infty,0}}{T_0}\xhdual{\mathcal{B}_h\Phi_h}{E_h}  
    = \frac{D_{\infty,0}}{T_0}\xhnorm{E_h}^2\,\geq \,0\,.
\label{est:2}
\end{equation}

Then we treat the last term $\widetilde{\mathcal{I}}_{h,3}$ and first evaluate $\xhnorm{\partial_t F_h}^2$. We compute $F_h$ according to the first equation in \eqref{eq:ops_ah_elliptic}, apply the duality relation \eqref{eq:ops_ah_discrete_duality}, and substitute $\mathcal{A}_h^* F_h$ according to the second equation in \eqref{eq:ops_ah_elliptic}, it yields that  
\begin{equation*}
    \xhnorm{\partial_t F_h}^2 \,=\,-\, \xhdual{\partial_t F_h}{\partial_t \mathcal{A}_h\Psi_h}
    \,=\,-\, \xhdual{\partial_t\mathcal{A}_h^* F_h}{\partial_t\Psi_h} \,=\,\xhdual{\partial_t(D_{h,0}-D_{\infty,0})}{\partial_t\Psi_h} \,.
\end{equation*}
Next, we substitute $\partial_t(D_{h,0}-D_{\infty,0})$ according to the first equation in \eqref{eq:vpfp1d_hermite_semidiscrete_riesz}, apply the duality relation \eqref{eq:ops_ah_discrete_duality}, substitute $\mathcal{A}_h \Psi_h$ according to the first equation in \eqref{eq:ops_ah_elliptic}
 and we apply the Cauchy-Schwarz inequality to get that
 \begin{equation*}
    \xhnorm{\partial_t F_h}^2 \,=\,
    \xhdual{\mathcal{A}_h^*D_{h,1}}{\partial_t\Psi_h}
    \,=\,
    \xhdual{D_{h,1}}{\partial_t \mathcal{A}_h\Psi_h}
    \,\leq \,
    \xhnorm{D_{h,1}}
    \xhnorm{\partial_t F_{h}}\,.    
\end{equation*}
Hence, to compute a lower bound on $\widetilde{\mathcal{I}}_{h,3}$, we apply the Cauchy-Schwarz inequality and bound $\xhnorm{\partial_t F_{h}}$ thanks to the previous estimate 
\begin{equation}
\label{est:3}
    \widetilde{\mathcal{I}}_{h,3} \,\ge\, -\xhnorm{D_{h,1}}^2
    \,\ge\, -\,D_{\infty,0}\,\mathcal{I}_h\,.
\end{equation}

We then estimate the nonlinear residual $\mathcal{R}_h$ using Young's inequality 
\begin{eqnarray*}
    \mathcal{R}_h &=& \frac{1}{D_{\infty,0}}\sum_{k=0}^{N_H-1}\sqrt{\frac{k+1}{T_0}}\xhdual{E_h(D_{h,k}-D_{\infty,k})}{D_{h,k+1}} \\[0.6em]
     &\le & \frac{1}{2\,\tau_0 }\,\mathcal{I}_h \,+\, \frac{\tau_0}{2\,D_{\infty,0}\,T_0}\sum_{k=0}^{N_H-1}\xhnorm[\infty]{E_h}^2\xhnorm{D_{h,k}-D_{\infty,k}}^2\,.
\end{eqnarray*}
Applying \eqref{eq:ops_bh_electric_regularity_estimate} in Lemma \ref{lem:ops_bh_props} to estimate $\xhnorm[\infty]{E_h}$, it yields
\begin{equation}
\label{est:4}
    \mathcal{R}_h \,\le\; \frac{1}{2\,\tau_0}\,\mathcal{I}_h \,+\,
    C\tau_0\,\mathcal{E}_h^2.
\end{equation}

To control the nonlinear term $\widetilde{\mathcal{R}}_{h}$, we first remove $\Pi_h$ using that $F_h\in U^m_h$ and then, we apply the Young inequality and bound $E_h$ by its supremum over $\mathbb{T}$,  which yields for all $\eta>0$,
\begin{equation*}
    \widetilde{\mathcal{R}}_{h}
    \,\le\, \frac{1}{2\,\eta}\,\xhnorm[\infty]{E_h}^2\xhnorm{D_{h,0}-D_{\infty,0}}^2\,+\,
    \frac{\eta}{2\,T_0}\,\xhnorm{F_h}^2
    \,.
\end{equation*}
Next, we again apply \eqref{eq:ops_bh_electric_regularity_estimate} in Lemma \ref{lem:ops_bh_props} to bound $\|E_h\|_{L^\infty(\mathbb{T})}$ and \eqref{eq:ops_ah_discrete_H1_regularity_estimate} in Lemma \ref{lem:ops_ah_discrete_regularity_estimate} to bound $\|F_h\|_{L^2(\mathbb{T})}$, leading to
\begin{equation}
\label{est:5}
    \widetilde{\mathcal{R}}_{h}
    \,\le\, C \,\left(\frac{1}{\eta}\,\mathcal{E}_h^2\,+\, 
    \eta\xhnorm{D_{h,0}-D_{\infty,0}}^2\right)\,.
\end{equation}

Finally, gathering the above estimates \eqref{est:1}-\eqref{est:5}, we  derive the following differential inequality
\begin{equation*}
    \begin{aligned}
        \frac{\dd{\mathcal{H}_h}}{\dd{t}} &\,+\, 
        \frac{\alpha_0}{D_{\infty,0}}\left(1-
        C\eta\,(1+\tau_0^{-1})
        \right)\,
        \xhnorm{D_{h,0}-D_{\infty,0}}^2
        \,+\,
        \frac{\alpha_0}{T_0}\xhnorm{E_h}^2\\[0.5em]
        &+\frac{1}{\tau_0}\,\left(
        \frac{1}{2}
        \;-\;
        \alpha_0
        \left(
        \tau_0+\frac{1+\tau_0}{\eta}
        \right)\right)\,\mathcal{I}_h \,\le\, C\,\left(\tau_0+\frac{\alpha_0}{\eta}\right)\,\mathcal{E}_h^2,
    \end{aligned}
\end{equation*}
which is valid for all $\eta>0$.  Thus we fix to $\eta\,=\, 1/(2\;C\;(1+\tau^{-1}_0))$ in order to get the dissipation in $\xhnorm{D_{h,0}-D_{\infty,0}}^2$, which yields
\begin{equation*}
\begin{aligned}
        \frac{\dd{\mathcal{H}_h}}{\dd{t}} \,+\, 
        \frac{\alpha_0}{2 D_{\infty,0}}
        \xhnorm{D_{h,0}-D_{\infty,0}}^2
        &\,+\,
        \frac{\alpha_0}{T_0}\xhnorm{E_h}^2\,+\,\frac{1}{\tau_0}\left(
        \frac{1}{2}
        \,-\,
        C\,\alpha_0\,\max(\tau_0,\tau_0^{-1})\right)\mathcal{I}_h \\[0.8em]
        &\,\le\, C\;(1+\alpha_0)\;\max(\tau_0,\tau_0^{-1})\,\mathcal{E}_h^2\,.
\end{aligned}
\end{equation*}
Then we choose the free parameter $\alpha_0>0$ in order to get the dissipation in $\mathcal{I}_h$:  there exists a constant $C>0$, only depending on $T_0$, $|\mathbb{T}|$, $m$ and $C_{qu}$, such that for all $\alpha_0\in(0, \min(\tau_0,\tau_0^{-1})/C)$, we have
\begin{equation*}
        \frac{\dd{\mathcal{H}_h}}{\dd{t}} \,+\, 
        \frac{\alpha_0}{2 D_{\infty,0}}
        \xhnorm{D_{h,0}-D_{\infty,0}}^2
        \,+\,
        \frac{\alpha_0}{T_0}\xhnorm{E_h}^2\,+\,\frac{1}{4\,\tau_0}\,\mathcal{I}_h \,\le\, C\max(\tau_0,\tau_0^{-1})\,\mathcal{E}_h^2\,.
\end{equation*}
Finally we use  that 
$$
\mathcal{I}_h \;\geq\, \frac{1}{D_{\infty,0}}\sum_{k=1}^{N_H} \xhnorm{D_{h,k}-D_{\infty,k}}^2 \,,
$$ 
and for $\alpha_0\leq 1/(2\,\tau_0)$, it holds that
\begin{equation*}
        \frac{\dd{\mathcal{H}_h}}{\dd{t}} \,+\, \alpha_0\,\mathcal{E}_h
        \,\le\, C\max(\tau_0,\tau_0^{-1})\,
        \mathcal{E}_h^2
        \,.
\end{equation*}
We lower and upper bound $\mathcal{E}_h$ on the left and right hand side respectively thanks to Lemma \ref{lem:vpfp1d_hermite_semidiscrete_free_energy_equivalence}, which yields the expected estimate, that is,
\begin{equation*}
        \frac{\dd{\mathcal{H}_h}}{\dd{t}} \,+\, \frac{2\alpha_0}{3}
        \,\mathcal{H}_h
        \,\le\, C\max(\tau_0,\tau_0^{-1})\,
        \mathcal{H}_h^2
        \,.
\end{equation*}

\end{proof}

To conclude the proof of Theorem \ref{thm:vpfp1d_hermite_semidiscrete_main_theorem}, we fix the parameter $\alpha_0$ in \eqref{eq:H0} as $\alpha_0=\min(\tau_0,\tau_0^{-1})/C$, where $C$ is determined  in Lemma \ref{lem:vpfp1d_hermite_semidiscrete_free_energy_generalized_gronwall_condition}. Then, it yields  that for all $t\geq 0$, 
\begin{equation*}
        \frac{\dd{\mathcal{H}_h}}{\dd{t}}(t) \,+\, \frac{2\alpha_0}{3}\,\mathcal{H}_h(t)
        \,\left(1-C^2\max(\tau^2_0,\tau_0^{-2})\,
        \mathcal{H}_h(t)
        \right)
        \,\le\, 0
        \,.
\end{equation*}
Therefore, there exists a constant $\kappa>0$ explicitly given by 
$$
\kappa \,:=\,\frac{1}{\sqrt 6 \,C}\,,
$$ 
such that if  $\mathcal{H}_h(t=0)$ is sufficiently small, that is,  using Lemma \ref{lem:vpfp1d_hermite_semidiscrete_free_energy_equivalence},
\[
\mathcal{H}_h(t=0) \,\leq \, \frac{3}{2}\;\mathcal{E}_h(t=0) \,\leq \,
\frac{3\,\kappa^2}{2}\,\min(\tau^2_0,\tau_0^{-2})\,,
\]
then we have
$$
1-C^2\max(\tau^2_0,\tau_0^{-2})\,
        \mathcal{H}_h(0) \geq \frac{3}{4}\,,
$$
which means that the functional $t\mapsto \mathcal{H}_h(t)$ is decreasing and 
\[
\mathcal{H}_h(t)\,\leq \,\mathcal{H}_h(0)\,
\exp{\left(-\,\frac{\alpha_0}{2}\,t\right)}\,\leq\,
\mathcal{H}_h(0)\,
\exp{\left(-\,\kappa\,
\min(\tau_0,\tau_0^{-1})
\,t\right)}\,.
\]
We lower bound $\mathcal{H}_h(t)$ and upper bound $\mathcal{H}_h(0)$ according to Lemma \ref{lem:vpfp1d_hermite_semidiscrete_free_energy_equivalence}, 
leading to the estimate in Theorem \ref{thm:vpfp1d_hermite_semidiscrete_main_theorem}, that is
\[
\mathcal{E}_h(t)\,\leq \,3\,\mathcal{E}_h(0)\,
\exp{\left(-\,\kappa\,
\min(\tau_0,\tau_0^{-1})
\,t\right)}\,.
\]

\section{Numerical simulations}
\label{sec:simulation}

In this section, we present several numerical experiments using the proposed scheme \eqref{eq:vpfp1d_hermite_semidiscrete_riesz}. For the time discretization, we adopt the same splitting strategy as in \cite{blaustein_structure_2024}, combined with a second-order diagonally implicit Runge-Kutta scheme. To control spurious oscillations arising from the Hermite expansion in velocity, we apply the Hou-Li filter with $\frac{2}{3}$ dealiasing rule \cite{hou_computing_2007, di_filtered_2019}. In spatial discretization, we employ a second-order discontinuous Galerkin scheme with alternating fluxes $(\hat{g}_a(u),\hat{g}_a^*(u)) = (u^-,u^+)$ for both Vlasov-Fokker-Planck and Poisson equations.

Throughout the simulations, the background temperature is fixed to $T_0 = 1$ and the time step is set to $\Delta t = 0.1$ unless otherwise specified. The spatial domain is taken as $\mbT = [0, L]$. The number of Hermite modes $N_H$ is chosen adaptively based on the collisional regime. In particular, three representative regimes are considered: a weakly collisional regime ($\tau_0 = 10^5$), a moderately collisional regime ($\tau_0 = 10^3$), and a strongly collisional regime ($\tau_0 = 10$). 

\subsection{Order of convergence} \label{sec:convergence_test}
In this subsection, we assess the convergence order of the proposed scheme and we test its uniformity with respect to the physical parameter $\tau_0$. The convergence rate is measured on successive mesh refinements using the relative errors between numerical solutions defined by
\begin{equation*} \label{eq:relative_error}
    \varepsilon_h^p = \left(\sum_{k=0}^{N_H} \xnorm[p]{D_{h,k}-D_{h/2,k}}^2\right)^{\frac{1}{2}}.
\end{equation*}
We choose the initial data as a perturbation of the equilibrium, namely,
\begin{equation} \label{eq:landau_damping_ic}
    f(0,x,v) = \left(1 + \delta\cos\left( \frac{2\pi x}{L}\right)\right)\mathcal{M}(v),
\end{equation}
where the length of the spatial domain is $L = 4\pi$ whereas the amplitude of the perturbation is $\delta = 0.05$. In the limit $\tau_0 \goto +\infty$, this setting corresponds to the classical Landau damping configuration. We perform simulations with $\tau_0  = 10^k$ with $k \in \{1, 3, 5\}$. The final time is set to $t = 1$. The results reported in Table~\ref{tab:convergence_landau_damping} show that the proposed scheme achieves the desired convergence rates across all collisional regimes.

\begin{table}[htpb]
    \renewcommand\arraystretch{1.1}
    \centering
    \small
    \begin{tabular}{|c|c|cccccc|}
        \hline
        $\tau_0$ & $ N_x \times N_H $   & $\varepsilon_{h}^{1}$ & Order &  $\varepsilon_{h}^{2} $ & Order & $\varepsilon_{h}^{\infty}$ & Order \\
        \hline

        \multirow{3}{*}{$10^1$}  
        & $64 \times 64$   & 5.52E-03 & - & 1.82E-03 & - & 1.10E-03 & - \\
        & $128 \times 128$ & 1.43E-03 & 1.95 & 4.70E-04 & 1.95 & 2.85E-04 & 1.96 \\
        & $256 \times 256$ & 3.63E-04 & 1.98 & 1.20E-04 & 1.98 & 7.20E-05 & 1.98 \\
        \hline

        \multirow{3}{*}{$10^3$} 
        & $64 \times 64$   & 5.87E-03 & - & 1.96E-03 & - & 1.23E-03 & - \\
        & $128 \times 128$ & 1.31E-03 & 2.16 & 4.45E-04 & 2.14 & 2.89E-04 & 2.09 \\
        & $256 \times 256$ & 2.67E-04 & 2.29 & 8.98E-05 & 2.31 & 5.62E-05 & 2.36 \\
        \hline

        \multirow{3}{*}{$10^5$} 
        & $64 \times 64$   & 5.87E-03 & - & 1.96E-03 & - & 1.23E-03 & - \\
        & $128 \times 128$ & 1.31E-03 & 2.16 & 4.44E-04 & 2.14 & 2.90E-04 & 2.09 \\
        & $256 \times 256$ & 2.68E-04 & 2.29 & 8.98E-05 & 2.31 & 5.64E-05 & 2.36 \\
        \hline
    \end{tabular}
    \caption{\em $L^1$, $L^2$ and $L^\infty$ errors and convergence rate for Landau damping in Section \ref{sec:convergence_test}, $\delta = 0.05$, $\kappa = 0.5$, $\tau_0 \in \set{10^1,10^3,10^5}$ at time $t = 1$.}
    \label{tab:convergence_landau_damping}
\end{table}

\subsection{Comparison across collisional regimes}
\label{sec:landau_compare_behavior}
In this subsection, we investigate the behavior of the proposed scheme across different collisional regimes. The phase space is discretized using $N_x = 128$ cells in space and $N_H = 640$ Hermite modes in velocity. The initial condition is prescribed by \eqref{eq:landau_damping_ic} with perturbation amplitude $\delta = 0.5$, which corresponds to the strong Landau damping regime in the collisionless limit $\tau_0 \to +\infty$.

Figure~\ref{fig:landau_compare_distribution} displays snapshots of the distribution function $f$ at times $t \in \{4,16,40\}$. In the weakly collisional regime, the solution exhibits pronounced phase mixing, and fine-scale filamentation in phase space persists over large time. As $\tau_0$ decreases, collisional effects progressively damp high-frequency velocity modes, leading to a faster relaxation toward equilibrium. In the strongly collisional regime, phase mixing is rapidly suppressed, and the solution relaxes exponentially toward a stationary state.

\begin{figure}[!htb]
    \centering
    \begin{subfigure}[t]{0.32\textwidth}
        \centering
        \includegraphics[width=\linewidth, height=4cm, keepaspectratio]{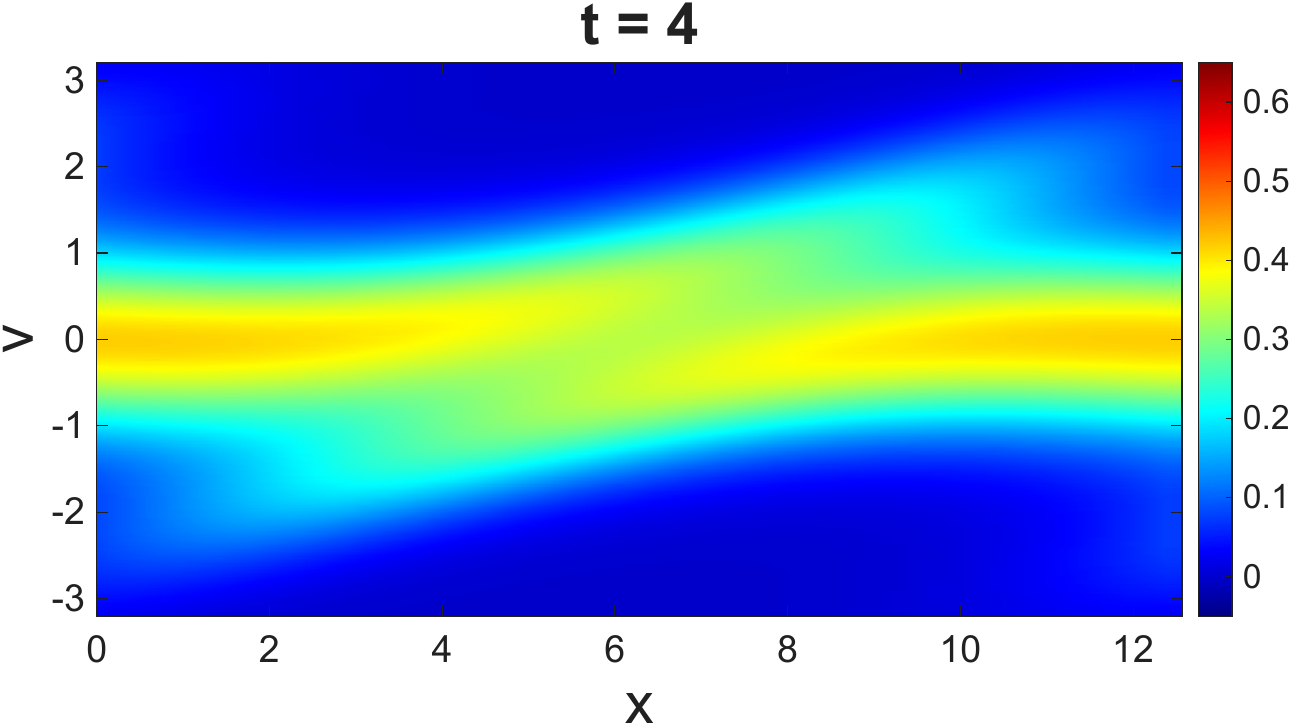}
    \end{subfigure}
    \begin{subfigure}[t]{0.32\textwidth}
        \centering
        \includegraphics[width=\linewidth, height=4cm, keepaspectratio]{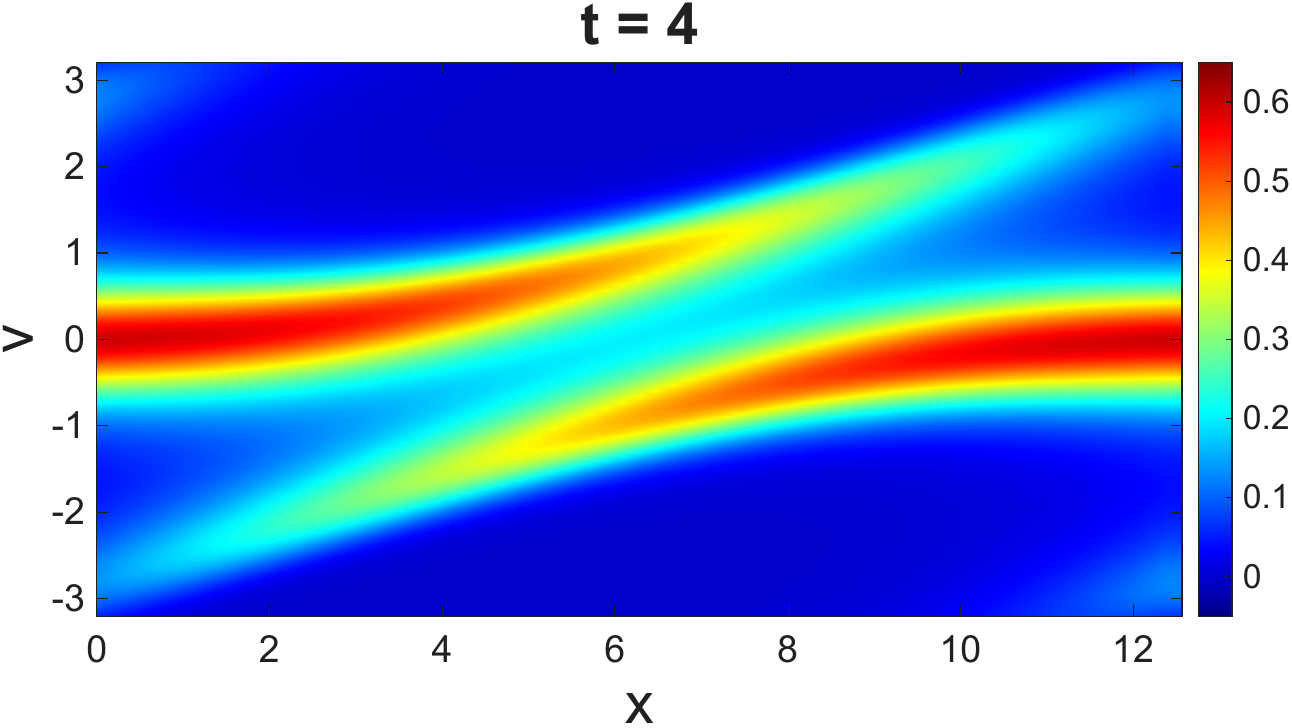}
    \end{subfigure}
    \begin{subfigure}[t]{0.32\textwidth}
        \centering
        \includegraphics[width=\linewidth, height=4cm, keepaspectratio]{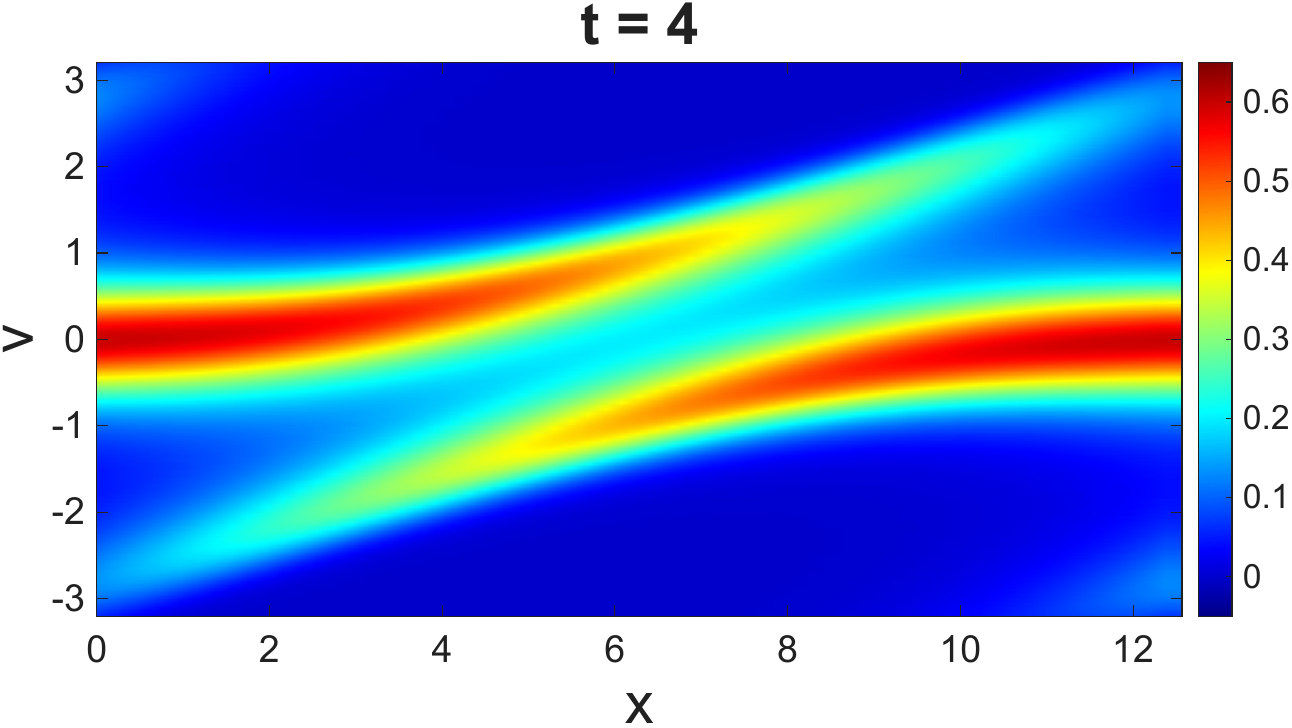}
    \end{subfigure}

    \medskip

    \begin{subfigure}[t]{0.32\textwidth}
        \centering
        \includegraphics[width=\linewidth, height=4cm, keepaspectratio]{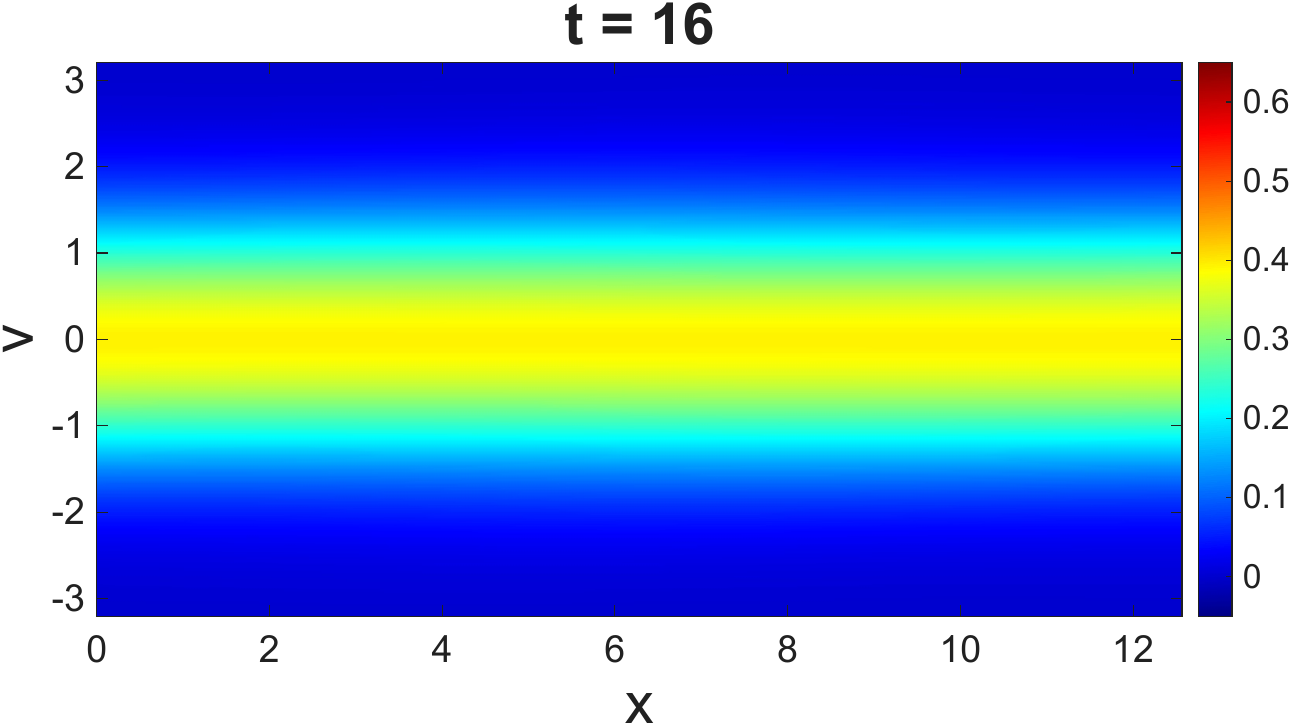}
    \end{subfigure}
    \begin{subfigure}[t]{0.32\textwidth}
        \centering
        \includegraphics[width=\linewidth, height=4cm, keepaspectratio]{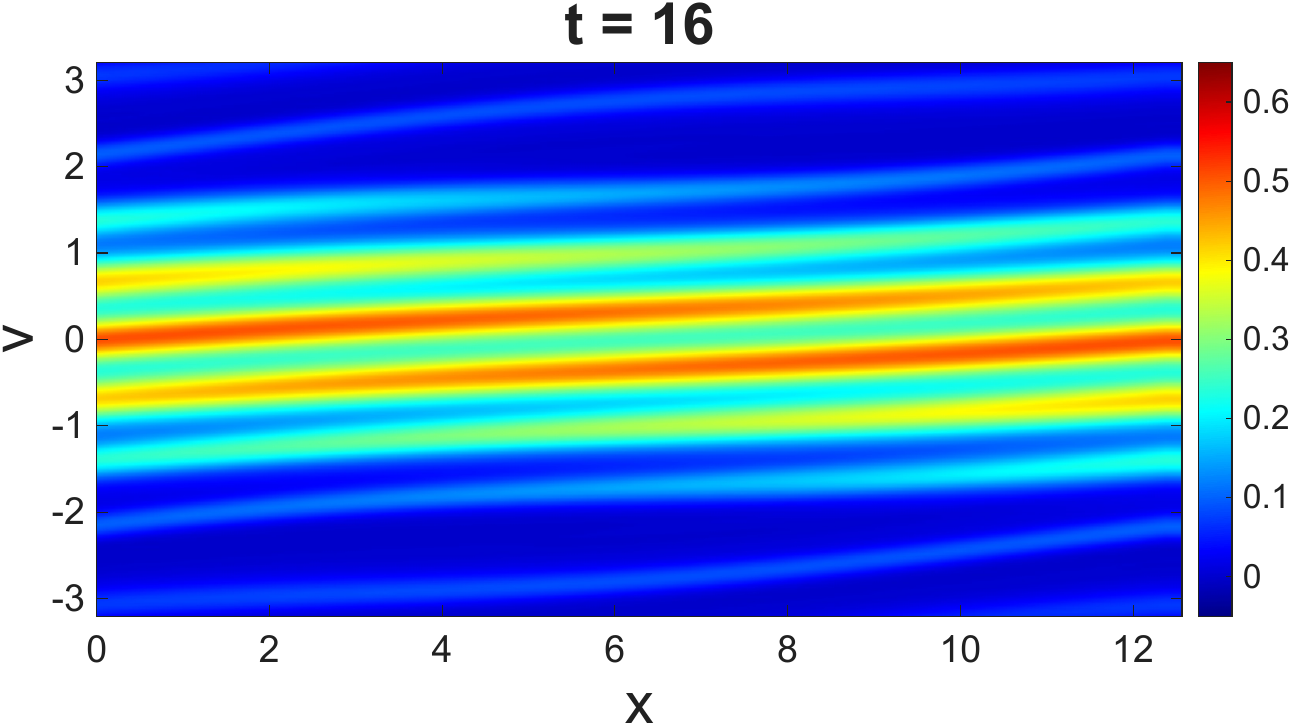}
    \end{subfigure}
    \begin{subfigure}[t]{0.32\textwidth}
        \centering
        \includegraphics[width=\linewidth, height=4cm, keepaspectratio]{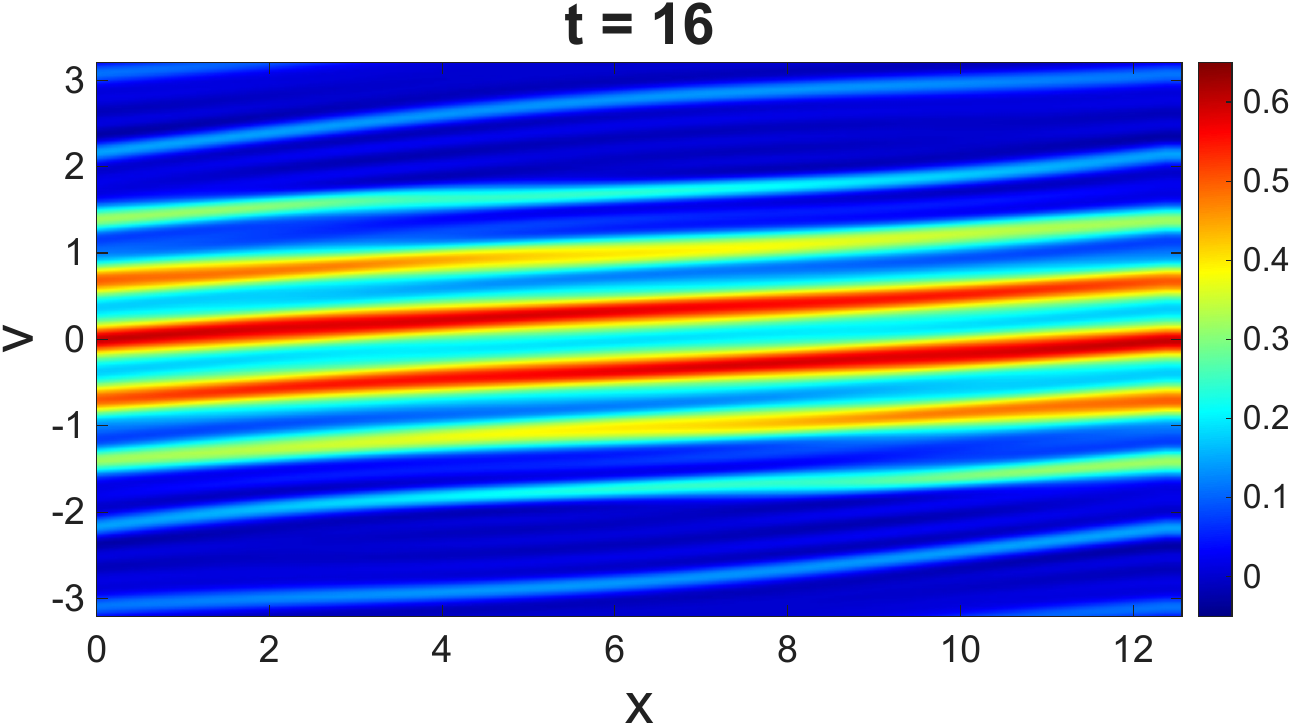}
    \end{subfigure}

    \medskip

    \begin{subfigure}[t]{0.32\textwidth}
        \centering
        \includegraphics[width=\linewidth, height=4cm, keepaspectratio]{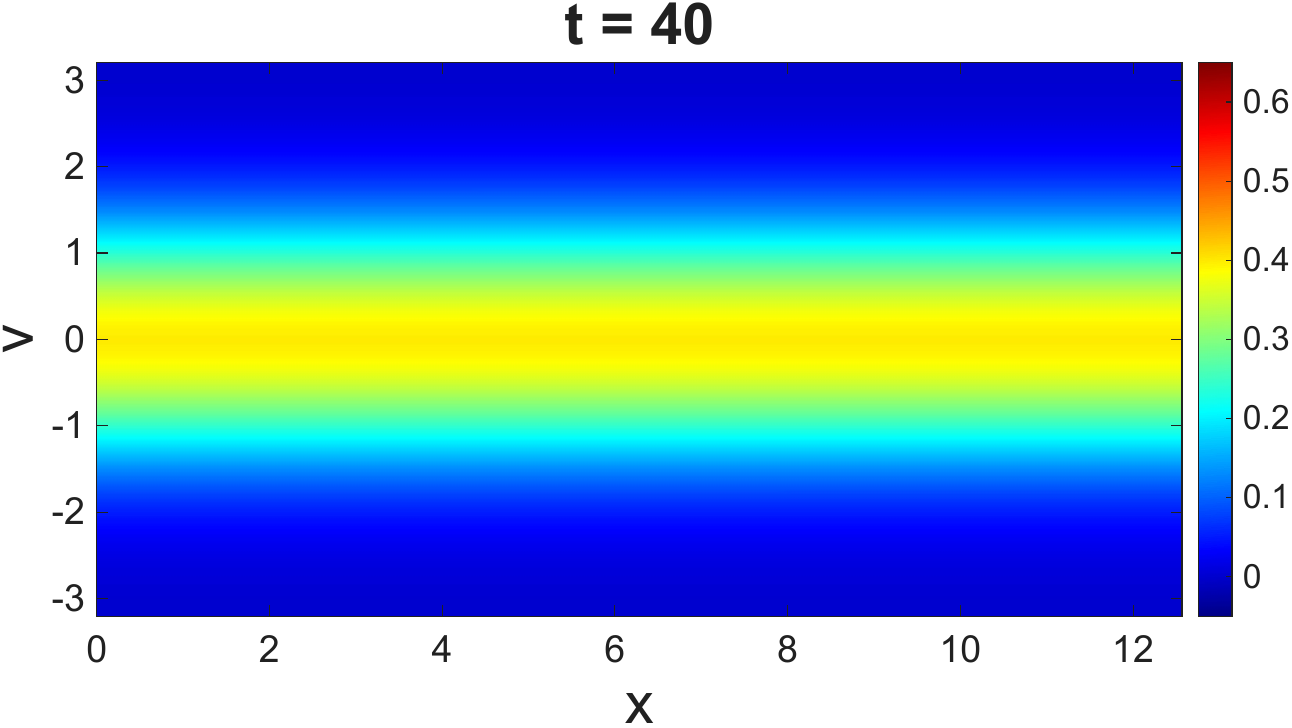}
        \subcaption{$\tau_0 = 10^1$.}
    \end{subfigure}
    \begin{subfigure}[t]{0.32\textwidth}
        \centering
        \includegraphics[width=\linewidth, height=4cm, keepaspectratio]{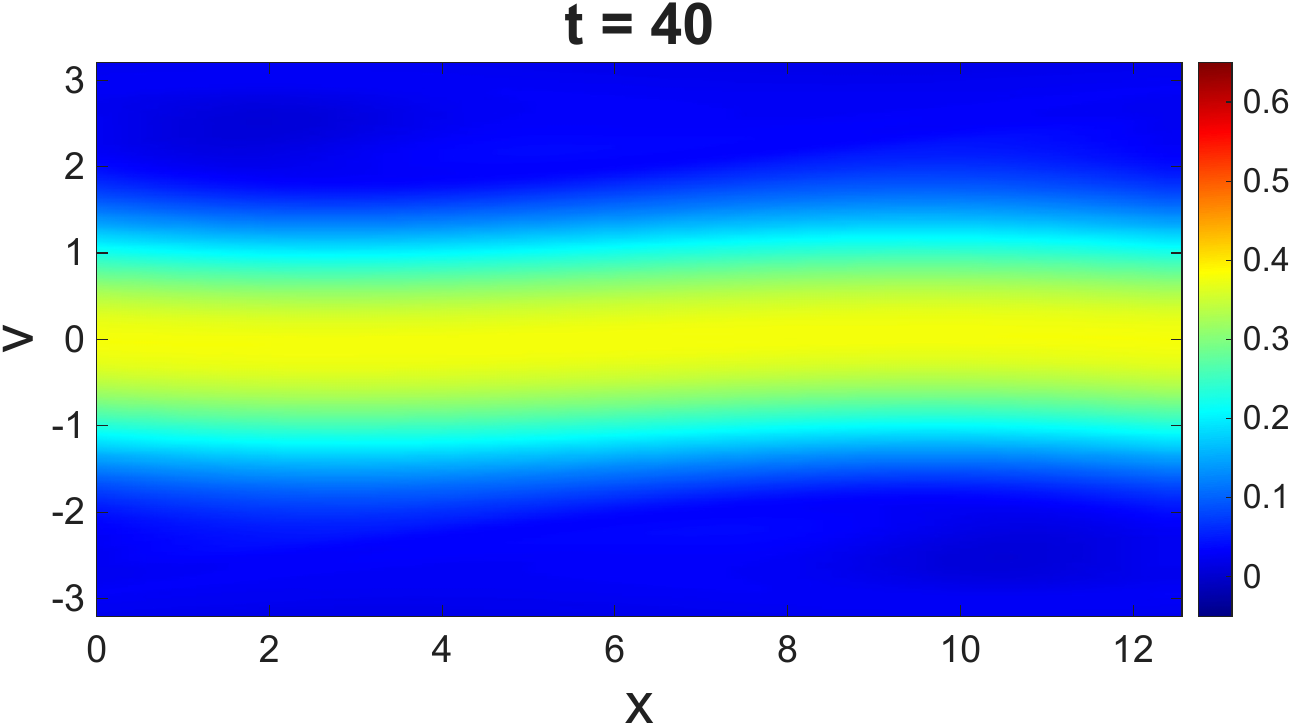}
        \subcaption{$\tau_0 = 10^3$.}
    \end{subfigure}
    \begin{subfigure}[t]{0.32\textwidth}
        \centering
        \includegraphics[width=\linewidth, height=4cm, keepaspectratio]{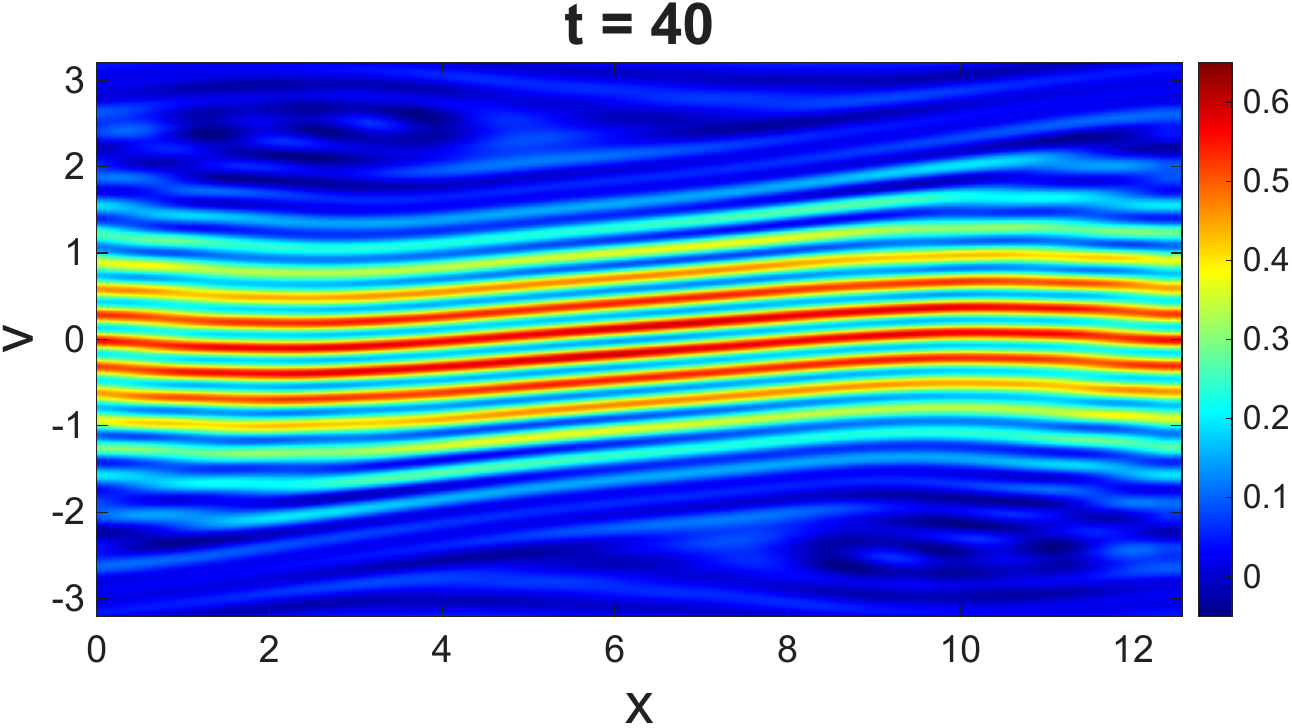}
        \subcaption{$\tau_0 = 10^5$.}
    \end{subfigure}

    \caption{Strong Landau damping: snapshots of the distribution function $f$ at $t \in \{4,16,40\}$ across the collisional regimes. $\tau_0\in\{10^1,10^3,10^5\}$, $(N_x, N_H) = (128, 640)$.}
    \label{fig:landau_compare_distribution}
\end{figure}

Figure~\ref{fig:landau_compare_energy} illustrates the time evolution of macroscopic quantities. Recall that the distance between the solution state and the equilibrium is measured by the functional $\mathcal{E}(t)$ defined in \eqref{Eh:0}, we monitor the potential energy $\xnorm{E}$ and the deviation $\norm{f-f_\infty}_{L^2(f_\infty^{-1})}$ of the distribution function $f$ from equilibrium $f_\infty$. Besides, we report the evolution of the distances to the macroscopic equilibrium $\rho_\infty$ and to the local Maxwellian $\rho\mathcal{M}$, in order to further study the underlying relaxation mechanisms. In the weakly collisional regime, none of these quantities exhibits a clear decay trend, as transport and electric effects dominate the dynamics. As the collision strength increases, dissipation increases significantly, driving the solution toward equilibrium. Moreover, the macroscopic dissipation exhibits regular oscillations. These numerical observations are consistent with the analytical results established in Section~\ref{sec:main_results}. In particular, exponential relaxation toward equilibrium is guaranteed under a $\tau_0$-dependent smallness assumption \eqref{eq:vpfp1d_hermite_semidiscrete_hypocoercive_condition} on the initial perturbation.

\begin{figure}[!htb]
    \centering
    \begin{subfigure}[t]{0.48\textwidth}
        \centering
        \includegraphics[width=\linewidth, height=4.8cm, keepaspectratio]{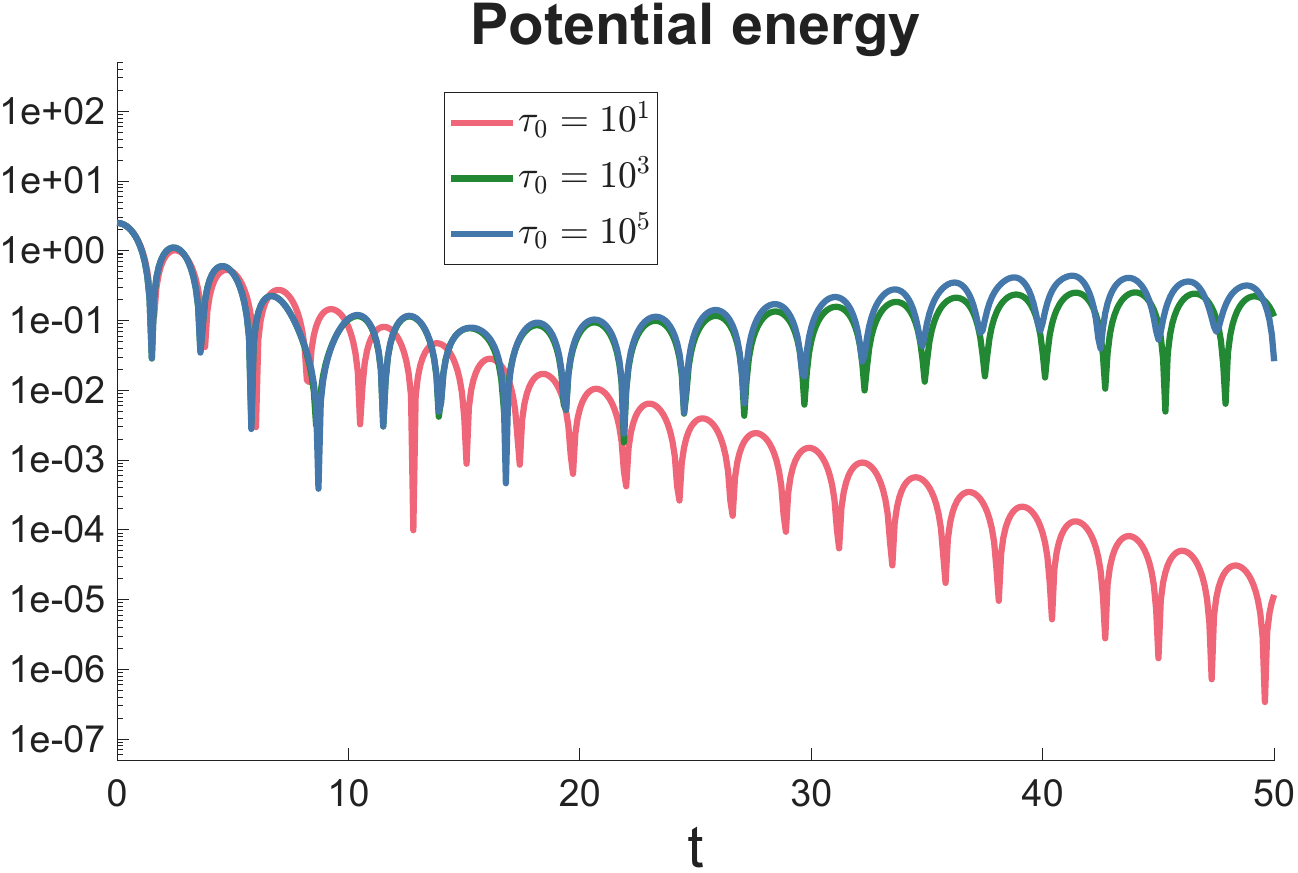}
        \subcaption{$\xnorm{E}$}
    \end{subfigure}\hfill
    \vspace{0.8em}
    \begin{subfigure}[t]{0.48\textwidth}
        \centering
        \includegraphics[width=\linewidth, height=4.8cm, keepaspectratio]{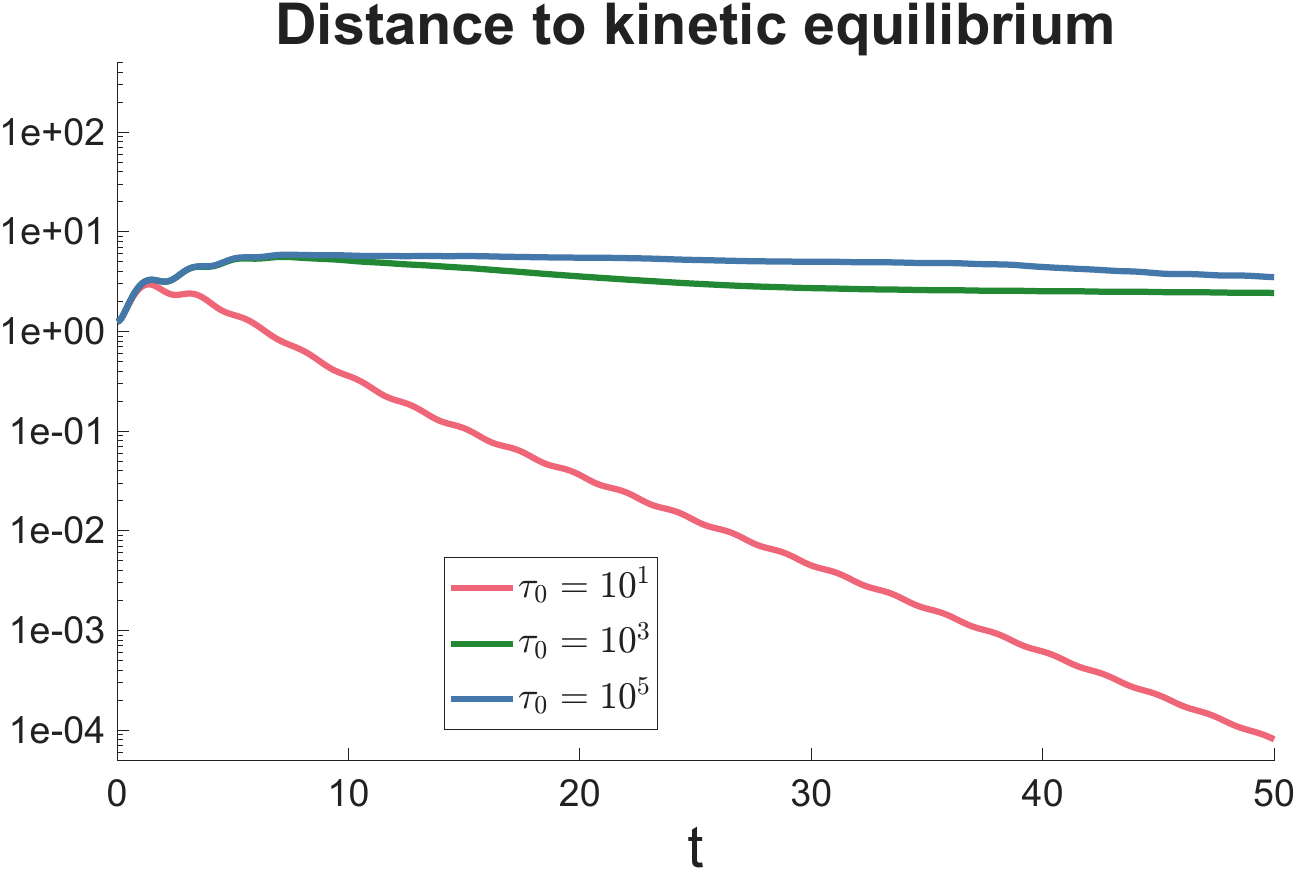}
        \subcaption{$\norm{f-f_\infty}_{L^2(f_\infty^{-1})}$}
    \end{subfigure}
    \vspace{0.8em}
    \begin{subfigure}[t]{0.48\textwidth}
        \centering
        \includegraphics[width=\linewidth, height=4.8cm, keepaspectratio]{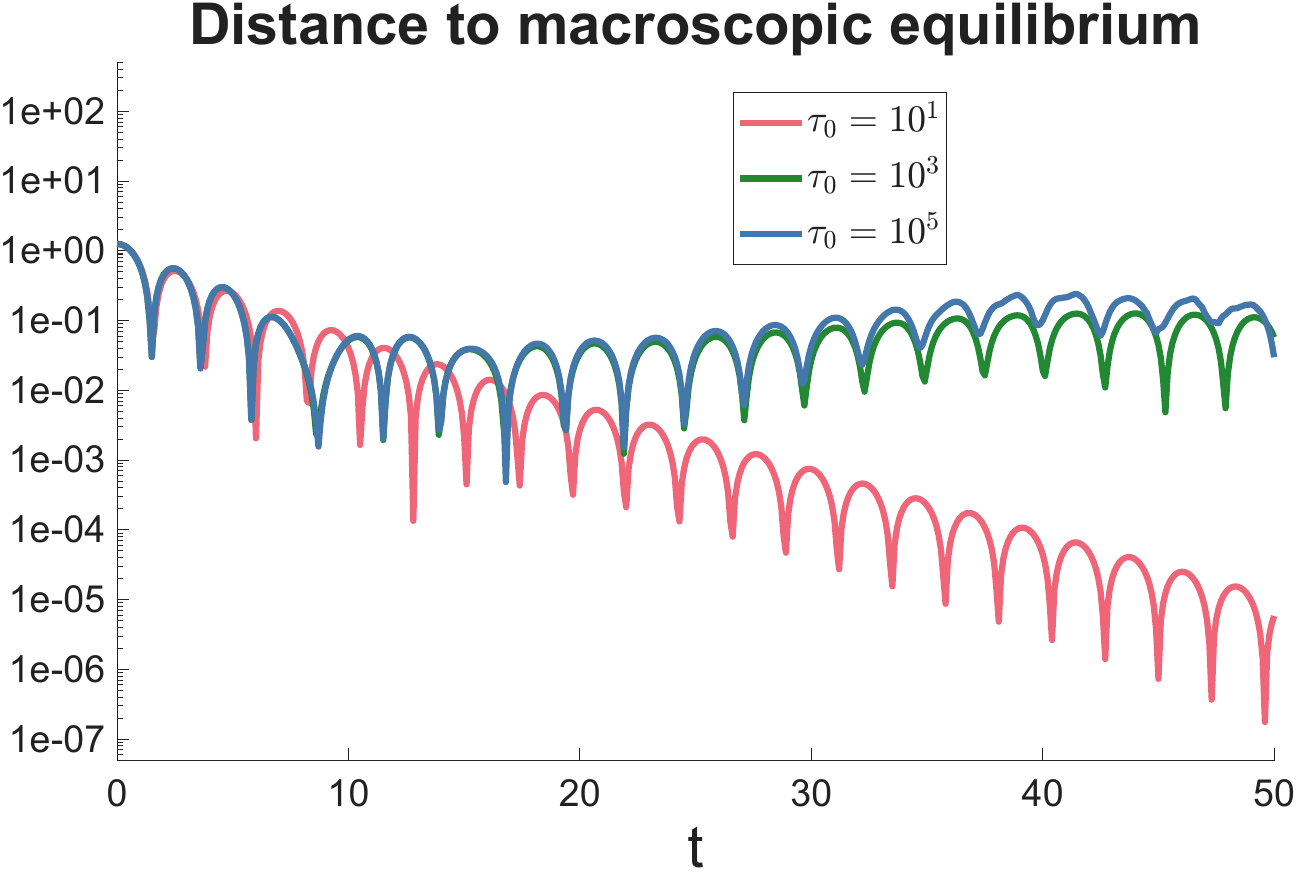}
        \subcaption{$\norm{\rho-\rho_\infty}_{L^2(\rho_\infty^{-1})}$}
    \end{subfigure}\hfill
    \begin{subfigure}[t]{0.48\textwidth}
        \centering
        \includegraphics[width=\linewidth, height=4.8cm, keepaspectratio]{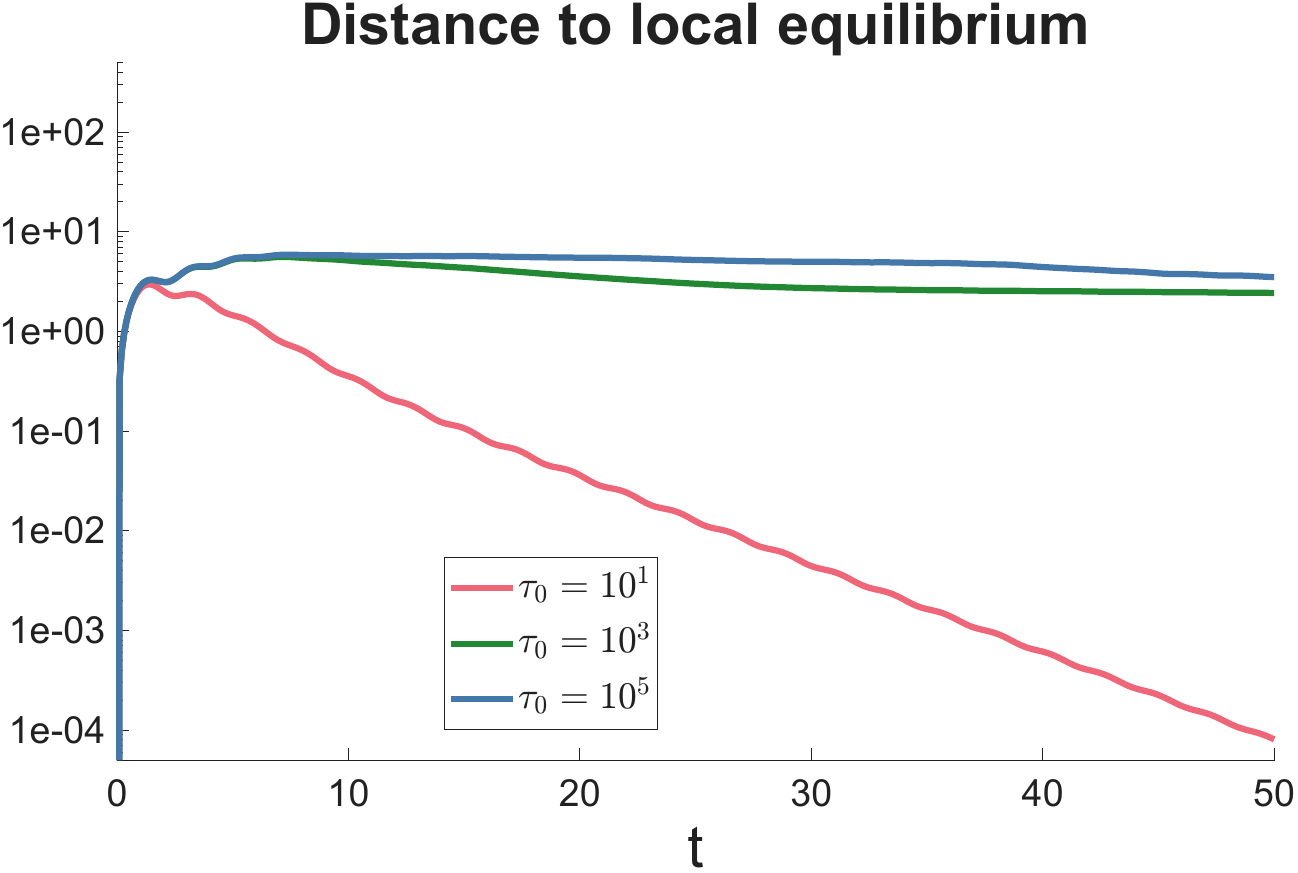}
        \subcaption{$\norm{f-\rho\mathcal{M}}_{L^2(f_\infty^{-1})}$}
    \end{subfigure}
    \caption{Strong Landau damping: time evolution in logarithmic scale of (a) the potential energy, (b) the distance to kinetic equilibrium, (c) the distance to macroscopic equilibrium, and (d) the distance to local equilibrium across the collisional regimes. $\tau_0\in\{10^1,10^3,10^5\}$, $(N_x,N_H) = (128,640)$.}
    \label{fig:landau_compare_energy}
\end{figure}

\section{Conclusion and perspectives}
\label{sec:conclusion}

In this work, we analyze a numerical scheme for the nonlinear Vlasov-Poisson-Fokker-Planck kinetic model. More precisely, we prove that our discrete solutions relax exponentially toward the thermal equilibrium under some smallness condition on the initial data. Our numerical method is based on a Hermite - discontinuous Galerkin discretization for the velocity and spatial variables respectively. The analysis relies on the structure preserving properties of the method, which allows us to establish hypocoercive estimates at the discrete level. To control the nonlinear contributions, we also prove discrete elliptic regularity along with $L^\infty$ estimates for the electric field.

Several perspectives arise from this work. A natural extension concerns the study of fully discrete schemes and the preservation of hypocoercive estimates under high-order time discretizations. Another direction consists in extending the present framework to higher-dimensional settings, where additional theoretical tools may be needed to address nonlinear field interactions. It would also be of interest to investigate discontinuous Galerkin schemes for kinetic models in the diffusive regime. Finally, the theoretical tools developed here may be adapted to study the long-time behavior of other kinetic models, such as those involving scattering or nonlinear Fokker-Planck collisions.



\section*{Acknowledgments}

Yi Cai gratefully acknowledges the financial support of the Graduate
School of Xiamen University for his visit to Toulouse. Alain Blaustein
acknowledges the support of the CDP C2EMPI, together with the French
State under the France-2030 programme, the University of Lille, the
Initiative of Excellence of the University of Lille, the European
Metropolis of Lille for their funding and support of the
R-CDP-24-004-C2EMPI project. Alain Blaustein and Francis Filbet are
supported by the ANR-25-CE40-5565 project Cookie.

\section*{Appendix A. Discrete properties }
\label{App:disc:inv}

\subsection*{Proof of Proposition \ref{prop1:disc:inv}}
To prove the conservation of mass, we take the $L^2$ product between the first line in \eqref{eq:vpfp1d_hermite_semidiscrete_riesz} and the test function $u_0\,=\,1\in U^m_h$. Using the duality relation \eqref{eq:ops_ah_discrete_duality} and the kernel property \eqref{eq:ops_ah_discrete_kernel}, we obtain the result
\[
\frac{\dd{m_0}}{\dd{t}}\,=\,
\frac{\dd{}}{\dd{t}}
\xhdual{D_{h,0}}{1}\,=\, 0\,.
\]
To prove the dissipation of the total momentum, we take the $L^2$ product between the second line in \eqref{eq:vpfp1d_hermite_semidiscrete_riesz} and the test function $u_1\,=\,1\in U^m_h$. Using the duality relation \eqref{eq:ops_ah_discrete_duality} and the kernel property \eqref{eq:ops_ah_discrete_kernel} and since we also have $\mathcal{A}_h^* u_1\,=\,0$, we obtain
\[
\frac{\dd{m_1}}{\dd{t}}\,=\,
\sqrt{T_0}\,\frac{\dd{}}{\dd{t}}\xhdual{D_{h,1}}{1}\,=\, 
\xhdual{\Pi_h\left( E_h(D_{h,0}-D_{\infty,0})\right)}{1} \,-\, \frac{1}{\tau_0} m_1\,.
\]
Since $u_1\,=\,1\in U^m_h$, we remove $\Pi_h$ in the previous right hand side and then we substitute $D_{h,0}-D_{\infty,0}$ according to the second equation in fourth line of \eqref{eq:vpfp1d_hermite_semidiscrete_riesz}
\[
\frac{\dd{m_1}}{\dd{t}}\,=\, 
-\,
\xhdual{E_h}{\mathcal{B}_h^* E_h} \,-\, \frac{1}{\tau_0} m_1
\,.
\]
When $\mathcal{B}_h^*$ corresponds to a discontinuous Galerkin approximation of $-\partial_x$, we rewrite the right hand side according to \eqref{eq:ops_bh_ldg_global} with $(E,v)\,=\,(E_h,E_h)$, leading to
\[
\xhdual{E_h}{\mathcal{B}_h^* E_h}
\,=\, 
\sum_{j\in\mcJ}\hat{g}_{b,j-\half}^*(E_h)\jmp{E_h}_{j-\half} + \sum_{j\in\mcJ}\xKdual[K_j]{E_h}{\partial_x E_h}\,.
\]
We integrate by part in the second sum on the right hand side
and re-index the subsequent boundary terms which yields
\begin{align*}
\xhdual{E_h}{\mathcal{B}_h^* E_h}
\,=\, 
&
\sum_{j\in\mcJ}
\hat{g}_{b,j-\half}^*(E_h)
\jmp{E_h}_{j-\half}
+ \frac{1}{2}\left( 
|E_{j-\half}^-|^2
-
|E_{j-\half}^+|^2\right)\,,
\end{align*}
hence we rewrite the last term according to the identity 
$$
|E_{j-\half}^-|^2
-
|E_{j-\half}^+|^2
=
-
(
\hat{g}_{b,j-\half}(E_h)
+
\hat{g}_{b,j-\half}^*(E_h))
\jmp{E_h}_{j-\half},
$$
and get that
\begin{align*}
\xhdual{E_h}{\mathcal{B}_h^* E_h}\,=\, 
&
\frac{1}{2}\sum_{j\in\mcJ}(
\hat{g}_{b,j-\half}^*
-
\hat{g}_{b,j-\half})(E_h)
\jmp{E_h}_{j-\half}\,.
\end{align*}
Choosing  $\hat{g}_{b}(E_h)= E_h^-$, we finally have
$$
\xhdual{E_h}{\mathcal{B}_h^* E_h}\,=\, \frac{1}{2}\;\sum_{j\in\mcJ}
\jmp{E_h}_{j-\half}^2 \geq 0,
$$
leading to
$$
\frac{\dd{m_1}}{\dd{t}}\,\leq\, \,-\, \frac{1}{\tau_0} m_1
\,.
$$

To prove the dissipation of energy, we take the $L^2$ product between the third line in \eqref{eq:vpfp1d_hermite_semidiscrete_riesz} and the test function $u_2\,=\,1\in U^m_h$, we obtain
\begin{equation}\label{ener:proof:interm}
\frac{\dd{}}{\dd{t}}
\xhdual{D_{h,2}}{1}\,=\, 
\sqrt{\frac{2}{T_0}}
\xhdual{E_h}{D_{h,1}} - \frac{2}{\tau_0} \xhdual{D_{h,2}}{1}
\,.
\end{equation}
Then, we substitute $E_h$ acording to the first equation in the fourth line in \eqref{eq:vpfp1d_hermite_semidiscrete_riesz} and use that, in this case, it holds $\sqrt{T}_0\,\mathcal{B}_h\,=\, \mathcal{A}_h$ to get
\[
\sqrt{\frac{2}{T_0}}
\xhdual{E_h}{D_{h,1}} 
\,=\, 
-\frac{\sqrt{2}}{T_0}\,
\xhdual{\mathcal{A}_h \Phi_h}{D_{h,1}}
\,.
\]
Then, we reformulate the right hand side following the same lines as in the proof of Proposition \ref{prop:4:2} to compute $\mathcal{D}_h$, which yields
\[
 \sqrt{\frac{2}{T_0}}
\xhdual{E_h}{D_{h,1}} 
\,=\, 
-\frac{1}{\sqrt{2}\,T_0}\,
\frac{\dd{}}{\dd{t}}\|E_h\|_{L^2(\mathbb{T})}^2
\,.
\]
Using the definition of the kinetic energy $K$ and using \eqref{ener:proof:interm}, we get the expected result.

\subsection*{Proof of Proposition \ref{prop2:disc:inv}}
We have already proven the conservation of mass  and  the dissipation of momentum in the previous section, that is,
\[
\frac{\dd{m_1}}{\dd{t}}\,=\, 
-\,
\xhdual{E_h}{\mathcal{B}_h^* E_h} \,-\, \frac{1}{\tau_0} m_1
\,.
\]
In the Raviart-Thomas case \eqref{eq:ops_bh_rt}, $E_h\in W_h= U_h^{m+1} \cap C^0(\mbT)$ has a weak derivative in $L^2$ which satisfies $\partial_x E_h \in V_h$ (the zero mean condition follows form the periodicity of $E_h$). Furthermore, according to \eqref{eq:ops_bh_rt} it holds for all $v\in V_h$
\[
-\xhdual{\partial_x E_h}{v}\,=\,
\xhdual{\mathcal{B}_h^* E_h}{v}\,,
\]
which justifies that $\mathcal{B}_h^* E_h\,=\, -\partial_x E_h$. Therefore, we deduce that $\xhdual{E_h}{\partial_x E_h}=0$ and
\[
\frac{\dd{m_1}}{\dd{t}}\,=\, \,-\, \frac{1}{\tau_0} m_1\,.
\]

\bibliographystyle{amsplain}
\bibliography{P2_VPFP_DGHS}

\end{document}